\numberwithin{equation}{section}
\theoremstyle{definition}
\newtheorem{defi}{Definition}[section]
\newtheorem{Eg}[defi]{Example}
\newtheorem{Egs}[defi]{Examples}
\newtheorem{Rq}[defi]{Remark}
\newtheorem{Not}[defi]{Notation}
\providecommand{\customgenericname}{}
\newcommand{\newcustomtheorem}[2]{%
	\newenvironment{#1}[1]
	{%
		\renewcommand\customgenericname{#2}%
		\renewcommand\theinnercustomgeneric{##1}%
		\innercustomgeneric
	}
	{\endinnercustomgeneric}
}
\theoremstyle{plain}
\newtheorem{Prop}[defi]{Proposition}
\newtheorem{Lemme}[defi]{Lemma}
\newtheorem{Cor}[defi]{Corollary}
\newtheorem{thm}[defi]{Theorem}
\newtheorem*{thm*}{Theorem}
\newtheorem*{Lemme*}{Lemma}
\newtheorem*{Prop*}{Proposition}
\providecommand{\customgenericname}{}
\newcommand{\newcustomtheoremplain}[2]{%
	\newenvironment{#1}[1]
	{%
		\renewcommand\customgenericname{#2}%
		\renewcommand\theinnercustomgenerictwo{##1}%
		\innercustomgenerictwo
	}
	{\endinnercustomgeneric}
}
\newcommand*{\the@mz}{mz\the\c@@mz}
\newcommand*{\@mz@list}{}
\let\@mz@do\relax
\newcommand*{\mzreset}{%
	\begingroup
	\def\@mz@do##1{%
		\global\expandafter\let\csname mz@##1\endcsname\relax
	}%
	\@mz@list
	\global\let\@mz@list\@empty
	\endgroup
}
\newcommand*{\mzleft}[3]{%
	\@ifundefined{mz@#1}{%
		\global\advance\c@@mz\@ne
		\expandafter\xdef\csname mz@#1\endcsname{\the@mz}%
		\xdef\@mz@list{\@mz@list\@mz@do{#1}}%
	}{}%
	\expandafter\let\expandafter\@mz\csname mz@#1\endcsname
	\mleft#2%
	\expandafter\mathpalette\expandafter{%
		\expandafter\@mzleft\expandafter{\@mz}%
	}{#3}%
	\mright.\kern-\nulldelimiterspace
}
\newcommand*{\mzright}[3]{%
	\kern-\nulldelimiterspace
	\@ifundefined{mz@#1}{%
		\@latex@warning{Missing \string\mzleft{#1}}%
		\mleft.#2\mright#3%
	}{%
		\expandafter\let\expandafter\@mz\csname mz@#1\endcsname
		\mleft.%
		\expandafter\mathpalette\expandafter{%
			\expandafter\@mzright\expandafter{\@mz}%
		}{#2}%
		\mright#3%
	}%
}
\newcommand*{\@mzleft}{%
	\@mzleftright lr%
}
\newcommand*{\@mzright}{%
	\@mzleftright rl%
}
\newcommand*{\@mzleftright}[5]{%
	\sbox0{$\m@th#4{}#5{}$}%
	\ifmeasuring@
	\else
	\begingroup
	\let\@auxout\@mainaux
	\zref@labelbyprops{#3#1}{mzheight,mzdepth}%
	\endgroup
	\fi
	\zifrefundefined{\@mz #2}{%
	}{%
		\dimen@=\zref@extract{#3#2}{mzheight}\relax
		\ifdim\dimen@>\ht0 %
		\ht0=\dimen@
		\fi
		\dimen@=\zref@extract{#3#2}{mzdepth}\relax
		\ifdim\dimen@>\dp0 %
		\dp0=\dimen@
		\fi
	}%
	\copy0\relax
}
\newcommand{\IEM}[2]{\llbracket #1,#2 \rrbracket}
\newcommand{\e}{\varepsilon}
\newcommand{\K}{\mathbb{K}}
\newcommand{\N}{\mathbb{N}}
\newcommand{\ssi}{if and only if}
\newcommand{\Y}{\raisebox{-0.25\height}{\begin{tikzpicture}[line cap=round,line join=round,>=triangle 45,x=0.2cm,y=0.2cm]
	\clip(-1.2,0.) rectangle (1.2,2.);
	\draw [line width=.5pt] (0.,0.)-- (0.,1.);
	\draw [line width=.5pt] (0.,1.)-- (-1.,2.);
	\draw [line width=.5pt] (0.,1.)-- (1.,2.);
	\end{tikzpicture}}}
\newcommand{\Yd}{\raisebox{-0.2\height}{\begin{tikzpicture}[line cap=round,line join=round,>=triangle 45,x=0.2cm,y=0.2cm]
	\draw [line width=.5pt] (0.,0.)-- (0.,1.);
	\draw [line width=.5pt] (0.,1.)-- (-1.,2.);
	\draw [line width=.5pt] (0.,1.)-- (1.,2.);
	\draw (1.,1.7) node[above]{\footnotesize{$d$}};
	\end{tikzpicture}}}
\newcommand{\Ydec}[1]{\raisebox{-0.3\height}{\begin{tikzpicture}[line cap=round,line join=round,>=triangle 45,x=0.25cm,y=0.25cm]
			\draw [line width=.5pt] (0.,0.)-- (0.,1.);
			\draw [line width=.5pt] (0.,1.)-- (-1.,2.);
			\draw [line width=.5pt] (0.,1.)-- (1.,2.);
			\draw (0,1) node[left]{\scriptsize{$#1$}};
\end{tikzpicture}}}
\newcommand{\YY}{\raisebox{-0.3\height}{\begin{tikzpicture}[line cap=round,line join=round,>=triangle 45,x=0.2cm,y=0.2cm]
			\draw (0,0)--(0,1);
			\draw (0,1)--(-2,3);
			\draw (-1.25,2.25)--(-0.5,3);
			\draw (1.25,2.25)--(0.5,3);
			\draw (0,1)--(2,3);
\end{tikzpicture}}}
\newcommand{\peignedroit}[3]{\raisebox{-0.5\height}{\begin{tikzpicture}[line cap=round,line join=round,>=triangle 45,x=0.25cm,y=0.25cm]
	\draw (0,0)--(0,1);
	\draw (0,1)--(-1,2) node[left]{$F_{#1}$};
	\draw (0,1)--(6,7);
	\draw (2,3)--(1,4) node[left]{$F_{#2}$};
	\draw (3.,4.5) node[rotate=45]{$\cdots$};
	\draw (5,6)--(4,7) node[left]{$F_{#3}$};
	\end{tikzpicture} }}
\newcommand{\peignegauche}[3]{\raisebox{-0.5\height}{\begin{tikzpicture}[line cap=round,line join=round,>=triangle 45,x=0.25cm,y=0.25cm]
	\draw (0,0)--(0,1);
	\draw (0,1)--(1,2) node[right]{$F_{#1}$};
	\draw (0,1)--(-6,7);
	\draw (-2,3)--(-1,4) node[right]{$F_{#2}$};
	\draw (-3.,4.5) node[rotate=135]{$\cdots$};
	\draw (-5,6)--(-4,7) node[right]{$F_{#3}$};
	\end{tikzpicture}}}
\newcommand{\balais}{\raisebox{-0.3\height}{\begin{tikzpicture}[line cap=round,line join=round,>=triangle 45,x=0.2cm,y=0.2cm]
	\draw (0,0)--(0,2);
	\draw (0,1)-- (-1,2);
	\draw (0,1)-- (1,2);
	\end{tikzpicture}}}
\newcommand{\balaisg}{\raisebox{-0.3\height}{\begin{tikzpicture}[line cap=round,line join=round,>=triangle 45,x=0.2cm,y=0.2cm]
	\draw (0,0)-- (0,1);
	\draw (0,1)-- (-1,2);
	\draw (-0.5,1.5)-- (0,2);
	\draw (0,1)-- (1,2);
	\end{tikzpicture}}}
\newcommand{\balaisd}{\raisebox{-0.3\height}{\begin{tikzpicture}[line cap=round,line join=round,>=triangle 45,x=0.2cm,y=0.2cm]
	\draw (0,0)-- (0,1);
	\draw (0,1)-- (-1,2);
	\draw (0.5,1.5)-- (0,2);
	\draw (0,1)-- (1,2);
	\end{tikzpicture}}}
\newcommand{\mathleft}{\@fleqntrue\@mathmargin0pt}
\newcommand{\mathcenter}{\@fleqnfalse}
\DeclareMathOperator{\op}{op}
\DeclareMathOperator{\Prim}{Prim}
\DeclareMathOperator{\batc}{QSh}
\DeclareMathOperator{\Light}{Light}
\DeclareMathOperator{\Tridend}{Tridend}
\DeclareMathOperator{\TSym}{TSym}
\DeclareMathOperator{\PBT}{PBT}
\DeclareMathOperator{\nf}{nl}
\DeclareMathOperator{\coT}{coT}
\DeclareMathOperator{\Dend}{Dend}
\begin{document}
\allowdisplaybreaks

\newcommand{\arXivNumber}{2207.03839}

\renewcommand{\PaperNumber}{066}

\FirstPageHeading

\ShortArticleName{Tridendriform Structures}
	
\ArticleName{Tridendriform Structures}
	
\Author{Pierre CATOIRE}
	
\AuthorNameForHeading{P.~Catoire}
	
\Address{Universit\'e du Littoral C\^ote d'Opale, UR 2597 LMPA, Laboratoire de Math\'ematiques Pures\\ et Appliqu\'ees Joseph Liouville, F-62100 Calais, France}
\Email{\href{mailto:pierre.catoire@univ-littoral.fr}{pierre.catoire@univ-littoral.fr}}
		
\ArticleDates{Received November 10, 2022, in final form August 31, 2023; Published online September 15, 2023}
					
\Abstract{Inspired by the work of J-L.~Loday and M.~Ronco, we build free tridendriform algebras over reduced trees and we show that they have a coproduct satisfying some compatibilities with the tridendriform products. Its graded dual is the opposite bialgebra of TSym introduced by N.~Bergeron et al., which is described by the lightening splitting of a tree. In particular, we can split the product in three pieces and the coproduct in two pieces with Hopf compatibilities. We generate its codendriform primitives and count its coassociative primitives thanks to L.~Foissy's work.}
		
\Keywords{Hopf algebras; tridendriform; dendriform; Schr\"oder trees}
		
\Classification{16S10; 16T05; 16T10; 16W50; 17A30}

\section{Introduction}
In particular cases, the associative algebra structure on a space $A$ may be extended to a richer one. For example, in~\cite{bidend} and~\cite{euleridem}, the authors consider objects called \emph{dendriform algebras}. In~\cite{Scind}, they consider dendriform algebras and dipterous algebras. Studying such properties over the product of a bialgebra can give us information about its coproduct, if some convenient compatibilities are satisfied. We can also consider the dendriform or tridendriform cases (commutative or not) detailed in~\cite{Quasi-shuffle} under the terminology of shuffle and quasi-shuffle algebras.

 To be more precise, an algebra is dendriform if its product can be ``broken'' in two pieces denoted $\prec$, $\succ$ called respectively ``left product'' and ``right product'' such that these products verify, for all $a,b,c\in A$
\begin{gather*}
	(a\prec b)\prec c= a\prec (b*c), \\
	(a\succ b)\prec c= a\succ (b\prec c), \\
	a\succ (b\succ c)= (a*b)\succ c.
\end{gather*}
In particular, this implies that $*\coloneqq\prec+\succ$ is associative. The article~\cite{bidend} adds the (dual) notion of dendriform coalgebra and describe dendriform bialgebras.
In this document, we will bring our attention to the case \emph{tridendriform} detailed in~\cite{Trial} and~\cite{Tridend} with $\lambda=1$. This means that our product $*$ is split in three pieces denoted $\prec$, $\cdot$, $\succ$, respectively called ``left product'', ``middle product'' and ``right product'' satisfying the properties of Definition~\ref{def:tridend}.
We define the associative product of a tridendriform algebra as $*\coloneqq\prec+\cdot+\succ$. To study a tridendriform bialgebra $H$, it is convenient to define a tridendriform structure over $H\otimes H$.
 In order to talk about coassociativity later, we remind that $H\otimes(H\otimes H)$ and $(H\otimes H)\otimes H$ have the same tridendriform structure
\begin{customlemma}{\ref{lem:tritensass}}
	For any tridendriform algebras $A$, $B$ and $C$, we have $A\otimes(B\otimes C)=(A\otimes B)\otimes C$ as tridendriform algebras.
\end{customlemma}
To build tridendriform bialgebras, we will need to add units. The unique way to add units is described in Section~\ref{sec:Aug}, with an extension $\overline{\otimes}$ of the tensor product. We get

\begin{customlemma}{\ref{lem:tensassoaugm}}
	For $\overline{A}$, $\overline{B}$ and $\overline{C}$ three augmented tridendriform algebras, we have
	\[
	A\,\overline{\otimes}\, (B\,\overline{\otimes} \,C)=(A\, \overline{\otimes}\,B)\, \overline{\otimes}\,C \text{ as tridendriform algebras.}
	\]
\end{customlemma}

After those reminders about the tensor products of tridendriform algebras and augmented tridendriform algebras, we will study the combinatorics of these algebras.
For this we will use~\cite{Trial} where the free tridendriform algebra with one generator is described with reduced trees, i.e., trees where each node has at least $2$ sons (also known as Schr\"oder trees). This algebra is denoted by $\mathcal{A}$. In this paper, we will get a new non-inductive formula for the tridendriform operations. Here are some examples of computations:
\begin{alignat*}{3}
	&\Y \prec \balais=\raisebox{-0.5\height}{\begin{tikzpicture}[line cap=round,line join=round,>=triangle 45,x=0.2cm,y=0.2cm]
		\draw (0,0)--(0,1);
		\draw (0,1)--(-2,3);
		\draw (0,1)--(2,3);
		\draw (1,2)--(1,3);
		\draw (1,2)--(0,3);
	\end{tikzpicture}}, \qquad &&
\balaisd \prec \balaisg =\raisebox{-0.5\height}{\begin{tikzpicture}[line cap=round,line join=round,>=triangle 45,x=0.15cm,y=0.15cm]
		\draw (0,0)--(0,1);
		\draw (0,1)--(-3,4);
		\draw (0,1)--(3,4);
		\draw (1,2)--(-1,4);
		\draw (1.66,2.66)--(0.33,4);
		\draw (1,3.33)--(1.66,4);
\end{tikzpicture}}+
\raisebox{-0.5\height}{\begin{tikzpicture}[line cap=round,line join=round,>=triangle 45,x=0.15cm,y=0.15cm]
		\draw (0,0)--(0,1);
		\draw (0,1)--(-3,4);
		\draw (0,1)--(3,4);
		\draw (1,2)--(-1,4);
		\draw (1,2)--(1,3);
		\draw (1,3)--(2,4);
		\draw (1,3)--(0,4);
\end{tikzpicture}}
+\raisebox{-0.5\height}{\begin{tikzpicture}[line cap=round,line join=round,>=triangle 45,x=0.15cm,y=0.15cm]
		\draw (0,0)--(0,1);
		\draw (0,1)--(-3,4);
		\draw (0,1)--(3,4);
		\draw (0.66,1.66)--(-1.66,4);
		\draw (-0.41,2.81)--(0.81,4);
		\draw (0.1,3.31)--(-0.61,4);
\end{tikzpicture}}
+\raisebox{-0.5\height}{\begin{tikzpicture}[line cap=round,line join=round,>=triangle 45,x=0.15cm,y=0.15cm]
		\draw (0,0)--(0,1);
		\draw (0,1)--(-3,4);
		\draw (0,1)--(3,4);
		\draw (1,2)--(-1,4);
		\draw (0,3)--(0,4);
		\draw (0,3)--(1,4);
\end{tikzpicture}}
+\raisebox{-0.5\height}{\begin{tikzpicture}[line cap=round,line join=round,>=triangle 45,x=0.15cm,y=0.15cm]
		\draw (0,0)--(0,1);
		\draw (0,1)--(-3,4);
		\draw (0,1)--(3,4);
		\draw (1,2)--(-1,4);
		\draw (0.33,2.66) -- (1.66,4);
		\draw (-0.33,3.33) -- (0.33,4);
\end{tikzpicture}},
&\\ &\Y \cdot \balaisg=\raisebox{-0.5\height}{\begin{tikzpicture}[line cap=round,line join=round,>=triangle 45,x=0.2cm,y=0.2cm]
		\draw (0,0)--(0,2);
		\draw (0,2)--(-1,3);
		\draw (0,2)--(1,3);
		\draw (0,1)--(-2 ,3);
		\draw (0,1)--(2,3);
\end{tikzpicture}}, \qquad&&
 \balaisd \cdot \balaisg= \raisebox{-0.5\height}{\begin{tikzpicture}[line cap=round,line join=round,>=triangle 45,x=0.2cm,y=0.2cm]
\draw (0,0)--(0,2);
\draw (0,1)--(-2,3);
\draw (0,1)--(2,3);
\draw (0,2)--(-1,3);
\draw (0.5,2.5)--(0,3);
\draw (0,2)--(1,3);
\end{tikzpicture}}+\raisebox{-0.5\height}{\begin{tikzpicture}[line cap=round,line join=round,>=triangle 45,x=0.2cm,y=0.2cm]
\draw (0,0)--(0,2);
\draw (0,1)--(-2,3);
\draw (0,1)--(2,3);
\draw (0,2)--(-1,3);
\draw (0,2)--(0,3);
\draw (0,2)--(1,3);
\end{tikzpicture}}+\raisebox{-0.5\height}{\begin{tikzpicture}[line cap=round,line join=round,>=triangle 45,x=0.2cm,y=0.2cm]
\draw (0,0)--(0,2);
\draw (0,1)--(-2,3);
\draw (0,1)--(2,3);
\draw (0,2)--(-1,3);
\draw (-0.5,2.5)--(0,3);
\draw (0,2)--(1,3);
\end{tikzpicture}},
 &\\
 &\Y\succ \balaisd=\raisebox{-0.5\height}{\begin{tikzpicture}[line cap=round,line join=round,>=triangle 45,x=0.2cm,y=0.2cm]
 		\draw (0,0)--(0,1);
 		\draw (0,1)--(-2,3);
 		\draw (0,1)--(2,3);
 		\draw (1.25,2.25)--(0.5,3);
 		\draw (-1.25,2.25)--(-0.5,3);
 \end{tikzpicture}},\qquad&&
\balaisg \succ \balaisg=\raisebox{-0.5\height}{\begin{tikzpicture}[line cap=round,line join=round,>=triangle 45,x=0.2cm,y=0.2cm]
		\draw (0,0)--(0,1);
		\draw (0,1)--(-2,3);
		\draw (0,1)--(2,3);
		\draw (-0.66,1.66)--(0.66,3);
		\draw (-1.33,2.33)--(-0.66,3);
\end{tikzpicture}}+
\raisebox{-0.5\height}{\begin{tikzpicture}[line cap=round,line join=round,>=triangle 45,x=0.2cm,y=0.2cm]
		\draw (0,0)--(0,1);
		\draw (0,1)--(-2,3);
		\draw (0,1)--(2,3);
		\draw (1.25,2.25)--(0.5,3);
		\draw (-1.25,2.25)--(-0.5,3);
\end{tikzpicture}}+
\raisebox{-0.5\height}{\begin{tikzpicture}[line cap=round,line join=round,>=triangle 45,x=0.2cm,y=0.2cm]
		\draw (0,0)--(0,3);
		\draw (0,1)--(-2,3);
		\draw (0,1)--(2,3);
		\draw (1.25,2.25)--(0.5,3);
\end{tikzpicture}}.&
\end{alignat*}
We will see in Section~\ref{sec4} that these products are quasi-shuffles of the branches of the right comb representation of the left term with the branches of the left comb representation of the right term. Looking at $t$ as a right comb and $s$ as a left comb means we consider them as
		\begin{align*}
			&t=\peignedroit{1}{2}{k}
			\qquad \text{and} \qquad s=\peignegauche{k+1}{k+2}{k+l}
		\end{align*}
\begin{customdefi}{\ref{def:quasiaction}}
	Let $t$ and $s$ be two trees which respective right comb and left comb representations are given above.
	Let $k$, $l$ be the numbers of forests in the comb representation of~$t$ and~$s$. Let $\sigma$ be a $(k,l)$-quasi-shuffle which has for image $\IEM{1}{n}$. We denote $\sigma(t,s)$ the tree obtained the following way:
	\begin{enumerate}\itemsep=0pt
		\item We first consider the ladder with $n$ nodes
		\[
			\raisebox{-0.5\height}{\begin{tikzpicture}[line cap=round,line join=round,>=triangle 45,x=0.3cm,y=0.3cm]
					\begin{scope}
						\draw (0,0)--(0,2.5);
						\draw[dashed] (0,2.5)--(0,5);
						\draw (0,5)--(0,6);
						\filldraw [black] (0,1) circle (2pt) node[anchor=west]{\small{Node $1$}};
						\filldraw [black] (0,2) circle (2pt) node[anchor=west]{\small{Node $2$}};
						\filldraw [black] (0,5) circle (2pt) node[anchor=west]{\small{Node $n$}};
					\end{scope}
			\end{tikzpicture}}
		\]
		\item For all $i\in\IEM{1}{k},$ we graft $F_i$ as the \emph{left} son at the node $\sigma(i)$.
		\item For all $i\in\IEM{k+1}{k+l},$ we graft $F_i$ as \emph{right} son at the node $\sigma(i)$.
	\end{enumerate}
\end{customdefi}
\begin{customEg}{\ref{Eg:quasiaction}}
	Consider $\sigma=(1,3,2,3)$ a $(2,2)$-quasi-shuffle.

 Let us take
	$t=$	\raisebox{-0.35\height}{\begin{tikzpicture}[line cap=round,line join=round,>=triangle 45,x=0.2cm,y=0.2cm]
			\draw (0,0)--(0,1);
			\draw (0,1)--(-1,2) node[left]{$F_1$};
			\draw (0,1)--(2,3);
			\draw (1,2)--(0,3) node[left,above]{$F_2$};
	\end{tikzpicture}} and $s=$\nolinebreak \raisebox{-0.35\height}{\begin{tikzpicture}[line cap=round,line join=round,>=triangle 45,x=0.2cm,y=0.2cm]
			\draw (0,0)--(0,1);
			\draw (0,1)--(1,2) node[right]{$F_{3}$};
			\draw (0,1)--(-2,3);
			\draw (-1,2)--(0,3) node[right,above]{$F_{4}$};
	\end{tikzpicture}}.
	Then
	\[
	\sigma(t,s)=\text{\raisebox{-0.5\height}{\begin{tikzpicture}[line cap=round,line join=round,>=triangle 45,x=0.3cm,y=0.3cm]
			\draw (0,0)--(0,4);
			\draw (0,1)--(-1,2) node[left]{$F_1$};
			\draw (0,2)--(1,3) node[right]{$F_3$};
			\draw (0,3)--(-1,4) node[left]{$F_2$};
			\draw (0,3)--(1,4) node[right]{$F_{4_{\vphantom{\big|}}}$};
	\end{tikzpicture} }}.
	\]
\end{customEg}

\begin{customthm}{\ref{produit}}
	Let $t$, $s$ be two trees different from $|$. Then
	\[
	t*s=\sum_{\sigma\in \batc(k,l)}\sigma(t,s).
	\]
\end{customthm}
We also get a similar description for $\prec$, $\cdot$, $\succ$ products. For more details, see Corollary~\ref{Cor:3prodescription}. Then considering the number of leaves minus one as a gradation, we obtain
\begin{customthm}{\ref{coproduit}}
	The unique coproduct $\Delta$ over $\mathcal{A}$ compatible with the tridendriform structure and making $(\mathcal{A},*,|,\Delta,\varepsilon)$ a connected graded bialgebra is given by the following formula for all tree~$t$
	\begin{gather*}
		\Delta(t)=\sum_{c \text{ \normalfont{admissible cut of} }t} G^c(t)\otimes R^c(t), \qquad
		\Delta(|)=|\otimes |,
	\end{gather*}
	where $R^c(t)$ is the component which has the root of $t$ and $G^c(t)=G^c_1(t)*\cdots *G^c_k(t),$ where $G^c_i(t)$ are naturally ordered.
	If $c$ is the empty cut, we define $R^c(t):=t$ and $G^c(t):=|.$ If $c$ is the total cut, we define $R^c(t):=|$ and $G^c(t)=t.$	
\end{customthm}

We shall use the notation $(n,m)$-dendriform bialgebras for $n$, $m$ two non-negative integers, to talk about bialgebras where the product can be split in $n$ other operations and the coproduct can be broken in $m$ other operations, satisfying some Hopf compatibilities. Here is a table of known $(n,m)$-dendriform algebras, where $n$ is the line number and $m$ the column's one,\looseness=1
\begin{center}
	{\footnotesize\renewcommand{\arraystretch}{1.2}
		\begin{tabular}{|c|m{2.3cm}|m{2.3cm}|m{2.0cm}|m{2.0cm}|m{2.0cm}|}
			\hline
			$(n,m)$ & 0 & 1 & 2 & 3 & 4 \\
			\hline
			0 & $\K$-vector space & Associative \newline algebra & Dendriform\newline algebra & Tridendriform\newline algebra & Quadriform\newline algebra \\
			\hline
			1 & Coalgebra & Bialgebra & Dendriform\newline bialgebra & Tridendriform\newline bialgebra & Quadriform\newline bialgebra \\
			\hline
			2 & Codendriform\newline coalgebra & Codendriform\newline bialgebra & Bidendriform\newline bialgebra & $(3,2)$-\newline dendriform\newline bialgebra & ? \\
			\hline
			3 & Cotridendriform\newline coalgebra & Cotridendriform\newline bialgebra & $(2,3)$-\newline dendriform\newline bialgebra & See~\cite[Definition 5.16]{Tridend+co} & ? \\
			\hline
			4 & Coquadriform\newline coalgebra & Coquadriform\newline bialgebra & ? & ? & ? \\
			\hline
		\end{tabular}
	}
\end{center}
In this paper, we introduce the new definitions of $(3,2)$ and $(2,3)$-dendriform bialgebras. The reader will find some pieces of information about tridendriform algebras, cotridendriform coalgebras and cotridendriform tridendriform bialgebras in~\cite{Tridend+co} at Sections 5.1, 5.3 and~5.16, respectively. Tridendriform algebras are studied in~\cite{Tridend}. Bidendriform bialgebras are introduced and studied in~\cite{bidend}.

 The fourth section introduces the notion of $(1,3)$-dendriform bialgebras which is the dual notion of $(3,1)$-dendriform bialgebras. Thanks to the previous theorem, we give a link between the graded dual of $\mathcal{A}$, denoted by $\mathcal{A}^{\circledast}$, and the recent paper of N. Bergeron and al~\cite{Eclair}, where the Hopf algebra $\TSym$ is introduced
\begin{customthm}{\ref{thm:dual}}
	The bialgebras $\mathcal{A}^{\circledast}$ and $\TSym^{\op}$ are the same.
\end{customthm}
So, $\TSym$ is a $(1,3)$-dendriform bialgebra and we get a description of its graded dual.
Finally, in the last section, we introduce the definition of $(3,2)$-dendriform bialgebras
 \begin{customdefi}{\ref{deftroisdeux}}\samepage
	A \emph{$(3,2)$-dendriform bialgebra} is a sextuple $(A,\prec,\cdot,\succ,\Delta_{\leftarrow},\Delta_{\rightarrow})$ such that
	\begin{itemize}\itemsep=0pt
		\item $(A,\prec,\cdot,\succ)$ is an augmented tridendriform algebra.
		\item $(A,\Delta_{\leftarrow},\Delta_{\rightarrow})$ is an augmented dendriform coalgebra.
		\item Using generalized Sweedler's notations, the following relations are true, for all $a,b\in A$
		\begin{gather*}
			\Delta_{\leftarrow}(a\cdot b) =a'b'_{\leftarrow}\otimes a''\cdot b''_{\leftarrow}+b'_{\leftarrow}\otimes a\cdot b''_{\leftarrow},\\
			\Delta_{\rightarrow}(a\cdot b) =a'b'_{\rightarrow}\otimes a''\cdot b''_{\rightarrow}+a'\otimes a''\cdot b+b'_{\rightarrow}\otimes a\cdot b''_{\rightarrow}, \\
			\Delta_{\leftarrow}(a\prec b) =a'b'_{\leftarrow}\otimes a''\prec b''_{\leftarrow}+a'b\otimes a'' +b'_{\leftarrow}\otimes a\prec b''_{\leftarrow}+b\otimes a,\\
			\Delta_{\rightarrow}(a\prec b) =a'b'_{\rightarrow}\otimes a''\prec b''_{\rightarrow}+a'\otimes a''\prec b +b'_{\rightarrow}\otimes a\prec b''_{\rightarrow},\\
			\Delta_{\rightarrow}(a\succ b) =a'b'_{\rightarrow}\otimes a''\succ b''_{\rightarrow}+a'\otimes a''\succ b +b'_{\rightarrow}\otimes a\succ b''_{\rightarrow}+ab'_{\rightarrow}\otimes b''_{\rightarrow}+a\otimes b, \\
			\Delta_{\leftarrow}(a\succ b) =a'b'_{\leftarrow}\otimes a''\succ b''_{\leftarrow}+a'\otimes a''\succ b +ab'_{\leftarrow}\otimes b''_{\leftarrow}.
		\end{gather*}\mathcenter
	\end{itemize}
\end{customdefi}
We give an example of such an object.
\begin{customprop}{\ref{Prop:troideuxEg}}
	We consider $(\mathcal{A},\prec,\cdot,\succ)$ with its tridendriform algebra structure, we build the following coproducts:
	\begin{gather*}
		\tilde{\Delta}_{\leftarrow}(t)=\left(\sum_{\substack{\text{\rm $c$ admissible cut of $t$} \\ \text{\rm the right leaf of $t$ has been cut}}} G^c(t)\otimes R^c(t)\right)-t\otimes 1, \\
		\tilde{\Delta}_{\rightarrow}(t)=\left(\sum_{\substack{\text{\rm $c$ admissible cut of $t$} \\ \text{\rm the right leaf of $t$ has not been cut}}} G^c(t)\otimes R^c(t)\right)-1\otimes t.
	\end{gather*}
Then $(\mathcal{A},\prec+\cdot,\succ,\Delta_{\leftarrow},\Delta_{\rightarrow})$ and $(\mathcal{A},\prec,\cdot+\succ,\Delta_{\leftarrow},\Delta_{\rightarrow})$ are bidendriform bialgebras. As a~consequence, $(\mathcal{A},\prec,\cdot,\succ,\Delta_{\leftarrow},\Delta_{\rightarrow})$ is a $(3,2)$-dendriform bialgebra.
\end{customprop}
There, we can apply the results from~\cite{bidend} to count and describe the codendriform primitives ans coassociative primitives. With this work, we are able to get all the codendriform primitives and we can count the coassociative ones. Finally, to end the paper, we study the ``natural'' quotient of $\mathcal{A}$ as a $(3,2)$-dendriform algebra by the set of elements obtained with the middle product $\cdot$ (we send the middle product $\cdot$ to $0$) in order to get back to the Loday--Ronco algebra given in~\cite{LR} and~\cite{AgLR}.

\subsection*{Some details about the bibliography}

 For other examples of tridendriform algebras, we can refer to the quasi-shuffle algebras present in~\cite{batc2, batc} or~\cite{batc3}. The tridendriform algebra of the parking functions is given in~\cite{Tridend}.
 In the document, Remarks \ref{Rq:vecsapcemult}, \ref{Rq:multcoprod} and \ref{Rq:multprod} describe the case with multiple generators. They are given without proofs, as it follows from the proofs detailed for one generator.

\subsection*{Document structure}

In all the document, $\K$ will be any commutative field. This document has four sections:
\begin{itemize}\itemsep=0pt
	\item The first one gives several reminders about tridendriform algebras and its augmentations.
	\item The following one describes the free tridendriform bialgebra (relying on~\cite{Trial}) with one generator, giving combinatorial descriptions of the product with quasi-shuffle and admissible cuts. There is also a description for multiple generators given as remarks.
	\item Then we study the graded dual of the free tridendriform bialgebra which is $\TSym$, the bialgebra whose coproduct is given by the lightning decomposition of a tree. As a consequence of this section, we will find a $(1,3)$-dendriform bialgebra structure for $\TSym$.
	\item The last section defines $(3,2)$-dendriform bialgebras and describe one example built on the free $(3,1)$-dendriform bialgebra with one generator given in the third section. We get it by decorating the right-most leaf of a tree and we check it satisfies the definition. Finally, we will show that a quotient of the free $(3,2)$-dendriform algebra is the Loday--Ronco algebra.
\end{itemize}

\section{Reminder}
\subsection{Tensor product for tridendriform algebras} \label{sec:tensor}
\begin{defi}\label{def:tridend}
	Let $\K$ be a field and $A$ be vector space of $\K$. We say $(A,\prec,\cdot,\succ)$ is a \emph{tridendriform algebra} if $\prec$, $\cdot$ and $\succ$ are all linear maps from $A\otimes A$ to $A$ such that for any $a,b,c\in A$
	\begin{gather}
		(a\prec b)\prec c =a\prec(b*c), \label{tri1}\\
		(a\succ b)\prec c =a\succ(b\prec c), \label{tri2}\\
		(a* b)\succ c =a\succ(b\succ c), \label{tri3} \\
		(a\succ b)\cdot c =a\succ(b\cdot c), \label{tri4} \\
		(a\prec b)\cdot c =a\cdot(b\succ c), \label{tri5} \\
		(a\cdot b)\prec c =a\cdot(b\prec c), \label{tri6} \\
		(a\cdot b)\cdot c =a\cdot (b\cdot c), \label{tri7}
	\end{gather}
where $*$ is the \emph{associative product} of the tridendriform algebra defined for any $a,b\in A$ by $a*b\coloneqq a\prec b + a\cdot b +a\succ b$.
We call the products $\succ$, $\prec$, $\cdot$ respectively right, left and middle.
\end{defi}
\begin{Rq}
	The associativity of $*$ is a consequence of those seven relations.
\end{Rq}
 Let us consider an associative algebra $(A,\star)$ and a tridendriform algebra $(B,\succ,\cdot,\prec)$.
 Our goal is to give $A\otimes B$ a tridendriform algebra structure. In order to reach this aim, let us define for all $\ltimes\in\{*,\prec,\succ,\cdot\}$
\begin{equation*}
\ltimes\colon \ \begin{cases}
(A\otimes B)\otimes (A\otimes B) \rightarrow A\otimes B, \\
(a\otimes b)\otimes (c\otimes d)\mapsto a \star c\otimes b\ltimes d,
\end{cases}
\end{equation*}
 With these definitions, we have $*=\prec +\cdot +\succ$.

 \begin{Lemme}\label{lem:tritens}
 	$(A\otimes B,\prec,\cdot,\succ)$ is a tridendriform structure.
 \end{Lemme}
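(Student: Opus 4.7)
The plan is to verify the seven tridendriform axioms \eqref{tri1}--\eqref{tri7} on $A\otimes B$ by direct computation, exploiting a clean observation: the three new products $\prec,\cdot,\succ$ on $A\otimes B$ all act as $*$ on the first tensor factor and as the corresponding tridendriform product on the second tensor factor. Consequently, in every axiom the first tensor factor reduces to an associativity check for $*$ on $A$, and the second tensor factor reduces to exactly the corresponding tridendriform axiom on $B$.

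More concretely, I would fix arbitrary elements $a\otimes b,\ c\otimes d,\ e\otimes f$ in $A\otimes B$ and go through the axioms one by one. The typical computation runs as follows. For \eqref{tri5}, for instance,
\begin{align*}
((a\otimes b)\prec(c\otimes d))\cdot(e\otimes f) &= (a*c\otimes b\prec d)\cdot(e\otimes f) \\
&= (a*c)*e\ \otimes\ (b\prec d)\cdot f, \\
(a\otimes b)\cdot((c\otimes d)\succ(e\otimes f)) &= (a\otimes b)\cdot(c*e\otimes d\succ f) \\
&= a*(c*e)\ \otimes\ b\cdot(d\succ f),
\end{align*}
and the equality of the two right-hand sides follows from associativity of $*$ on $A$ and from \eqref{tri5} applied to $b,d,f$ in $B$. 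A first preliminary step is therefore to record that $*$ on $A$ (and on $B$) is associative, which is an immediate consequence of summing \eqref{tri1}--\eqref{tri7}, as already noted in the paper.

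The remaining six axioms are verified by exactly the same template. For \eqref{tri1}, the left factor yields $(a*c)*e=a*(c*e)$ and the right factor uses $(b\prec d)\prec f=b\prec(d*f)$; for \eqref{tri2}, the right factor uses $(b\succ d)\prec f=b\succ(d\prec f)$; similarly for \eqref{tri3}, \eqref{tri4}, \eqref{tri6}, \eqref{tri7}. In each case the first tensor factor collapses to associativity of $*$ on $A$, and the second to the matching axiom on $B$. Finally one checks $*=\prec+\cdot+\succ$ on $A\otimes B$, which is immediate from the definitions since $b\prec d+b\cdot d+b\succ d=b*d$ in $B$.

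There is no real obstacle here; the definitions are rigged so that the axioms decouple factorwise. The only point requiring mild care is simply being systematic so that the same computation is not performed twice for symmetric-looking axioms (e.g.\ \eqref{tri4} versus \eqref{tri6}), and confirming that in every axiom the \emph{left} operation on the left-hand side is indeed $*$ on $A$ after one unfolds one level; this is exactly the reason the definitions place $a*c$ in the first slot for all three of $\prec,\cdot,\succ$.
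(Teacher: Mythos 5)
Your proof is correct and follows essentially the same route as the paper: the first tensor factor reduces to associativity of $*$ on $A$ and the second to the corresponding tridendriform axiom on $B$, together with the check that $*=\prec+\cdot+\succ$. The paper merely packages the seven verifications into one computation via the uniform notation $(a\ltimes b)\rtimes c=a\ltimes'(b\rtimes'c)$ over the set of quadruples \eqref{quadru}, which is exactly the systematization you describe.
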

\begin{proof}
 We wish that these definitions make $A\otimes B$ a tridendriform algebra.
 We will check the relations $\eqref{tri1}$ to $\eqref{tri7}$.
 To shorten the proof, we notice that the relations can be written as follows for all $a,b,c\in A$
 \begin{equation*}
 	(a\ltimes b)\rtimes c = a\ltimes' (b\rtimes' c),
 \end{equation*}
 for $(\ltimes, \rtimes, \ltimes', \rtimes')$ belonging to the following set:
 \begin{equation} 	
 	\{ (\prec,\prec, \prec,*),(\succ,\prec,\succ,\prec),(*,\succ,\succ,\succ),(\succ,\cdot,\succ,\cdot),(\prec,\cdot, \cdot,\succ),(\cdot, \prec,\cdot, \prec),(\cdot,\cdot,\cdot,\cdot) \}. \label{quadru}
 \end{equation}
 Let $(\ltimes,\rtimes,\ltimes',\rtimes')$ be an element of the set above. Let $a,c,e\in A$ and $b,d,f\in B$, then
 \begin{align*}
 	((a\otimes b) \ltimes (c\otimes d))\rtimes e\otimes f &=(a\star c \otimes b\ltimes d) \rtimes (e\otimes f)
 												 =(a\star c)\star e \otimes (b\ltimes d) \rtimes f \\
 												 &= a\star (c\star e) \otimes b\ltimes'(d \rtimes' f)
 												 = (a\otimes b)\ltimes'((c\otimes d) \rtimes' (e\otimes f)).
 \end{align*}
 Consequently, this shows that $A\otimes B$ is a tridendriform algebra.
 \end{proof}

Moreover, the tensor product for tridendriform algebras is associative.
 \begin{Lemme}\label{lem:tritensass}
 	For any tridendriform algebras $A$, $B$ and $C$, we have $A\otimes(B\otimes C)=(A\otimes B)\otimes C$ as tridendriform algebras.
 \end{Lemme}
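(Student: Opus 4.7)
The plan is to use the canonical vector-space associator $(A\otimes B)\otimes C\cong A\otimes(B\otimes C)$ and verify that, under this identification, each of the three tridendriform operations admits the same formula on a pure tensor $a\otimes b\otimes c$.

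First I would record the key auxiliary fact: on any tensor product $X\otimes Y$ of two tridendriform algebras built via Lemma~\ref{lem:tritens}, the total product decomposes coordinate-wise, i.e.
\[
(x\otimes y)*(x'\otimes y')=(x*x')\otimes(y*y'),
\]
since the three operations $\prec,\cdot,\succ$ share the same left coordinate $x*x'$ and their right coordinates sum to $y*y'=y\prec y'+y\cdot y'+y\succ y'$. This is the one algebraic identity driving the whole computation, and I would apply it to both $A\otimes B$ and $B\otimes C$.

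Then, for each $\ltimes\in\{\prec,\cdot,\succ\}$, I would compute directly on pure tensors using the definition of the tridendriform structure from Lemma~\ref{lem:tritens} applied twice. On the left,
\[
\bigl((a\otimes b)\otimes c\bigr)\ltimes\bigl((a'\otimes b')\otimes c'\bigr)=\bigl((a\otimes b)*(a'\otimes b')\bigr)\otimes(c\ltimes c')=(a*a')\otimes(b*b')\otimes(c\ltimes c'),
\]
and on the right,
\[
\bigl(a\otimes(b\otimes c)\bigr)\ltimes\bigl(a'\otimes(b'\otimes c')\bigr)=(a*a')\otimes\bigl((b\otimes c)\ltimes(b'\otimes c')\bigr)=(a*a')\otimes(b*b')\otimes(c\ltimes c').
\]
Under the canonical associator these two expressions agree, and since the three cases $\ltimes\in\{\prec,\cdot,\succ\}$ are handled uniformly, the two tridendriform structures coincide.

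There is essentially no real obstacle in this proof: none of the seven axioms \eqref{tri1}--\eqref{tri7} is invoked, only the bilinear definition of the operations on a tensor product and the decomposition $*=\prec+\cdot+\succ$. The only mild care required is bookkeeping, namely keeping track at each step of which instance of $\prec,\cdot,\succ,*$ (on $A$, on $B$, on $C$, or on one of the tensor products $A\otimes B$ or $B\otimes C$) is being applied, so as not to confuse the inner tridendriform structure with the outer one.
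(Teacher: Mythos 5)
Your proof is correct and follows essentially the same route as the paper: both compute the three operations on pure tensors via the definition from Lemma~\ref{lem:tritens} applied twice and observe that each side reduces to $(a*a')\otimes(b*b')\otimes(c\ltimes c')$. The only difference is that you make explicit the auxiliary identity $(x\otimes y)*(x'\otimes y')=(x*x')\otimes(y*y')$, which the paper uses tacitly.
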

\begin{proof}
One just needs to write down the definitions of the products.
 \end{proof}

\subsection{Augmented tridendriform algebras }\label{sec:Aug}
Let $(A,\succ, \prec, \cdot)$ be a tridendriform algebra. If we do some easy computations, we find that $(A,\succ +\cdot, \prec)$ and $(A,\succ, \prec +\cdot)$ are two dendriform algebras, as detailed in $\cite{bidend}$.
In order to define consistently the concept of \emph{augmented tridendriform algebra}, which we will denote by $(\overline{A},\succ,\prec,\cdot)$ where $\overline{A}=A\oplus \K1$ ($1$ is a unit for $*$ that we formally add to $A$), we need that $(\overline{A},\succ +\cdot, \prec)$ and $(\overline{A},\succ, \prec +\cdot)$ are both augmented dendriform algebras as defined in~\cite[Section~3.1]{Augm}.
Therefore, the definition of dendriform algebras applied to $(\overline{A},\succ +\cdot, \prec)$ gives us for all $a\in A$
\begin{equation*}
	a\prec 1+ a\cdot 1=a=1\succ a \qquad \text{and} \qquad 1\prec a + 1 \cdot a=0=a\succ 1.
\end{equation*}

Applying the same definition to $(\overline{A},\succ, \prec+\cdot)$, we get for all $a\in A$
\begin{equation*}
a\prec 1 =a=1\succ a+1\cdot a \qquad \text{and} \qquad 1\prec a =0=a\succ 1+a\cdot 1.
\end{equation*}

As a consequence, we need to define $1\cdot a=a\cdot 1=0$ for all $a\in A$.

\begin{defi}
	Let $(A,\succ,\prec,\cdot)$ be a tridendriform algebra. We expand the products on \linebreak$\K\otimes A\oplus A\otimes \K \oplus A\otimes A$ by defining
	\begin{gather}\label{def:augment}
		\forall a\in A, \!\qquad 1\prec a :=0=1\succ a, \!\qquad a\prec 1:=a=1\succ a \!\qquad \text{and} \!\qquad a\cdot 1:=0=1\cdot a.
	\end{gather}
	This construction gives us what we will call \emph{augmented tridendriform algebra}. For a tridendriform algebra $A$, we denote $\overline{A}$ the associated augmented tridendriform algebra.
\end{defi}

\begin{Rq}\label{Rq:relations+}
	With this definition of augmented tridendriform algebra, when both terms of the relations~\eqref{tri1}--\eqref{tri7} are well defined, there is an equality.
\end{Rq}

\begin{Rq}
	This is the unique way to define $a\succ 1$, $1\succ a$, $a\prec 1$ and $1\prec a$ such that~\eqref{tri1}--\eqref{tri7} are equalities as soon as both sides are well defined.
\end{Rq}
\begin{Rq}
	It is not possible to define $1\prec 1$ and $1\succ 1$ such that the relations~\eqref{def:augment} are satisfied over $\overline{A}.$
\end{Rq}

\subsection{Tensor product for augmented tridendriform algebras}

Let $\big(\overline{A},\succ,\prec,\cdot\big)$ and $\big(\overline{B},\succ,\prec,\cdot\big)$ be two augmented tridendriform algebras. As in the dendriform case, it is impossible to define $\succ$ and $\prec$ on the pair $(1,1)$ such that~\eqref{def:augment} still occurs. For this reason, we define
\[
A\,\overline{\otimes}\, B:=A\otimes B\oplus \K\otimes B \oplus A\otimes \K.
\]
The previous vector space is a subspace of $\overline{A}\otimes \overline{B}$ on which we will define the three opera\-tions~$\succ$,~$\prec$ and~$\cdot$.
\begin{Rq}\qquad
	\begin{itemize}\itemsep=0pt
		\item We can extend $*$ to $\overline{A}\otimes \overline{A}$ with $1*1=1$. Moreover $*$ is associative on $\overline{A}$.
		\item For two tridendriform algebras $A$ and $B$, we have $\overline{A\,\overline{\otimes}\, B}=\overline{A}\otimes \overline{B}$.
	\end{itemize}
\end{Rq}
We define
\begin{equation*}
	*\colon \ \begin{cases}
	\big(\overline{A}\otimes \overline{B}\big)^{\otimes 2} \rightarrow \big(\overline{A}\otimes \overline{B}\big), \\
	a\otimes b\otimes c\otimes d \mapsto a*c\otimes b*d.
	\end{cases}
\end{equation*}
To simplify the notations, we denote by
\begin{gather*}
	D=\left((A\otimes \K)^{\otimes 2}\right)\oplus\left((\K \otimes \K)\otimes (A\otimes \K)\right)\oplus \left((A\otimes \K)\otimes (\K\otimes \K)\right), \\
	 U= \left((A\otimes B)^{\otimes 2}\right)\oplus \left((A\otimes \K)\otimes (A\otimes B)\right)\oplus \left((A\otimes B)\otimes (A\otimes \K)\right).
\end{gather*}
So, $(A\overline{\otimes} B)^{\overline{\otimes} 2}=U\oplus D$. Then for all $\ltimes \in\{\prec,\cdot,\succ\}$, we define
\begin{align*}
&\ltimes\colon \ \begin{cases}
 (A\,\overline{\otimes}\, B)^{\overline{\otimes} 2} \rightarrow \big(\overline{A}\otimes \overline{B}\big), \\
a\otimes b\otimes c\otimes d\in U \mapsto a*c\otimes b\ltimes d, \\
a\otimes b\otimes c\otimes d \in D \mapsto a\ltimes c \otimes b*d.
\end{cases}
\end{align*}

\begin{Lemme}
	We consider two tridendriform algebras $A$ and $B$. The algebra $A\,\overline{\otimes}\, B$ is a tridendriform algebra with the products defined above.
\end{Lemme}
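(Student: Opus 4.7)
The plan is to verify the seven tridendriform relations \eqref{tri1}--\eqref{tri7} for the three products $\prec,\cdot,\succ$ on $A\overline{\otimes} B$. As in the proof of lemma \ref{lem:tritens}, all seven can be written uniformly as $(x\ltimes y)\rtimes z=x\ltimes'(y\rtimes' z)$ for quadruples $(\ltimes,\rtimes,\ltimes',\rtimes')$ enumerated in \eqref{quadru}, so I would treat them in parallel. Write $x=\alpha_1\otimes \beta_1$, $y=\alpha_2\otimes \beta_2$, $z=\alpha_3\otimes \beta_3$ with each pair in $(\overline{A}\times \overline{B})\setminus\{(1_A,1_B)\}$. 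The defining formulas split into a $U$-regime, which applies the augmented tridendriform on the $\beta$-slot, and a $D$-regime, which applies it on the $\alpha$-slot; the choice is dictated by whether both $\beta$-factors equal $1_B$, precisely so that the undefined expressions $1\prec 1$ and $1\succ 1$ never arise.

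The verification would then be a case analysis on the triple $(\beta_1,\beta_2,\beta_3)$, with a separate sub-case when one of $x,y,z$ equals the adjoined unit $1_A\otimes 1_B$ of $A\overline{\otimes} B$. When all three $\beta_i$ lie in $B$, every product stays in the $U$-regime and the verification is exactly the one of lemma \ref{lem:tritens} applied to $A$ and $B$. Symmetrically, when all three equal $1_B$ (forcing each $\alpha_i\in A$), every product stays in the $D$-regime and the identity reduces to the tridendriform relation on $A$, with associativity of $*$ acting trivially on the $\beta$-slot. When one of the arguments is $1_A\otimes 1_B$, the seven relations follow directly from the augmented rules \eqref{def:augment} applied on $A\overline{\otimes} B$ itself.

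The genuinely new case is the mixed one, where some but not all $\beta_i$ equal $1_B$: an intermediate product may switch between $U$ and $D$. This is resolved by explicit use of the augmented tridendriform rules on $\overline{B}$, namely $1\prec \beta=\beta\succ 1=0$, $1\succ \beta=\beta\prec 1=\beta$, and $1\cdot \beta=\beta\cdot 1=0$, which cause the matching terms on the two sides of each relation to vanish (or survive) in perfect correspondence. The main obstacle is precisely the bookkeeping of these subcases: each of the seven relations splits further according to which of $\beta_1,\beta_2,\beta_3$ equal $1_B$, and one has to check that the $U$-to-$D$ transition is consistent with the tridendriform relations on $A$. The point is that the augmented rules have been designed exactly so that this boundary is transparent, after which the verification is routine.
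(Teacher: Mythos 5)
Your proposal follows essentially the same route as the paper: a case analysis on which of the $B$-components of the three arguments are units, writing all seven relations uniformly via the quadruples of \eqref{quadru}, reducing the all-in-$B$ case to Lemma \ref{lem:tritens}, the all-units case to the relations in $A$, and resolving the mixed cases by the augmented rules \eqref{def:augment}. The only item the paper verifies that you do not mention is that $\prec+\cdot+\succ$ agrees with the separately defined product $*$ on $A\overline{\otimes}B$ (needed because the seven relations refer to $*$, whose definition does not follow the $U$/$D$ split); this is a routine addition to your plan.
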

\begin{Rq}
	This implies that $\overline{A}\otimes\overline{B}$ is an augmented tridendriform algebra.
\end{Rq}
\begin{proof}
In Section~\ref{sec:tensor}, we have checked that the relations~\eqref{tri1}--\eqref{tri7} are true when~$a$,~$b$,~$c$,~$d$,~$e$,~$f$ are elements of $A$ or $B$. We now need to check those relations when at least one of these elements is a unit.

Let $(\ltimes,\rtimes,\ltimes',\rtimes')$ be an element of the set defined at $\eqref{quadru}$.
To simplify the following, when we will consider $x\in\K 1$, we will suppose $x=1$ after multiplying by a scalar. There are four cases to check:
\begin{itemize}\itemsep=0pt
	\item \textit{Case} 1: $d\in B$ or $(b\in B \text{ and } f\in B)$;
	\item \textit{Case} 2: $b,d,f\in 1_B\K$;
	\item \textit{Case} 3: $d\in 1_B\K$, $f\in 1_B\K$ and $b\in B$;
	\item\textit{Case} 4: $b\in1_B\K$, $d\in1_B\K$, $f\in B$.
\end{itemize}
We prove Case~3 as an example. Other cases are left to the reader. We suppose $d\in 1_B\K$, $f\in 1_B\K$ and $b\in B$.
Then $c$ and $e$ cannot be units.
\begin{gather*}
	[(a\otimes b)\ltimes (c\otimes 1)]\rtimes (e\otimes 1) =(a*c\otimes b\ltimes 1)\rtimes (e\otimes 1)
	 =(a*c)*e\otimes (b\ltimes 1)\rtimes 1 \\
\hphantom{[(a\otimes b)\ltimes (c\otimes 1)]\rtimes (e\otimes 1)}{}=a*(c*e)\otimes b \times \delta_{(\ltimes,\rtimes),(\prec,\prec)}, \\
	(a\otimes b)\ltimes'[(c\otimes 1)\rtimes' (e\otimes 1)]
	 = a\otimes b\ltimes'(c\rtimes' e\otimes 1)
	 =a*(c\rtimes' e)\otimes b\ltimes' 1 \\
	\hphantom{(a\otimes b)\ltimes'[(c\otimes 1)\rtimes' (e\otimes 1)]}{} =a*(c\rtimes' e)\otimes b\ltimes' 1 \times \delta_{(\ltimes',\rtimes'),(\prec,*)}.
\end{gather*}
Having a look at $\eqref{quadru}$, we see that the only quadruple with $(\ltimes,\rtimes)=(\prec,\prec)$ is $(\prec,\prec,\prec,*)$. So, there is equality in this case.
Once, we have checked the relations~\eqref{tri1}--\eqref{tri7} on $A\,\overline{\otimes}\, B$.
There is still to check $*=\prec +\cdot +\succ$. With the work done in the first section, we only need to compute this when there are at least one unit. Those verifications are straightforward.
\end{proof}

\subsection[Associativity of overline\{otimes\}]{Associativity of $\boldsymbol{\overline{\otimes}}$}

Given $\overline{A}$, $\overline{B}$ and $\overline{C}$ three augmented tridendriform algebras, do $(A\,\overline{\otimes}\, B)\,\overline{\otimes}\, C$ and $A\,\overline{\otimes}\,(B\,\overline{\otimes}\,C)$ have the same tridendriform algebra structure?

From the first section, $(A\otimes B)\otimes C$ and $A\otimes (B\otimes C)$ have the same tridendriform algebra structure. There is still to check what happens with units.

\begin{Lemme}\label{lem:tensassoaugm}
	For $\overline{A}$, $\overline{B}$ and $\overline{C}$ three augmented tridendriform algebras, we have
	\[
	A\,\overline{\otimes}\,(B\,\overline{\otimes}\, C)=(A\,\overline{\otimes}\,B)\,\overline{\otimes}\,C \text{ as tridendriform algebras.}
	\]
\end{Lemme}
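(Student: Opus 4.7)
My plan is to identify both $A\overline{\otimes}(B\overline{\otimes}C)$ and $(A\overline{\otimes}B)\overline{\otimes}C$ with the same subspace of the triple tensor product $\overline{A}\otimes\overline{B}\otimes\overline{C}$, carrying identical tridendriform structures.

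I would first check that both sides coincide as vector spaces. Unfolding the definition of $\overline{\otimes}$ and using the earlier remark $\overline{A\overline{\otimes}B}=\overline{A}\otimes\overline{B}$ together with its analogue for $B,C$, both $A\overline{\otimes}(B\overline{\otimes}C)$ and $(A\overline{\otimes}B)\overline{\otimes}C$ decompose, under the canonical identifications, as the direct sum of every ``sector'' of $\overline{A}\otimes\overline{B}\otimes\overline{C}$ except the one-dimensional line $\K\cdot(1_A\otimes 1_B\otimes 1_C)$. Hence they are equal as subspaces.

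Next I verify that the three operations agree. Lemma~\ref{lem:tritensass} already handles the ``pure'' sector $A\otimes B\otimes C$. For an arbitrary product $(a\otimes b\otimes c)\ltimes(a'\otimes b'\otimes c')$ with $\ltimes\in\{\prec,\cdot,\succ\}$ and each factor lying either in the corresponding non-unit algebra or equal to $1$, I unravel the recursive definition of $\ltimes$ in each parenthesisation. A direct case analysis on the right-hand factors produces the same formula in both orderings, namely
\[
(a\otimes b\otimes c)\ltimes(a'\otimes b'\otimes c')=\begin{cases}(a*a')\otimes(b*b')\otimes(c\ltimes c')&\text{if } c'\in C,\\ (a*a')\otimes(b\ltimes b')\otimes(c*c')&\text{if } c'=1\text{ and } b'\in B,\\ (a\ltimes a')\otimes(b*b')\otimes(c*c')&\text{if } c'=b'=1\text{ and }a'\in A.\end{cases}
\]
In words, the rightmost non-unit position of the right-hand argument receives the operation $\ltimes$, while every remaining slot receives the associative product $*$. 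The fact that this rule is recovered from both sides comes down to the observation that, in $A\overline{\otimes}(B\overline{\otimes}C)$, the outer step branches according to whether the pair $b'\otimes c'$ is the global unit $1_{\overline{B\overline{\otimes}C}}=1\otimes 1$, whereas in $(A\overline{\otimes}B)\overline{\otimes}C$ it first branches on whether $c'$ is a unit; iterating once further in each direction splits the ``not-the-global-unit'' branch into exactly the two sub-cases above.

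The only real difficulty is bookkeeping: one has to traverse each of the seven admissible sectors of $\overline{A}\otimes\overline{B}\otimes\overline{C}$ and confirm, using the unit extensions $1\succ x=x\prec 1=x$, $x\succ 1=1\prec x=0$ and $1\cdot x=x\cdot 1=0$ together with the associativity of $*$ on $\overline{A},\overline{B},\overline{C}$, that both parenthesisations produce the same element. Since these unit rules were chosen precisely so that $\eqref{tri1}$--$\eqref{tri7}$ and $*=\prec+\cdot+\succ$ persist in the augmented setting, the verification succeeds in every sector, giving the claimed equality of tridendriform algebras.
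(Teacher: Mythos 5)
Your overall strategy -- identify the two sides as the same subspace of $\overline{A}\otimes\overline{B}\otimes\overline{C}$ (namely everything except $\K(1\otimes 1\otimes 1)$, which is correct) and then compare the two product formulas sector by sector -- is exactly the paper's, but the explicit formula you derive for the trilinear products is wrong, and the error comes from a misreading of how the product on an augmented tensor product branches. For $X\overline{\otimes}Y$, the operation $\ltimes$ is applied in the $Y$-slot whenever \emph{at least one of the two arguments} has a non-unit $Y$-component (this is what the decomposition into $U$ and $D$ encodes: $D$ consists of the pairs in which \emph{both} $Y$-components are scalars), and it is applied in the $X$-slot only when both $Y$-components are units. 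Your formula instead branches on the right-hand factor alone ("the rightmost non-unit position of the right-hand argument receives $\ltimes$"). Concretely, take $c\in C$, $b,b'\in B$, $a,a'\in A$ and compute $(a\otimes b\otimes c)\succ(a'\otimes b'\otimes 1)$. In either parenthesisation the pair of $C$-components is $(c,1)$ with $c\neq 1$, so $\succ$ lands in the $C$-slot and the result is $(a*a')\otimes(b*b')\otimes(c\succ 1)=0$; your formula gives $(a*a')\otimes(b\succ b')\otimes c$, which is nonzero in general. The same discrepancy occurs for $\prec$ and $\cdot$.

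The correct common formula, which is what the paper establishes, is
\[
(a\otimes b\otimes c)\ltimes(a'\otimes b'\otimes c')=\begin{cases}(a*a')\otimes(b*b')\otimes(c\ltimes c')&\text{if } c\neq 1 \text{ or } c'\neq 1,\\ (a*a')\otimes(b\ltimes b')\otimes 1&\text{if } c=c'=1 \text{ and } (b\neq 1 \text{ or } b'\neq 1),\\ (a\ltimes a')\otimes 1\otimes 1&\text{otherwise,}\end{cases}
\]
i.e.\ $\ltimes$ goes into the rightmost slot at which \emph{some} factor (left or right) is a non-unit. Once the branching condition is corrected, the rest of your argument goes through: in $(A\overline{\otimes}B)\overline{\otimes}C$ the outer step tests whether both $c$ and $c'$ are units, while in $A\overline{\otimes}(B\overline{\otimes}C)$ the outer step tests whether both $b\otimes c$ and $b'\otimes c'$ equal $1\otimes 1$, and iterating the inner definition in each direction reproduces the same three cases. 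As written, however, the stated formula describes neither parenthesisation, so the verification you claim would not succeed.
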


\begin{proof}
In the following, we will use the identifications $A\otimes \K\approx A$, $B\otimes \K\approx B$, $C\otimes \K\approx C$ and $A\otimes(B\otimes C)\approx (A\otimes B)\otimes C$. As a consequence, we see that $\big(\overline{A}\otimes \overline{B}\big)\otimes \overline{C}$ and $\overline{A}\otimes \big(\overline{B}\otimes \overline{C}\big)$ have the same underlying vector space.
 We put $D$ the following set:
 \[
 \big(\overline{A} \otimes \overline{B}\otimes C\big)^{\otimes 2} \oplus\big(\big(\overline{A}\otimes \overline{B}\otimes C\big)\otimes \big(\overline{A}\otimes \overline{B}\otimes \K\big)\big) \oplus \big(\big(\overline{A}\otimes \overline{B}\otimes \K\big)\otimes \big(\overline{A}\otimes \overline{B}\otimes C\big)\big).
 \]
 For any $\ltimes\in\{*,\prec,\succ,\cdot\}$, the tridendriform structure over $((A \,\overline{\otimes}\, B)\,\overline{\otimes}\, C)^{\overline{\otimes} 2}$ is given by
	\begin{align*}
	&\ltimes_{1}\colon \ \begin{cases}
	 ((A \,\overline{\otimes}\, B)\,\overline{\otimes}\, C)^{\overline{\otimes} 2} \rightarrow (A\,\overline{\otimes}\,B)\,\overline{\otimes}\, C ,\\
	(a\otimes b)\otimes c \otimes (a'\otimes b')\otimes c'\in D \mapsto (a\otimes b)*(a'\otimes b')\otimes c\ltimes c', \\
	(a\otimes b)\otimes c \otimes (a'\otimes b')\otimes c'\in ((A\overline{\otimes} B)\otimes \K)^{\overline{\otimes} 2} \mapsto (a\otimes b)\ltimes(a'\otimes b')\otimes c* c'.
	\end{cases}
\end{align*}
Extending by linearity, we get an map $\ltimes_{1}$ defined on all $((A\,\overline{\otimes}\, B)\,\overline{\otimes}\, C)^{\overline{\otimes} 2}.$
Let us put $D'$ the set
\[
\big(\overline{A}\otimes (B\,\overline{\otimes}\,C)\big)^{\overline{\otimes} 2}\oplus \big(\big(\overline{A}\otimes \K^{\otimes 2}\big)\otimes \big(\overline{A}\otimes (B\,\overline{\otimes}\,C)\big)\big)\oplus \big(\big(\overline{A}\otimes (B\,\overline{\otimes}\,C)\big)\otimes \big(\overline{A}\otimes \K^{\otimes 2}\big)\big).
\]
The tridendriform structure on $\overline{A} \otimes \big(\overline{B}\otimes \overline{C}\big)$ is given for all $\ltimes\in \{ *,\prec,\succ,\cdot \}$ by
\begin{equation*}
	\ltimes_2\colon \ \begin{cases}
	 (A\,\overline{\otimes}\, (B\,\overline{\otimes}\, C))^{\overline{\otimes} 2} \rightarrow A\,\overline{\otimes}\,(B\,\overline{\otimes}\, C), \\
	a\otimes (b\otimes c) \otimes a'\otimes(b'\otimes c')\in D' \mapsto 	a*a'\otimes [(b\otimes c)\ltimes (b'\otimes c')], \\
	a\otimes (b\otimes c) \otimes a'\otimes(b'\otimes c')\in\smash{\big(A\otimes\K^{\otimes 2 }\big)}^{\overline{\otimes} 2} \mapsto 	a\ltimes a'\otimes [(b\otimes c)* (b'\otimes c')].
	\end{cases}
\end{equation*}
By linearity of $\ltimes_1$ and $\ltimes_2$, for all $x\in \overline{A}$.
Then doing some straightforward computations, we obtain for all $\ltimes\in\{\prec,\succ,\cdot,*\}$, $\ltimes_1=\ltimes_2$. This proves the lemma.
\end{proof}

\begin{Not}
	Let $A$ be an augmented tridendriform algebra. We denote by $A^+$ the augmentation ideal of $A$.
\end{Not}
 \begin{Rq}
 	With those notations, we have $\overline{A^+}=A$.
 \end{Rq}
 Before ending this section, we will need a definition and an easy lemma.
 \begin{defi}
 	Let $A$, $B$ be two tridendriform algebras. Let $f\colon A\rightarrow B$ a linear map.
 	We say that $f$ is a \emph{tridendriform algebra morphism} if for all $a,b\in A$ and for all $\ltimes\in\{\prec,\cdot,\succ\}$
 	\begin{align*}
 		f(a\ltimes b)&=f(a)\ltimes f(b).
 	\end{align*}
 	If $A$, $B$ are augmented, we say that $f$ is an \emph{augmented tridendriform algebra morphism} if $f\colon A^+\rightarrow B^+$ is a tridendriform algebra morphism with $f(1)=1.$
 \end{defi}
 \begin{Lemme}\label{Lemme:morphtridend}
 	Let $A$, $A'$, $B$, $B'$ be four augmented tridendriform algebras. Consider two tridendriform algebra morphisms $f\colon A\rightarrow A'$ and $g\colon B\rightarrow B'$. Then the map
 	\begin{equation*}
 		f\overline{\otimes} g \colon \ \begin{cases}
 			A^+\,\overline{\otimes}\, B^+ \rightarrow A'\,\overline{\otimes}\, B', \\
 			a\otimes b \mapsto f(a)\otimes g(b)
 		\end{cases}
 	\end{equation*}
 	is a tridendriform algebra morphism. Defining
 	\begin{equation*}
 		f\otimes g \colon \ \begin{cases}
 			A\otimes B \rightarrow A\otimes B, \\
 			a\otimes b\in A^+\,\overline{\otimes}\, B^+ \mapsto (f\,\overline{\otimes}\, g)(a\otimes b), \\
 			1\otimes 1 \mapsto 1\otimes 1,
 		\end{cases}
 	\end{equation*}
 	we get an augmented tridendriform algebra morphism.
 \end{Lemme}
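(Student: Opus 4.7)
The plan is to verify the tridendriform morphism identity
\[
(f\overline{\otimes}g)(X\ltimes Y)=(f\overline{\otimes}g)(X)\ltimes(f\overline{\otimes}g)(Y)
\]
directly by case analysis on the decomposition $(A^+\overline{\otimes}B^+)^{\overline{\otimes}2}=U\oplus D$ introduced in the previous subsection, for each $\ltimes\in\{\prec,\cdot,\succ\}$. The only property of $f$ and $g$ used will be that, as tridendriform algebra morphisms, they preserve $\prec,\cdot,\succ$ individually, and hence also preserve the associative product $*=\prec+\cdot+\succ$.

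On the piece $U$, where by definition $(a\otimes b)\ltimes(c\otimes d)=a*c\otimes b\ltimes d$, I would unpack
\[
(f\overline{\otimes}g)\bigl((a\otimes b)\ltimes(c\otimes d)\bigr)=f(a*c)\otimes g(b\ltimes d)=f(a)*f(c)\otimes g(b)\ltimes g(d),
\]
which is exactly $\bigl(f(a)\otimes g(b)\bigr)\ltimes\bigl(f(c)\otimes g(d)\bigr)$ by the same $U$-formula applied in $A'\overline{\otimes}B'$. The case $X\otimes Y\in D$ is entirely symmetric: there the product reads $(a\otimes b)\ltimes(c\otimes d)=a\ltimes c\otimes b*d$, and the same unpacking works with the roles of the two tensor factors exchanged, putting the morphism-preserved $\ltimes$ on the first slot and $*$ on the second. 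Extending by linearity yields the first claim.

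The second claim is then pure definition-chasing. The augmentation ideal of the augmented tridendriform algebra $A\otimes B=\overline{A^+\overline{\otimes}B^+}$ is precisely $A^+\overline{\otimes}B^+$, so $f\otimes g$ restricted to this ideal is $f\overline{\otimes}g$, which is tridendriform by what precedes, and the unit is sent to the unit by construction; this matches verbatim the definition of augmented tridendriform algebra morphism given just above. I expect no real obstacle here; the only point worth a moment's attention is the fact that $*$ is preserved by $f$ and $g$ even though it does not appear explicitly in the morphism hypothesis, since it is recovered as the sum $\prec+\cdot+\succ$, and the bookkeeping that the $U/D$ split for the source pair is transported under $f\overline{\otimes}g$ to the corresponding $U/D$ split for the image pair.
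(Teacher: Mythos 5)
Your proposal is correct and follows essentially the same route as the paper: the paper also splits into the case where at least one of $b,d$ lies in $B^+$ (your $U$ piece, giving $f(a*c)\otimes g(b\ltimes d)=f(a)*f(c)\otimes g(b)\ltimes g(d)$) and the case $b=d=1$ (your $D$ piece, giving $f(a\ltimes c)\otimes g(1)=f(a)\ltimes f(c)\otimes 1$), using exactly the observation that $f$ and $g$ preserve $*=\prec+\cdot+\succ$. The second claim is likewise dispatched in the paper as an immediate consequence of the first.
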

\begin{proof}
	The proof is straightforward.
\end{proof}

\section{The free tridendriform algebra and its coproduct}

Let $\Tridend$ be the quadratic operad generated by $\Tridend_2=\langle\succ,\prec,\cdot\rangle $ satisfying the relations~\eqref{tri1}--\eqref{tri7}.
Let us consider $\mathcal{A}^+=\Tridend(\K)$. In other words, $\mathcal{A}^+$ is a free tridendriform algebra generated by one element. Referring to~\cite[Theorem 2.6]{Trial}, we get $\mathcal{A}^+=\bigoplus_{n\geq 1} \K T_n$ where~$T_n$ is the set of all planar rooted trees with $n+1$ leaves in which every internal node has fertility at least $2$. In other words, $T_n$ is the set of reduced trees (also called Schr\"oder trees) with~$n+1$ leaves. Then we add a unit to this tridendriform algebra. We will denote in the following $\mathcal{A}=\overline{\mathcal{A}^+}$,
\[
\mathcal{A}=\bigoplus_{n\geq 0} \K T_n.
\]
\begin{Egs}
	$T_0=\left\lbrace |\right\rbrace$, $T_1=\left\lbrace\Y\right\rbrace$,
		$T_2=\left\lbrace
		\balais,
		 \balaisg ,
		\balaisd
	 \right\rbrace$.
\end{Egs}

\begin{Rq} \label{Rq:vecsapcemult} \quad
		\begin{itemize}\itemsep=0pt
			\item For more generators, we need to define:
				Let $n\in\N$. Let $\mathcal{D}$ be a graded set by $\N$, this means $\mathcal{D}=\bigsqcup_{i\in\N} \mathcal{D}_i$. We denote $T^g_n(\mathcal{D})$ the set of all rooted trees with $n+1$ leaves such that each \emph{internal node} $x$ is decorated by an element of $\mathcal{D}_n$ where $n$ is the number of children of the internal node $x$.
			\item Let $X$ be a non-empty set. The vector space on which we define the free tridendriform algebra generated by $X$ is
			\[
			\mathcal{A}(X)=\bigoplus_{n=0}^{+\infty} T^g_n(X^\star),
			\]
			where $X^\star$ is the set of all finite words on the alphabet $X$ and for all $n\in\N$, $X^{\star}_n$ is the set of all words of length $n$ over the alphabet $X$.
			An equivalent definition exists decorating the angular sectors of a tree by one letter from $X$ instead. For more details, refer to~\cite[Section 4]{Tridendmult}. For example, this is two representations of a same element of $\mathcal{A}(X)$:
			\begin{gather*}
				\raisebox{-0.5\height}{\begin{tikzpicture}[line cap=round,line join=round,>=triangle 45,x=0.4cm,y=0.4cm]
						\draw (0,0)--(0,1);
						\draw (0,1)--(3,4);
						\draw (0,1)--(-3,4);
						\draw (-1,2)--(-1,4);
						\draw (-1,2)--(1,4);
						\draw (0.25,3.25)--(0.25,4);
						\draw (0.25,3.25)--(-0.5,4);
						\draw (0.25,3.25)--(1,4);
						\draw (0,1) node[below,left] {\footnotesize{a}};
						\draw (-1,2) node[left] {\footnotesize{bc}};
						\draw (0.25,3.25) node[right] {\footnotesize{de}};
				\end{tikzpicture}}, \qquad \raisebox{-0.5\height}{\begin{tikzpicture}[line cap=round,line join=round,>=triangle 45,x=0.4cm,y=0.4cm]
				\draw (0,0)--(0,1);
				\draw (0,1)--(3,4);
				\draw (0,1)--(-3,4);
				\draw (-1,2)--(-1,4);
				\draw (-1,2)--(1,4);
				\draw (0.25,3.25)--(0.25,4);
				\draw (0.25,3.25)--(-0.5,4);
				\draw (0.25,3.25)--(1,4);
				\draw (0,1) node[above] {\footnotesize{a}};
				\draw (-1.5,2.5) node[above] {\footnotesize{b}};
				\draw (-0.5,2.5) node[above] {\footnotesize{c}};
				\draw (0.6,3.55) node[above] {\footnotesize{e}};
				\draw (0,3.55) node[above] {\footnotesize{d}};
		\end{tikzpicture}}.
				\end{gather*}
		\end{itemize}
\end{Rq}

Let $k$ be a non-negative integer. Let $x^{(0)},\dots,x^{(k)}$ be trees from $T_{n_i}$ for $i\in\IEM{1}{k}$ and $n_i\in\N$. We denote by $x^{(0)}\vee \dots \vee x^{(k)}$ the tree obtained by grafting on the same root and from left to right the trees $x^{(0)},\dots, x^{(k)}$. Then for $x$ and $y$ two trees, there exist unique trees $x^{(0)},\dots,x^{(k)}$ and $y^{(0)},\dots, y^{(l)}$ such that
\begin{align*}
	&x=x^{(0)}\vee \dots \vee x^{(k)}, \qquad
	y=y^{(0)}\vee \dots \vee y^{(l)}.
\end{align*}

With those notations, Theorem 2.6 from~\cite{Trial} tells us that we can define the three products inductively by
\begin{gather*}
	x\prec y =x^{(0)}\vee \dots \vee \big(x^{(k)}*y\big), \\
	x\cdot y =x^{(0)}\vee \dots \vee \big(x^{(k)}*y^{(0)}\big)\vee \dots \vee y^{(l)}, \\
	x\succ y =\big(x*y^{(0)}\big)\vee \dots \vee y^{(l)},
\end{gather*}
 initializing the induction with $|\prec y=0$, $y\prec |=y$, $x\prec |=x$, $x\succ |=0$, $|\cdot y=0=|\cdot|$ and~$|*|=|$.
 \begin{Egs}\quad
 	\begin{itemize}\itemsep=0pt
 \item Taking $k=2$ and $x^{(0)}=|$, $x^{(1)}=\Y$, $x^{(2)}=|$, $x^{(0)}\vee x^{(1)}\vee x^{(2)}$ is the following tree:\[
 	\begin{tikzpicture}[line cap=round,line join=round,>=triangle 45,x=0.3cm,y=0.3cm]
 	\draw (0,0)-- (0,0.5);
 	\draw (0,0.5)-- (-1.5,2);
 	\draw (0,0.5)-- (1.5,2);
 	\draw (0,0.5)-- (0,1.25);
 	\draw (0,1.25)-- (-0.5,2);
 	\draw (0,1.25)-- (0.5,2);
 	\end{tikzpicture}.
 \]

\item On the other hand, the tree denoted by $x^{(1)}\vee x^{(0)}\vee x^{(2)}$ is
\[
	\begin{tikzpicture}[line cap=round,line join=round,>=triangle 45,x=0.3cm,y=0.3cm]
	\draw (0,0)-- (0,2);
	\draw (0,0.5)-- (-1.5,2);
	\draw (-1,1.5)-- (-0.5,2);
	\draw (0,0.5)-- (1.5,2);
	\end{tikzpicture}.
\]
\end{itemize}
\end{Egs}
Our investigation to find a bialgebra with a tridendriform structure begins here. With this objective in mind, we need to endow $\mathcal{A}$ with a bialgebra structure. Using the dendriform structure of $\mathcal{A}\otimes \mathcal{A}$, it is natural to impose for all $x,y\in \mathcal{A}$
\begin{gather*}
	\Delta(x\prec y)=\Delta(x)\prec \Delta(y),\qquad \Delta(x\cdot y)=\Delta(x)\cdot\Delta(y),\qquad \Delta(x\succ y)=\Delta(x)\succ\Delta(y),	
\end{gather*}
which leads to the following definition:
\begin{defi}
	We will call a \emph{tridendriform bialgebra} or \emph{$(3,1)$-dendriform bialgebra} any augmented tridendriform algebra $(H,\succ,\prec,\cdot)$ which also has a bialgebra structure $(H,*,1,\Delta,\varepsilon)$, where $*=\prec+\cdot+\succ$, satisfying the following compatibilities, for all $x,y\in H^+$
	\begin{gather*}
		\Delta(x\prec y)=\Delta(x)\prec \Delta(y),\qquad \Delta(x\cdot y)=\Delta(x)\cdot\Delta(y),\qquad \Delta(x\succ y)=\Delta(x)\succ\Delta(y).	
	\end{gather*}
\end{defi}

This notion of tridendriform bialgebra is such that $(H,\prec,\succ+\cdot,1_H,\Delta,\varepsilon)$ and $(H,\prec+\cdot,\succ,\allowbreak 1_H,\Delta,\varepsilon)$ are both dendriform bialgebras, see Definition 1.3 with $q=1$ from~\cite{Tridend}.
 Let us remind these definitions.

\begin{defi}\label{graded}
	Let $(H,m,1,\Delta,\varepsilon)$ be a bialgebra. We say $H$ is \emph{graded} if there exists a sequence~$(H_n)_{n\in\N}$ of vectorial subspaces of $H$ of finite dimension verifying
	\[
	H=\bigoplus_{n=0}^{+\infty}H_n.
	\]
	Moreover, this grading satisfies
	\begin{itemize}\itemsep=0pt
		\item $\forall i,j\in\N$, $m(H_i\otimes H_j)\subseteq H_{i+j}$,
		\item $\forall n\in\N$, $\Delta(H_n)\subseteq \sum_{i+j=n} H_i\otimes H_j$.
	\end{itemize}
\end{defi}

\begin{defi}\label{connected}
	A graded Hopf algebra $H$ is called \emph{connected} if $H_0$ has dimension $1$.
\end{defi}

Then giving $\mathcal{A}$ the grading $\mathcal{A}_n=\K T_n$ for all $n\in \N$, we also wish that $(\mathcal{A},*,|,\Delta,\varepsilon)$ is a~\emph{graded and connected} bialgebra.
\begin{Not}
	For any tree $t$, $\nf(t)$ is the number of leaves of $t$ and $\deg(t)=\nf(t)-1$.
\end{Not}
 \subsection{Coproduct description} 

 In this section, we will look for a coalgebra structure on $(\mathcal{A},\prec,\cdot,\succ)$.
\begin{defi}\label{defi:candidat}
	We define the following tridendriform algebra morphism:
	\begin{equation*}
		\Delta\colon \ \begin{cases}
			\mathcal{A} \rightarrow \mathcal{A}\overline{\otimes} \mathcal{A} ,\\
			\Y \mapsto \Y\otimes | +|\otimes \Y, \\
			| \mapsto |\otimes |.
		\end{cases}
	\end{equation*}	
\end{defi}
But, as $\mathcal{A}$ is the free tridendriform algebra generated by one element, this morphism exists and is unique because $\mathcal{A}\overline{\otimes} \mathcal{A}$ is also a tridendriform algebra as seen in the previous section. We will check at Theorem~\ref{thm:coassDelta} that $\Delta$ is coassociative.

\begin{Not}
	For any $n\in \N$, we denote by $c_n$ the corolla with $n+1$ leaves
	\[
		\begin{tikzpicture}[line cap=round,line join=round,>=triangle 45,x=0.4cm,y=0.4cm]
			\draw (0,0)--(0,0.5);
			\draw (0,0.5)--(-2,2);
			\draw (0,0.5)-- (-1,2);
			\draw (0,1.5) node{$\cdots$};
			\draw (0,0.5)--(2,2);
			\draw (0,0.5)-- (1,2);
		\end{tikzpicture}.
	\]
\end{Not}

\begin{Rq}
	We describe here an explicit technique in order to find one writing of a tree.
 Let $n\in\N$ and $x\in T_n$.
 We write $x$ uniquely as $x=x^{(0)}\vee\cdots \vee x^{(k)}.$ We need to consider four cases:
\begin{itemize}\itemsep=0pt
 \item\textit{Case} 1: $\forall i\in\IEM{0}{k}$, $x^{(i)}=|$.
 Then $x=\Y\cdot c_k$.

 \item\textit{Case} 2: $x^{(0)}\neq |.$
 Then $x=x^{(0)}\succ \big(|\vee x^{(1)}\vee \cdots \vee x^{(k-1)}\vee x^{(k)}\big)$.

 \item\textit{Case} 3: $x^{(k)}\neq |.$
 Then $x=(x^{(0)}\vee \cdots \vee x^{(k-1)}\vee |)\prec x^{(k)}$.

 \item\textit{Case} 4: there exists $i\in \IEM{1}{k-1}$ such that $x^{(i)}\neq |$. Then we obtain
 \begin{align*}
 	x&=x^{(0)}\vee \cdots\vee x^{(i-1)}\vee x^{(i)}\vee x^{(i+1)}\vee \cdots \vee x^{(k)} \\
 	&=\big(x^{(0)}\vee \cdots \vee x^{(i-1)}\vee |\big)\cdot \big(x^{(i)}\vee x^{(i+1)}\vee \cdots \vee x^{(k)}\big) \\
 	&=\big(x^{(0)}\vee \cdots \vee x^{(i)}\big)\cdot \big(| \vee x^{(i+1)}\vee \cdots \vee x^{(k)}\big),
 \end{align*}
 \end{itemize}
where the two writings are equal thanks to the relation $\eqref{tri5}$. Then we apply this process once more on the branches of $x$ and so on. Finally, we get a writing of $x$ over the language $\{\prec,\cdot,\succ\}$ with constant symbol $\big\{\Y\big\}$.
 \label{algo}
 \end{Rq}

 In the following, we will give a lemma that helps us to find an explicit writing of $\Delta$.
\begin{Not}
	For all $x\in \mathcal{A}^+$, we denote $\tilde{\Delta}(x)=\Delta(x)-|{\otimes} \,x-x\, {\otimes} |.$
\end{Not}

 \begin{Lemme}
 	\label{cocalcul}
 	Let $t,s\in \mathcal{A}=\Tridend(\K)$. Then
 	\begin{gather*}
 		\tilde{\Delta}(t\cdot s) =\tilde{\Delta}(t)\cdot \tilde{\Delta}(s)+ (1\otimes t)\cdot \tilde{\Delta}(s)+\tilde{\Delta}(t)\cdot(1\otimes s), \\
 		\tilde{\Delta}(t\prec s) =s\otimes t+ (1\otimes t)\prec \tilde{\Delta}(s)+\tilde{\Delta}(t)\prec (1\otimes s)+\tilde{\Delta}(t)*(s\otimes 1)+\tilde{\Delta}(t)\prec \tilde{\Delta}(s), \\
 		\tilde{\Delta}(t\succ s) =t\otimes s+(1\otimes t)\succ\tilde{\Delta}(s)+(t\otimes 1)*\tilde{\Delta}(s)+\tilde{\Delta}(t)\succ (1\otimes s)+\tilde{\Delta}(t)\succ\tilde{\Delta}(s).
 	\end{gather*}
 \end{Lemme}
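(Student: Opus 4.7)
The plan is to deduce all three identities from the single fact that $\Delta\colon\mathcal{A}^+\to \mathcal{A}^+\overline{\otimes}\mathcal{A}^+$ is a tridendriform algebra morphism (which holds by its defining universal property, Definition~\ref{defi:candidat}). Thus for $\ltimes\in\{\prec,\cdot,\succ\}$ one has
\[
\Delta(t\ltimes s)=\Delta(t)\ltimes \Delta(s).
\]
I would then write $\Delta(t)=t\otimes 1+1\otimes t+\tilde{\Delta}(t)$ and similarly for $s$, and expand the right-hand side bilinearly into nine summands indexed by the three pieces of $\Delta(t)$ and the three pieces of $\Delta(s)$.

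Each summand is evaluated using the rules for $\ltimes$ on the augmented tensor product $\mathcal{A}^+\overline{\otimes}\mathcal{A}^+$ given in Section~\ref{sec:Aug}, namely the ``dense'' rule $(a\otimes b)\ltimes(c\otimes d)=a*c\otimes b\ltimes d$ whenever one of $b,d$ lies in $\mathcal{A}^+$, and the ``boundary'' rule $(a\otimes 1)\ltimes(c\otimes 1)=a\ltimes c\otimes 1$, together with the unit conventions $1\prec x=0$, $x\prec 1=x$, $1\succ x=x$, $x\succ 1=0$ and $1\cdot x=x\cdot 1=0$ from Definition~\ref{def:augment}. For the $\cdot$ identity, the annihilation property $1\cdot x=x\cdot 1=0$ kills all six mixed terms involving a $t\otimes 1$ or $s\otimes 1$ factor on the right; the only surviving contributions are the boundary piece $(t\otimes 1)\cdot(s\otimes 1)=t\cdot s\otimes 1$, the ``fully left'' piece $(1\otimes t)\cdot(1\otimes s)=1\otimes t\cdot s$, and the three terms $(1\otimes t)\cdot\tilde{\Delta}(s)$, $\tilde{\Delta}(t)\cdot(1\otimes s)$ and $\tilde{\Delta}(t)\cdot\tilde{\Delta}(s)$. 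For $\prec$ and $\succ$ the bookkeeping is slightly richer: the crossed term $(1\otimes t)\prec(s\otimes 1)$ evaluates, via $t\prec 1=t$, to $s\otimes t$, producing the extra summand $s\otimes t$ in the $\prec$ identity; symmetrically $(t\otimes 1)\succ(1\otimes s)=t\otimes s$ produces the corresponding term in the $\succ$ identity. The two ``mixed'' terms $\tilde{\Delta}(t)\prec(s\otimes 1)$ and $(t\otimes 1)\succ\tilde{\Delta}(s)$ collapse $\prec$ (resp.\ $\succ$) into $*$ on the second factor (because $t''\prec 1=t''$ and $1\succ s''=s''$), which is exactly what is needed to produce the terms $\tilde{\Delta}(t)*(s\otimes 1)$ and $(t\otimes 1)*\tilde{\Delta}(s)$ appearing in the statement.

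Finally, I would subtract $(t\ltimes s)\otimes 1+1\otimes(t\ltimes s)$ from each expansion; by construction of $\tilde{\Delta}$ this isolates $\tilde{\Delta}(t\ltimes s)$ on the left, and the two boundary contributions identified above account precisely for the subtracted terms, leaving exactly the right-hand sides of the three displayed identities. The computation is thus purely mechanical; the only point requiring attention is the correct case-splitting among the nine bilinear terms and the careful application of the two halves of the tensor-product rule. The main obstacle is the sheer bookkeeping, in particular recognising that the dense rule converts a $\ltimes$ on the first tensor factor into a $*$-product, which is what is responsible for the $\tilde{\Delta}(t)*(s\otimes 1)$ and $(t\otimes 1)*\tilde{\Delta}(s)$ terms appearing specifically in the $\prec$ and $\succ$ cases.
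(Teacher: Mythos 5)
Your proposal is correct and follows essentially the same route as the paper: expand $\Delta(t\ltimes s)=\Delta(t)\ltimes\Delta(s)$ into the nine bilinear terms coming from $\Delta(x)=1\otimes x+x\otimes 1+\tilde{\Delta}(x)$, evaluate each with the unit conventions and the two tensor-product rules, and subtract the primitive part. The bookkeeping you describe (the crossed terms producing $s\otimes t$ and $t\otimes s$, and the mixed terms collapsing $\prec$, $\succ$ into $*$) matches the paper's computation exactly.
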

\begin{proof}
	Let $\ltimes\in\{ \succ,\cdot,\prec\}$. Let $t,s\in \mathcal{A}$. Then
	\begin{gather*}
		\Delta(t\ltimes s) =\big(1\otimes t+t\otimes 1+\tilde{\Delta}(t)\big)\ltimes \big(1\otimes s+s\otimes 1+\tilde{\Delta}(s)\big) \\
		\hphantom{\Delta(t\ltimes s)}{}=1\otimes t\ltimes s +s\otimes t\ltimes 1+(1\otimes t)\ltimes \tilde{\Delta}(s)+t\otimes 1\ltimes s+ t\ltimes s\otimes 1 \\
		\hphantom{\Delta(t\ltimes s =)}{}+ (t\otimes 1)\ltimes \tilde{\Delta}(s)
		+\tilde{\Delta}(t)\ltimes(a\otimes s)+\tilde{\Delta}(t)\ltimes (s\otimes 1)+\tilde{\Delta}(t)\ltimes \tilde{\Delta}(s).
		\end{gather*}
	We get the lemma applying the definition of $\prec$, $\cdot$ and $\succ$ on the equation above.
	\end{proof}

\begin{defi}
	Let $t$ be a tree, we define the \emph{valence} of a vertex $v$ of $t$ as the number of outgoing edges of $v$.
	A \emph{leaf} of $t$ is an edge of valence $0$. We call a vertex of $t$ \emph{internal} if it is not a leaf.
	
	We say that an edge of $t$ is \emph{internal} if it links two internal vertices.
\end{defi}

\begin{defi}
	Let $t$ be a tree. A \emph{cut} of $t$ is a non-empty choice of internal edges of $t$. We call \emph{empty cut} of $t$, the cut which chooses no edge of $t$ and we call \emph{total cut} the cut ``under the root of $t$''.

	A cut $c$ of $t$ is called \emph{admissible} if all paths from the root to one of the leaves do not meet more than one chosen edge by $c$. The total and empty cuts are admissible cuts.
\end{defi}

\begin{Eg}
On the tree \smash{\raisebox{-0.5\height}{\begin{tikzpicture}[line cap=round,line join=round,>=triangle 45,x=0.25cm,y=0.25cm]
		\draw (0,0)--(0,1);
		\draw (0,1)--(-2,3);
		\draw (0,1)--(2,3);
		\draw (1,2)--(0,3);
		\draw (1.5,2.5)--(1,3);
		\draw (-1.5,2.5)--(-1,3);
		\draw (1,2.3)--(1.75,2.3);
		\draw (0,1.5)--(1.25,1.5);
		\draw (-1.5,2)--(-0.5,2);
\end{tikzpicture}}} the drawn cut is not admissible but \smash{\raisebox{-0.5\height}{\begin{tikzpicture}[line cap=round,line join=round,>=triangle 45,x=0.25cm,y=0.25cm]
		\draw (0,0)--(0,1);
		\draw (0,1)--(-2,3);
		\draw (0,1)--(2,3);
		\draw (1,2)--(0,3);
		\draw (1.5,2.5)--(1,3);
		\draw (-1.5,2.5)--(-1,3);
		\draw (1,2.3)--(1.75,2.3);
		\draw (-1.5,2)--(-0.5,2);
\end{tikzpicture}}} is admissible.
\end{Eg}
\begin{Not}
	Let $t$ be a tree and $c$ be an admissible cut of $t$ which is neither empty, neither total. By withdrawing the chosen edges by $c$, we find some trees that we will denote $G^c_1(t),\dots,G^c_m(t)$, where $m$ is the number of edges chosen by $c$, $G^c_1(t)$ is the left-most tree after withdrawing all edges of $c$, \dots, and $G^c_m(t)$ is the right-most tree.
	
	We also denote by $R^c(t)$ the component after withdrawing the edges of $c$ containing the root of $t$.
	If $c$ is the empty cut, we define $R^c(t):=t$ and $G^c(t):=|.$ If $c$ is the total cut, we define $R^c(t):=|$ and $G^c(t)=t.$
\end{Not}
\begin{Egs}\quad
\begin{itemize}\itemsep=0pt
	\item The empty cut $c$ of the tree $t$ gives $R^c(t)=t.$
	\item The total cut $c$ of $t$ gives $G^c(t)=t.$
	\item Let us consider the tree \raisebox{-0.3\height}{\begin{tikzpicture}[line cap=round,line join=round,>=triangle 45,x=0.2cm,y=0.2cm]
			\draw (0,0)--(0,1);
			\draw (0,1)--(-2,3);
			\draw (0,1)--(2,3);
			\draw (-1.5,2.5)--(-1,3);
			\draw (1.5,2.5)--(1,3);
			\draw (0.5,2)--(1.5,2);
	\end{tikzpicture}}
	with the cut $c$ symbolized by the horizontal line cutting the tree. In this case, $R^c(t)=\balaisg$ and $G^c(t)=\Y.$
	\item With \raisebox{-0.5\height}{\begin{tikzpicture}[line cap=round,line join=round,>=triangle 45,x=0.3cm,y=0.3cm]
		\draw (0,0)--(0,1);
		\draw (0,1)--(0,3);
		\draw (0,1)--(-3,4);
		\draw (0,1)--(3,4);
		\draw (2,3)--(2,4);
		\draw (2,3)--(1,4);
		\draw (0,3)--(0.5,4);
		\draw (0,3)--(-0.5,4);
		\draw (-2,3)--(-1,4);
		\draw (-2.5,3.5)--(-2,4);
		\draw (1,2.5)--(2,2.5);
		\draw (-0.5,2)--(0.5,2);
		\draw (-3,3.35)--(-2,3.35);
	\end{tikzpicture}}, we get $R^c(t)=\raisebox{-0.5\height}{\begin{tikzpicture}[line cap=round,line join=round,>=triangle 45,x=0.15cm,y=0.15cm]
	\draw (0,0)--(0,1);
	\draw (0,1)--(-3,4);
	\draw (0,1)--(0,4);
	\draw (0,1)--(3,4);
	\draw (-2,3)--(-1,4);
\end{tikzpicture}}$ and $G^c_1(t)=\Y$, $G^c_2(t)=\Y$, $G^c_3(t)=\balais.$
	\end{itemize}
\end{Egs}
Observing our results, we get
\begin{Prop}\label{Prop:candidat}
	The map defined at Definition~$\ref{defi:candidat}$ over $\mathcal{A}$ is given for all trees $t$ by the formula
	\begin{gather*}
		\Delta(t)=\sum_{c \text{ \normalfont admissible cut of }t} G^c(t)\otimes R^c(t), \qquad
		\Delta(|)=|\otimes |,
	\end{gather*}
	where $R^c(t)$ is the component of $t$ containing its root and $G^c(t)=G^c_1(t)*\cdots *G^c_k(t)$, where $G^c_i(t)$ are naturally ordered from left to right.
\end{Prop}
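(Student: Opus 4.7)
Let $\Delta'(t)$ denote the right-hand side of the asserted formula. By the universal property of $\mathcal{A}$ as the free tridendriform algebra on the generator $\Y$ that was used to characterize $\Delta$ in Definition \ref{defi:candidat}, it is enough to show (i) that $\Delta'$ agrees with $\Delta$ on the generators $|$ and $\Y$, and (ii) that $\Delta'$ is a morphism of tridendriform algebras $\mathcal{A}\to\mathcal{A}\overline{\otimes}\mathcal{A}$. Part (i) is immediate: neither $|$ nor $\Y$ has any internal edge (the root vertex being uncounted), so only the empty and total cuts are admissible, yielding $|\otimes |$ for $|$ and $|\otimes\Y + \Y\otimes|$ for $\Y$, in agreement with Definition \ref{defi:candidat}.

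For (ii), I would set $\tilde{\Delta}'(t) = \Delta'(t) - |\otimes t - t\otimes|$ on $\mathcal{A}^+$ and verify, by induction on the number of leaves of $t$, that $\tilde{\Delta}'$ satisfies the same three recursive identities proven for $\tilde{\Delta}$ in Lemma \ref{cocalcul}. The inductive hook is Remark \ref{algo}: any tree with at least three leaves can be written as $a\ltimes b$ for some $\ltimes\in\{\prec,\cdot,\succ\}$ and trees $a,b$ of strictly smaller size, so the three recursive identities, once verified for $\tilde{\Delta}'$, immediately imply the multiplicativity $\Delta'(a\ltimes b)=\Delta'(a)\ltimes \Delta'(b)$ and propagate the equality $\Delta = \Delta'$ at the next rank.

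The main obstacle, and essentially the entire combinatorial content of the argument, lies in the termwise matching between admissible cuts of $a\ltimes b$ and the summands appearing on the right-hand side of Lemma \ref{cocalcul}. The admissible cuts of $a\ltimes b$ naturally partition according to how they meet the grafted pieces: entirely inside the subtree corresponding to $a$, entirely inside that of $b$, across both, or cleanly isolating the newly formed joining branch. For the middle product $\cdot$, the rightmost root-branch of $a$ is fused with the leftmost root-branch of $b$, producing exactly the three mixed contributions $\tilde{\Delta}(a)\cdot\tilde{\Delta}(b)$, $(1\otimes a)\cdot\tilde{\Delta}(b)$ and $\tilde{\Delta}(a)\cdot(1\otimes b)$. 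For $\prec$ and $\succ$ the extra free leaf on the grafted side produces the additional boundary terms $b\otimes a$ or $a\otimes b$, together with the mixed contributions of the shape $\tilde{\Delta}(a)*(b\otimes 1)$ and $(a\otimes 1)*\tilde{\Delta}(b)$ appearing in Lemma \ref{cocalcul}. Once this dictionary between cuts and terms is checked, the induction closes and the proposition follows.
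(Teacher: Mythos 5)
Your plan is correct and follows essentially the same route as the paper: an induction on the number of leaves in which each tree is decomposed via Remark \ref{algo} as $u\ltimes v$ with the relevant extremal subtree of $u$ trivial, so that $u\ltimes v$ is a single tree and its admissible cuts correspond bijectively to pairs of admissible cuts of $u$ and $v$ (the branches of $v$ falling to the right of those of $u$), which is exactly the termwise matching with the summands of Lemma \ref{cocalcul} that you describe. The only caution is that this clean cut dictionary requires precisely that normalization of $u$ — for arbitrary $a,b$ the product $a\ltimes b$ is a linear combination of trees and ``the admissible cuts of $a\ltimes b$'' is not meaningful termwise — but since you anchor the induction on Remark \ref{algo}, your argument coincides with the paper's.
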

\begin{proof}
	We proceed by induction over the number of leaves. As seen before,
	\[
	\Delta\big(\Y\big)=\Y\otimes |+|\otimes \Y.
	\]
	This corresponds to the statement of our proposition.
	
	\textit{Heredity:} let $n\in \N$. Suppose that for all trees with at most $n+1$ leaves, their coproduct is written like in the proposition. Let $t$ be a tree with $n+2$ leaves.
	There exist $k,s\in \N$ such that $k+s=n+3$ and $u\in T_k$, $v\in T_s$ such that one of these equations is true:
	\begin{alignat}{3}
		&t=u\prec v, \qquad&& \text{where} \quad u=u^{(0)}\vee u^{(1)}\vee \cdots \vee u^{(r-1)}\vee |, &\label{cond1} \\
		&t=u\succ v, \qquad && \text{where} \quad v=|\vee v^{(1)}\vee \cdots \vee v^{(r-1)}\vee v^{(r)}, &\label{cond2} \\
		& t=u\cdot v, \qquad && \text{where} \quad u=u^{(0)}\vee \cdots \vee u^{(r-1)}\vee |, & \label{cond3}
	\end{alignat}
using Remark $\ref{algo}$ and $r$ denotes the valence of the root of $t$.
	Under the condition $\eqref{cond1}$, we have
	\begin{gather*}
		 \Delta(t) = \Delta(u)\prec \Delta(v) \\
		\hphantom{\Delta(t)}{}= \bigg(\sum_{c_u \text{ admissible cut of }u} G^{c_u}(u)\otimes R^{c_u}(u)\bigg)\prec \bigg(\sum_{c_v \text{ admissible cut of }v} G^{c_v}(v)\otimes R^{c_v}(v)\bigg) \\
		\hphantom{\Delta(t)}{} = \sum_{\substack{c_u \text{ admissible cut of }u \\
		c_v \text{ admissible cut of }v\\ R^{c_u}\neq |\wedge R^{c_v}\neq | \\
	R^{c_u}\neq u \wedge R^{c_v}\neq v }} G^{c_u}(u)*G^{c_v}(v)\otimes R^{c_u}(u)\prec R^{c_v}(v) +|\otimes u\prec v+u\prec v\otimes |.
	\end{gather*}
	But $u=u^{(0)}\vee u^{(1)}\vee \cdots \vee u^{(r-1)}\vee |$. As a consequence, an admissible cut of $u\prec v=u^{(0)}\vee u^{(1)}\vee \cdots \vee u^{(r-1)}\vee v$ is an admissible cut of $u$ followed by an admissible cut of $v$. Moreover, the branches falling from $v$ fall at the right of the branches from $u$. This implies
	\[
	\Delta(u\prec v)=\Delta(t)=\sum_{c \text{ admissible cut of }t} G^{c}(t)\otimes R^{c}(t).
	\]
	With an analogous idea, we get the same result in the case $\eqref{cond2}$. In the case $\eqref{cond3}$, we get $t=u\cdot v$ with $u=u^{(0)}\vee \cdots \vee u^{(r-1)}\vee |$. As a consequence,
	\begin{align*}
		\Delta(t) &=\bigg( \sum_{c_u \text{ admissible cut of }u} G^{c_u}(u)\otimes R^{c_u}(u)\bigg)\cdot \bigg(\sum_{c_v \text{ admissible cut of }v}G^{c_v}(v)\otimes R^{c_v}(v) \bigg) \\
				&=|\otimes u\cdot v + u\cdot v\otimes |+ \sum_{\substack{c_u \text{ admissible cut of }u \\c_u \text{ admissible cut of }v\\ R^{c_u}\neq |\wedge R^{c_v}\neq | \\
				R^{c_u}\neq u \wedge R^{c_v}\neq v }} G^{c_u}(u)*G^{c_v}(v)\otimes R^{c_u}(u)\cdot R^{c_v}(v).
	\end{align*}
	We denote $v=v^{(0)}\vee \cdots \vee v^{\left(r'\right)}$.
	With the shape of $u$, an admissible cut of $u\cdot v=u^{(0)}\vee \cdots \vee u^{(r-1)}\vee v^{(0)}\vee \cdots \vee v^{\left(r'\right)}$ is an admissible cut of $u$ followed by an admissible cut of $v$. Moreover, the branches of $v$ fall at the right of the branches of $u$.
	This shows that
	\[
	\Delta(t)=\sum_{c \text{ admissible cut of }t} G^{c}(t)\otimes R^{c}(t).
	\tag*{\qed}\]
	\renewcommand{\qed}{}
\end{proof}

\begin{Rq}\label{Rq:multcoprod}
	In the case of multiple generators, the coproduct over $\mathcal{A}(\mathcal{D})$ is the same extending naturally its definition to decorated trees.
\end{Rq}

\begin{thm}\label{thm:coassDelta}
	The coproduct of Proposition~$\ref{Prop:candidat}$ is coassociative and has for counit the linear map defined by
	\[
	\e(t)=\begin{cases}
		1 & \text{\rm if } t=|, \\
		0 &\text{\rm else}.
	\end{cases}
	\]
	As a consequence, $(\mathcal{A},*,|,\Delta,\varepsilon)$ is a connected graded bialgebra and a tridendriform bialgebra. Moreover, $\Delta$ is the unique map making $(\mathcal{A},*,|)$ a tridendriform bialgebra.
\label{coproduit}	
\end{thm}
\begin{proof}
	It is enough to show that $\Delta$ is coassociative, unique and $\varepsilon$ satisfies the counit property.
	First, let us remind that $\mathcal{A}^+$ is the free tridendriform algebra generated by one element. The result of Lemma~\ref{lem:tritens} says that $\mathcal{A}^+\otimes \mathcal{A}^+$ is a tridendriform algebra. Using the universal property of the free tridendriform algebra given by the operad theory, we find the existence of a unique tridendriform algebra morphism such that
	\begin{equation*}
		\Delta \colon \ \begin{cases}
			\mathcal{\mathcal{A}}^+ \rightarrow \mathcal{\mathcal{A}}^+\overline{\otimes} \mathcal{\mathcal{A}}^+ ,\\
			\Y \mapsto |\otimes \Y +\Y\otimes |.
		\end{cases}
	\end{equation*}
	Then we expand the definition of $\Delta$ to $\mathcal{\mathcal{A}}$ by defining $\Delta(|)=|\otimes |.$
	Consequently, we have the uniqueness of $\Delta$. We get two maps
	\begin{gather*}
			({\rm Id} \otimes \Delta)\circ\Delta \colon \ \mathcal{A} \rightarrow \mathcal{A}\otimes (\mathcal{A}\otimes \mathcal{A}), \\
			(\Delta \otimes {\rm Id})\circ\Delta \colon \ \mathcal{A} \rightarrow (\mathcal{A}\otimes \mathcal{A})\otimes \mathcal{A}.
	\end{gather*}
But we have seen that the final spaces of these two maps have the same augmented tridendriform structure in Lemma~\ref{lem:tensassoaugm}. By Lemma~\ref{Lemme:morphtridend}, $(\Delta \otimes {\rm Id})\circ\Delta$ and $({\rm Id} \otimes \Delta)\circ\Delta$ are morphisms of augmented tridendriform algebras. Then we deduce that these two applications are equal over~$\mathcal{A}^+$ because $({\rm Id} \otimes \Delta)\circ\Delta\big(\Y\big)=(\Delta \otimes {\rm Id})\circ\Delta\big(\Y\big)$ and $\Y$ generates $\mathcal{A}^+$ as tridendriform algebra. Moreover, $({\rm Id}\otimes \Delta)\circ\Delta(|)=(\Delta\otimes {\rm Id})\circ \Delta(|)$. So, $(\Delta \otimes {\rm Id})\circ \Delta=({\rm Id}\otimes \Delta)\circ \Delta$ over $\mathcal{A}$. So $\Delta$ is coassociative.
	
	 For the counit property, using Lemma~\ref{Lemme:morphtridend}, we know that $(\varepsilon\otimes {\rm Id})\circ \Delta$ and $({\rm Id} \otimes \varepsilon)\circ \Delta$ are morphisms of augmented tridendriform algebras. Moreover,
	\begin{align*}
		(\varepsilon\otimes {\rm Id})\circ \Delta\big(\Y\big)=\Y=({\rm Id} \otimes \varepsilon)\circ \Delta\big(\Y\big).
	\end{align*}
Thanks to the universal property of the free tridendriform algebra $\mathcal{A}^+$, these two morphisms are equal to ${\rm Id}$. It is clearly the same for $\langle | \rangle$. So, it is the identity over $\mathcal{A}$.
\end{proof}

\begin{defi}
	For any bialgebra $(H,*,1,\Delta,\varepsilon)$, we define
	\[
	\Prim(H):=\big\{ x\in H \mid \tilde{\Delta}(x)=0 \big\}.
	\]
\end{defi}
\begin{Cor}
	Every tree $t$ element of $\Prim(\mathcal{A})$ is a corolla.
\end{Cor}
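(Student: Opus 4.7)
My plan is to read directly off the explicit coproduct formula from the preceding proposition and show that the only way $\tilde{\Delta}(t)$ can vanish on a tree is for $t$ to admit no non-trivial admissible cut, after which the fertility condition forces $t$ to be a corolla.

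First, I rewrite the formula as
\[
\tilde{\Delta}(t)=\sum_{c}G^c(t)\otimes P^c(t),
\]
where $c$ ranges over the admissible cuts of $t$ that are neither empty nor total; the two discarded contributions are exactly $|\otimes t$ and $t\otimes |$, which cancel with the $1\otimes t$ and $t\otimes 1$ subtracted in the definition of $\tilde{\Delta}$. I then establish two combinatorial facts. First, the map $c\mapsto P^c(t)$ is injective, because cutting an internal edge of $t$ leaves a stub leaf in $P^c(t)$ at a distinguished position, and the collection of stub leaves visible inside the rooted planar tree $P^c(t)$ reconstructs the cut $c$. Second, each $G^c(t)=G^c_1(t)*\cdots*G^c_k(t)$ is a non-zero element of $\mathcal{A}$, because it is an iterated $*$-product of non-trivial trees and, by the inductive definitions of $\prec,\cdot,\succ$ on the $\vee$-decomposition, such a product expands as a sum of trees with non-negative integer coefficients and at least one strictly positive coefficient.

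Combining these two points, the terms $G^c(t)\otimes P^c(t)$ lie in pairwise distinct subspaces $\mathcal{A}\otimes \K\cdot P^c(t)$, so their sum vanishes if and only if each $G^c(t)$ does, i.e.\ if and only if $t$ has no non-trivial admissible cut. The absence of a non-trivial admissible cut is in turn equivalent to the absence of any internal edge of $t$. A reduced tree (with every internal node of fertility at least $2$) without any internal edge can only consist of a single internal node with leaves as sons, which is the definition of a corolla.

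The step I expect to be the main obstacle is the non-vanishing of $G^c(t)$: one has to rule out any accidental cancellation among iterated $*$-products of non-trivial trees in the free tridendriform algebra $\mathcal{A}$. The cleanest way out is to observe from the inductive formulas on the $\vee$-decomposition that each of $\prec,\cdot,\succ$ sends a pair of non-trivial trees to a sum of trees with non-negative integer coefficients, with at least one strictly positive; iterating shows the same for $*$, and hence $G^c(t)\neq 0$.
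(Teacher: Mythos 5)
Your proof is correct, but it follows a genuinely different route from the paper's. The paper argues by induction on the number of leaves: any tree $t$ with at least three leaves factors as $u\prec v$, $u\cdot v$ or $u\succ v$ with $u,v$ smaller (remark \ref{algo}), and the recursive formulas of lemma \ref{cocalcul} show that the $\prec$ and $\succ$ cases produce the non-cancelling terms $v\otimes u$, resp.\ $u\otimes v$, so only $t=u\cdot v$ with $u,v$ primitive survives; induction then makes $u,v$ corollas and hence $t$ a corolla. You instead read everything off the closed admissible-cut formula, reducing the statement to two combinatorial facts: injectivity of $c\mapsto P^c(t)$ on admissible cuts, and non-vanishing of $G^c(t)$. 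Both facts are true and your arguments for them are essentially sound; the non-vanishing of $G^c(t)$ also follows at once from theorem \ref{produit} and remark \ref{battage}, which exhibit any $*$-product of trees as a sum of distinct trees with coefficient $1$. Your approach buys something the paper's does not: since every nontrivial admissible cut contributes a nonzero term in its own slot $\mathcal{A}\otimes\K\, P^c(t)$, you get the equivalence ($t$ primitive $\iff$ $t$ has no internal edge $\iff$ $t$ is a corolla), i.e.\ the converse as well, which the paper only illustrates by examples. The one point I would tighten is the injectivity argument: from $P^c(t)$ alone one cannot ``see'' which leaves are stubs, but matching $P^c(t)$ against $t$ node by node from the root identifies the retained internal vertices, and $c$ is then recovered as the set of edges from a retained vertex to a non-retained internal vertex; since a retained vertex keeps its number of children after cutting, this matching is forced, so distinct admissible cuts yield distinct $P^c(t)$.
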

\begin{proof}
	Let $t\in\Prim(\mathcal{A})$. If $t$ has two leaves, then $t=\Y$.
	Let us suppose $t$ has at least three leaves.
	Then using the fact that \Y generates the tridendriform algebra $\mathcal{A}$, we deduce the existence of two trees $u,v$ which have strictly less leaves than $t$ such that
	\[
	t=u\prec v \qquad \text{or} \qquad t=u\cdot v \qquad \text{or} \qquad t=u\succ v.
	\]
	By Lemma~\ref{cocalcul}, the only possibility for $t$ to be primitive is $t=u\cdot v$ with $u,v\in\Prim(\mathcal{A})$. By induction hypothesis, $u$ and $v$ are corollas. So $t$ is a corolla.
\end{proof}

\begin{Egs}\quad
	\begin{itemize}\itemsep=0pt
		\item $\Y \cdot \Y=|\vee (|*|)\vee |=\balais$ is primitive.
		\item $\balais\cdot \Y=|\vee |\vee (|*|)\vee |= \raisebox{-0.3\height}{\begin{tikzpicture}[line cap=round,line join=round,>=triangle 45,x=0.2cm,y=0.2cm]
				\draw (0,0)-- (0,1);
				\draw (0,1)-- (1.5,2);
				\draw (0,1)-- (0.5,2);
				\draw (0,1)-- (-0.5,2);
				\draw (0,1)-- (-1.5,2);
		\end{tikzpicture}}$ is primitive.
	\end{itemize}
\end{Egs}

\subsection{Description of the product}
\subsubsection{Quasi-shuffles}
In this part, our objective is to give non inductive descriptions of the product $*$ and the three others operations $\prec$, $\cdot$, $\succ$. We shall need the following definition:
\begin{defi}[quasi-shuffle]
	Let $k,l\in\N\setminus \{0\}$. A \emph{$(k,l)$-quasi-shuffle} is a surjective map $\sigma\colon\IEM{1}{k+l}\twoheadrightarrow\IEM{1}{n}$ surjective for some positive integer $n$ such that
	\[
	\sigma(1)<\cdots<\sigma(k) \qquad \text{and} \qquad \sigma(k+1)<\cdots<\sigma(k+l).
	\]
	We will denote $\batc(k,l)$ the set of all $(k,l)$-quasi-shuffles.
\end{defi}

The reader may find more details about quasi-shuffles and tridendriform algebras in~\cite{Bruhat}.

\begin{Rq}
	We will use the following notation to describe quasi-shuffles. Let $k,l\in\N\setminus\{0\}$. Let $\sigma$ be a surjective map of $\IEM{1}{k+l}$ into some $\IEM{1}{n}$. We will denote $\sigma$ like this
	\[
	(\sigma(1),\dots,\sigma(k),\sigma(k+1),\dots,\sigma(k+l)).
	\]
\end{Rq}
\begin{Rq}
	Let $\sigma\in\batc(k,l)$, then $\sigma^{-1}(\{1\})$ is either $\{1\}$, $\{k+1\}$ or $\{1,k+1\}$.
\end{Rq}

\subsubsection{Comb representation of a tree}

Let $t$ be a tree. It $t$ can always be seen as
\begin{center}
\raisebox{-0.5\height}{\begin{tikzpicture}[line cap=round,line join=round,>=triangle 45,x=0.5cm,y=0.5cm]
\draw (0,0)--(0,1);
\draw (0,1)--(-2,2) node[above]{$t^1_1$};
\draw (0,1)--(0,2) node[above]{$t^1_{n_1}$};
\draw (-1,2) node{$\cdots$};
\draw (0,1)--(7,8);
\draw (3,4)--(1,5) node[above]{$t^2_1$};
\draw (3,4)--(3,5) node[above]{$t^2_{n_2}$};
\draw (2,5) node{$\cdots$};
\draw (4.7,6) node[rotate=45]{$\cdots$};
\draw (6,7)--(4,8) node[above]{$t^k_1$};
\draw (6,7)--(6,8) node[above]{$t^k_{n_k}$};
\draw (5,8) node {$\cdots$};
\end{tikzpicture}}
\end{center}
where $k$ is the number of nodes on the right-most branch of $t$ and for $i\in\IEM{1}{k}$, $n_i+1$ is the number of sons of the $i$-th node of this branch.

\begin{Not}
	Let $F=t_1\dots t_n$ be a forest composed of $n$ trees. Instead of writing
	\begin{center}
		\raisebox{-0.5\height}{\begin{tikzpicture}[line cap=round,line join=round,>=triangle 45,x=0.5cm,y=0.5cm]
				\draw (0,0)--(0,1);
				\draw (0,1)--(-2,2) node[left,above]{$t_1$};
				\draw (0,1)--(0,2) node[right,above]{$t_{n}$};
				\draw (-1,2) node{$\cdots$};
				\draw (0,1)--(2,2);
		\end{tikzpicture}}
	\end{center}
	we will write
	\raisebox{-0.3\height}{\begin{tikzpicture}[line cap=round,line join=round,>=triangle 45,x=0.3cm,y=0.3cm]
			\draw (0,0)--(0,1);
			\draw (0,1)--(-1,2) node[left]{$F$};
			\draw (0,1)--(1,2);
	\end{tikzpicture}}.
\end{Not}

As a consequence, we deduce that all tree $t$ can be seen as
\begin{center}
	\raisebox{-0.5\height}{\begin{tikzpicture}[line cap=round,line join=round,>=triangle 45,x=0.3cm,y=0.3cm]
		\draw (0,0)--(0,1);
		\draw (0,1)--(-1,2) node[left]{$F_1$};
		\draw (0,1)--(6,7);
		\draw (2,3)--(1,4) node[left]{$F_2$};
		\draw (3.,4.5) node[rotate=45]{$\cdots$};
		\draw (5,6)--(4,7) node[left]{$F_k$};
	\end{tikzpicture}}
\end{center}
where $F_1,\dots,F_k$ are non-empty forests.
\begin{defi}
	The writing defined above is the writing of $t$ as a \emph{right comb}.
	Proceeding the same way, looking at the left branch of the root of $t$, we get the writing of $t$ as a \emph{left comb}.
\end{defi}

\subsubsection{Action of the quasi-shuffles over ordered pairs of trees}

Let $t$, $s$ two trees different from $|$. We look $t$ as a right comb and $s$ as a left comb. In other words, we put
	\begin{align*}
		&t=\peignedroit{1}{2}{k}
		\qquad \text{and} \qquad s=\peignegauche{k+1}{k+2}{k+l}
	\end{align*}
where for all $i\in\IEM{1}{k+l}$, $F_i$ is a non-empty forest. Here, $k$ represents the number of nodes on the right-most branch of $t$ and $l$ is the number of nodes on the left-most branch of $s$.

\begin{defi}\label{def:quasiaction}
	Let $t$ and $s$ be two trees which respective right comb representation and left comb representation are given above.
	We denote by $k$ (respectively $l$) the number of nodes on the right-most (respectively left-most) branch of $t$ (respectively $s$). Let $\sigma$ be a $(k,l)$-quasi-shuffle which has for image $\IEM{1}{n}$ for $n$ a positive integer. We denote $\sigma(t,s)$ the tree obtained this way:
	\begin{enumerate}\itemsep=0pt
		\item We first consider the ladder with $n$ nodes:
		\[
			\raisebox{-0.5\height}{\begin{tikzpicture}[line cap=round,line join=round,>=triangle 45,x=0.3cm,y=0.3cm]
				\begin{scope}
					\draw (0,0)--(0,2.5);
					\draw[dashed] (0,2.5)--(0,5);
					\draw (0,5)--(0,6);
					\filldraw [black] (0,1) circle (2pt) node[anchor=west]{Node $1$};
					\filldraw [black] (0,2) circle (2pt) node[anchor=west]{Node $2$};
					\filldraw [black] (0,5) circle (2pt) node[anchor=west]{Node $n$};
				\end{scope}
			\end{tikzpicture}}
		\]
		\item For all $i\in\IEM{1}{k},$ we graft $F_i$ as the \emph{left} son at the node $\sigma(i)$.
		\item For all $i\in\IEM{k+1}{k+l},$ we graft $F_i$ as \emph{right} son at the node $\sigma(i)$.
	\end{enumerate}
\end{defi}

\begin{Eg}\label{Eg:quasiaction}
	Consider the $(2,2)$-quasi-shuffle $\sigma=(1,3,2,3)$. Let us take
\[
t= \raisebox{-0.3\height}{\begin{tikzpicture}[line cap=round,line join=round,>=triangle 45,x=0.2cm,y=0.2cm]
			\draw (0,0)--(0,1);
			\draw (0,1)--(-1,2) node[left]{$F_1$};
			\draw (0,1)--(2,3);
			\draw (1,2)--(0,3) node[left,above]{$F_2$};
	\end{tikzpicture}} \qquad \text{and}\qquad s= \raisebox{-0.3\height}{\begin{tikzpicture}[line cap=round,line join=round,>=triangle 45,x=0.2cm,y=0.2cm]
			\draw (0,0)--(0,1);
			\draw (0,1)--(1,2) node[right]{$F_{3}$};
			\draw (0,1)--(-2,3);
			\draw (-1,2)--(0,3) node[right,above]{$F_{4}$};
	\end{tikzpicture}},\qquad \text{then}\qquad \sigma(t,s)=\raisebox{-0.5\height}{\begin{tikzpicture}[line cap=round,line join=round,>=triangle 45,x=0.3cm,y=0.3cm]
			\draw (0,0)--(0,4);
			\draw (0,1)--(-1,2) node[left]{$F_1$};
			\draw (0,2)--(1,3) node[right]{$F_3$};
			\draw (0,3)--(-1,4) node[left]{$F_2$};
			\draw (0,3)--(1,4) node[right]{$F_{4_{\vphantom{\big|}}}$};
	\end{tikzpicture}}.
\]
\end{Eg}

\begin{Rq}\label{Rq:multprod}
		For multiple generators, we extend the definition of this product naturally over~$\mathcal{A}(X)$ except when a node of this ladder has left and right sons. In this case, the decoration of this node is the concatenation of the decorations of the roots of the forests grafted at this node.	
		\begin{Eg}
			Let $a,b\in\mathcal{D}$, then
			\[
			(1,1)\big(\Ydec{a}, \Ydec{b}\big)={\raisebox{-0.3\height}{\begin{tikzpicture}[line cap=round,line join=round,>=triangle 45,x=0.3cm,y=0.3cm]
						\draw (0,0)--(0,2);
						\draw (0,1)-- (-1,2);
						\draw (0,1)-- (1,2);
						\draw (0,1) node[left] {\footnotesize{$ab$}};
				\end{tikzpicture}}}.
			\]
		\end{Eg}
\end{Rq}

\begin{Not}
	 Given $F=t_1\dots t_n$ a forest, we define
	\begin{gather*}
		\nf(F)=\sum_{i=1}^{n}\nf(t_i), \qquad \deg(F)=\sum_{i=1}^{n}\deg(t_i).
	\end{gather*}
\end{Not}
\begin{Rq}\label{battage}
	For all trees $t$, $s$ of $\mathcal{A}$ and for all $\sigma,\gamma\in\batc(k,l)$, we have
	\begin{equation*}
		\sigma(t,s)= \gamma(t,s) \iff \sigma= \gamma.
	\end{equation*}
\end{Rq}
\begin{proof}
	The ladder used to build $\sigma(t,s)$ and $\gamma(t,s)$ is a tree along the path going from the root to the $\nf(t)$-th leaf. Comparing right and left sons of each node of the ladder we see $\sigma= \gamma.$
\end{proof}

\subsubsection{Description of the product}

Let $t$, $s$ be two trees different from $|$. We will suppose that our trees are written as follows:
\begin{align*}
	&t=\peignedroit{1}{2}{k} \qquad \text{and} \qquad s=	\peignegauche{k+1}{k+2}{k+l}
\end{align*}
\begin{Not}
	Let $F=t_1\dots t_k$ be a forest and $t$ a tree. We denote by $F\vee t$ the following tree:
	\[
	t_1\vee \dots \vee t_k\vee t.
	\]
\end{Not}

\begin{thm}\label{produit}
	Let $t$, $s$ be two trees different from $|$ as described above. Then
	\[
	t*s=\sum_{\sigma\in \batc(k,l)}\sigma(t,s).
	\]
\end{thm}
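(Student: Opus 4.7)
The plan is to proceed by induction on $k+l$ and, in parallel, to establish the three refined formulas
\begin{align*}
t \prec s &= \sum_{\sigma \in C_\prec} \sigma(t,s), \\
t \cdot s &= \sum_{\sigma \in C_\cdot} \sigma(t,s), \\
t \succ s &= \sum_{\sigma \in C_\succ} \sigma(t,s),
\end{align*}
where $C_\prec$, $C_\cdot$, $C_\succ$ are the subsets of $\batc(k,l)$ defined respectively by $\{\sigma(1)=1,\ \sigma(k+1)>1\}$, $\{\sigma(1)=\sigma(k+1)=1\}$, and $\{\sigma(1)>1,\ \sigma(k+1)=1\}$. Since $\sigma^{-1}(\{1\})$ is always nonempty and contained in $\{1,k+1\}$ (by the two strict-increase conditions imposed on a quasi-shuffle), these three classes partition $\batc(k,l)$, and summing the three refined formulas recovers the theorem.

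For the base case $k=l=1$, one has $t=F_1\vee |$ and $s=|\vee F_2$, and there are exactly three $(1,1)$-quasi-shuffles, namely $(1,2),(2,1),(1,1)$, one in each class. Using the inductive definitions of $\prec,\cdot,\succ$ together with the identities $|*u=u=u*|$ (which follow from the boundary rules $|\prec u=0=|\cdot u,\ |\succ u=u$, and symmetrically), a direct computation confirms each of the three partial formulas.

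For the inductive step with $k+l\geq 3$, I would decompose $t=F_1\vee t'$, where $t'=|$ if $k=1$ and otherwise $t'$ is the right comb with forests $F_2,\ldots,F_k$, and symmetrically $s=s'\vee F_{k+1}$. The inductive definitions of the three partial products then yield
\[
t\prec s=F_1\vee (t'*s),\qquad t\succ s=(t*s')\vee F_{k+1},\qquad t\cdot s=F_1\vee (t'*s')\vee F_{k+1}.
\]
Applying the induction hypothesis to $t'*s$, $t*s'$, and $t'*s'$ (or, in the degenerate cases $t'=|$ or $s'=|$, the identity $|*u=u$) expands each inner $*$-product as a sum over quasi-shuffles in $\batc(k-1,l)$, $\batc(k,l-1)$, or $\batc(k-1,l-1)$. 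The combinatorial key is the bijection $\batc(k-1,l)\to C_\prec$ which prepends a new bottom node carrying only $F_1$: formally, it sends $\sigma'\in\batc(k-1,l)$ to the $\sigma\in\batc(k,l)$ defined by $\sigma(1)=1$ and $\sigma(i)=\sigma'(i-1)+1$ for $i\geq 2$. Under this bijection, $\sigma'(t',s)$ is exactly the subtree sitting above the new bottom node of $\sigma(t,s)$, which gives the tree identity $F_1\vee \sigma'(t',s)=\sigma(t,s)$; the analogous bijections $\batc(k,l-1)\to C_\succ$ and $\batc(k-1,l-1)\to C_\cdot$ yield the other two partial formulas.

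The main obstacle will be matching each bijection against the tree-grafting prescription of Definition~\ref{def:quasiaction} and verifying the resulting tree identities, particularly at the degenerate boundary ($t'=|$ or $s'=|$) where the induction hypothesis is unavailable; in those cases the relevant class of quasi-shuffles collapses to a small explicit set, whose action on $(t,s)$ must be computed directly and matched against $F_1\vee s$, $t\vee F_{k+1}$, or $F_1\vee s'\vee F_{k+1}$ as appropriate. Once the three partial formulas are established, summation over the partition $\batc(k,l)=C_\prec\sqcup C_\cdot\sqcup C_\succ$ gives $t*s=\sum_{\sigma\in\batc(k,l)}\sigma(t,s)$.
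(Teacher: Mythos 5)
Your proposal is correct and follows essentially the same route as the paper: induction on $k+l$, the decomposition $t\prec s=F_1\vee(t'*s)$, $t\succ s=(t*s')\vee F_{k+1}$, $t\cdot s=F_1\vee(t'*s')\vee F_{k+1}$, and the three shift/prepend bijections onto the classes of $\batc(k,l)$ determined by $\sigma^{-1}(1)$, whose union over the partition gives the theorem (the paper extracts the three refined formulas as Corollary \ref{Cor:3prodescription}). Your explicit attention to the degenerate boundary $t'=|$ or $s'=|$ and to a base case with general forests $F_1,F_2$ is a welcome refinement of details the paper's write-up glosses over.
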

\begin{proof}
	We prove this theorem by induction on the sum of $k$ and $l$. As $t,s\neq |$, $k,l\in\N\setminus\{0\}$.
	To initialize the induction, the only case to check is $k+l=2$, that is to say $k=1$ and $l=1$. Let us remind that $T_1=\big\lbrace\Y \big\rbrace$ and $\batc(1,1)=\{(1,1),(1,2),(2,1)\}$. Then
	\begin{align*}
		&(1,1)\big(\Y,\Y\big)+(2,1)\big(\Y,\Y\big)+(1,2)\big(\Y,\Y\big)\\
&\qquad=\raisebox{-0.3\height}{\begin{tikzpicture}[line cap=round,line join=round,>=triangle 45,x=0.2cm,y=0.2cm]
				\draw (0,0)--(0,2);
				\draw (0,1)--(1,2);
				\draw (0,1)--(-1,2);
		\end{tikzpicture}} +
		\raisebox{-0.3\height}{\begin{tikzpicture}[line cap=round,line join=round,>=triangle 45,x=0.2cm,y=0.2cm]
				\draw (0,0)--(0,2);
				\draw (0,0.66)--(1,1.66);
				\draw (0,1.33)--(-1,2);
		\end{tikzpicture}}+
		\raisebox{-0.3\height}{\begin{tikzpicture}[line cap=round,line join=round,>=triangle 45,x=0.2cm,y=0.2cm]
				\draw (0,0)--(0,2);
				\draw (0,0.66)--(-1,1.66);
				\draw (0,1.33)--(1,2);
		\end{tikzpicture}}
		=\raisebox{-0.3\height}{\begin{tikzpicture}[line cap=round,line join=round,>=triangle 45,x=0.2cm,y=0.2cm]
				\draw (0,0)--(0,2);
				\draw (0,1)--(1,2);
				\draw (0,1)--(-1,2);
		\end{tikzpicture}}+
		\raisebox{-0.3\height}{\begin{tikzpicture}[line cap=round,line join=round,>=triangle 45,x=0.2cm,y=0.2cm]
				\draw (0,0)--(0,1);
				\draw (0,1)--(-1,2);
				\draw (0,1)--(1,2);
				\draw (-0.5,1.5)--(0,2);
		\end{tikzpicture}} +
		\raisebox{-0.3\height}{\begin{tikzpicture}[line cap=round,line join=round,>=triangle 45,x=0.2cm,y=0.2cm]
				\draw (0,0)--(0,1);
				\draw (0,1)--(-1,2);
				\draw (0,1)--(1,2);
				\draw (0.5,1.5)--(0,2);
		\end{tikzpicture}}
	=\Y*\Y.
	\end{align*}
	Secondly, we check the heredity. Let $(k,l)\in\N\setminus\{0\}$ such that $k+l\geq 2$. Let us suppose that for all couples $(k',l')\in\N\setminus\{0\}$ such that $k'+l'<k+l$, we have the theorem. Then
	\allowdisplaybreaks[4]
	\begin{align*}
t*s={}&t\prec s+t\cdot s+t\succ s \\
={}&F_1 \vee \left( \peignedroit{2}{3}{k} * \peignegauche{k+1}{k+2}{k+l} \right) \\
&{}+\left( \peignedroit{1}{2}{k}*\peignegauche{k+2}{k+3}{k+l} \right)\vee F_{k+1} \\
&{}+F_1\vee \left( \peignedroit{2}{3}{k}* \peignegauche{k+2}{k+3}{k+l} \right)\vee F_{k+1}		\\
={}&F_1\vee \left(\sum_{\tau\in\batc(k-1,l)} \tau\left(\peignedroit{2}{3}{k},\peignegauche{k+1}{k+2}{k+l}\right)\right) \\
&{}+F_1\vee \left( \sum_{\gamma\in \batc(k-1,l-1)} \gamma\left( \peignedroit{2}{3}{k},\peignegauche{k+2}{k+3}{k+l} \right)\right)\vee F_{k+1} \\
&{}+\left( \sum_{\delta\in \batc(k,l-1)}\delta\left(\peignedroit{1}{2}{k},\peignegauche{k+2}{k+3}{k+l}\right) \right)\vee F_{k+1} \\
={}&\sum_{\substack{\tau\in \batc(k-1,l) \\ \sigma_1=(1,\tau(1)+1,\dots,\tau(k+l-1)+1)}}\sigma_1(t,s) \\
&{}+\sum_{\substack{\gamma\in \batc(k-1,l-1) \\ \sigma_2=(1,\gamma(1)+1,\dots,\gamma(k-1)+1,1,\gamma(k)+1,\dots,\gamma(k+l-2)+1)}}\sigma_2(t,s) \\
&{}+\sum_{\substack{\delta\in \batc(k,l-1) \\ \sigma_3=(\delta(1)+1,\dots,\delta(k)+1,1,\delta(k+1)+1,\dots,\delta(k+l-1)+1)}}\sigma_3(t,s).
\end{align*}
Therefore,
\begin{align*}
t*s={}&\sum_{\substack{\sigma_1\in\batc(k,l) \\ \sigma_1^{-1}(\{1\})=\{1\}}}\sigma_1(t,s) +\sum_{\substack{\sigma_2\in\batc(k,l) \\ \sigma_2^{-1}(\{1\})=\{1,k+1\}}}\sigma_2(t,s)+ \sum_{\substack{\sigma_1\in\batc(k,l) \\ \sigma_3^{-1}(\{1\})=\{k+1\}}}\sigma_3(t,s)\\
={}& \sum_{\sigma\in \batc(k,l)} \sigma(t,s). \tag*{\qed}
	\end{align*}
\renewcommand{\qed}{}
\allowdisplaybreaks[1]
\end{proof}

Thanks to the previous theorem, we get
\begin{Cor} \label{Cor:3prodescription}
	Let $t$, $s$ be two trees different from $|$. Then
	\begin{gather*}
		t\prec s=\sum_{\substack{\sigma\in\batc(k,l) \\ \sigma^{-1}(1)=\{1\}}} \sigma(t,s),
		\qquad t\cdot s=\sum_{\substack{\sigma\in\batc(k,l) \\ \sigma^{-1}(1)=\{1,k+1\}}} \sigma(t,s),
		\qquad t\succ s=\sum_{\substack{\sigma\in\batc(k,l) \\ \sigma^{-1}(1)=\{k+1\}}} \sigma(t,s).
	\end{gather*}
\end{Cor}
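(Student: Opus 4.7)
My plan is to derive the corollary directly from Theorem \ref{produit} together with the decomposition $*=\prec+\cdot+\succ$, by partitioning $\batc(k,l)$ according to the value of $\sigma^{-1}(1)$ and identifying each piece with one of the three products via the inductive formulas for $\prec,\cdot,\succ$.

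First I would observe that for any $\sigma\in\batc(k,l)$ the preimage $\sigma^{-1}(1)$ is necessarily one of $\{1\}$, $\{k+1\}$, or $\{1,k+1\}$. Indeed, surjectivity forces $1$ to be attained, while the strict monotonicity conditions $\sigma(1)<\cdots<\sigma(k)$ and $\sigma(k+1)<\cdots<\sigma(k+l)$ imply that if $\sigma(i)=1$ with $i\le k$ then $i=1$, and if $\sigma(i)=1$ with $i>k$ then $i=k+1$. This gives a natural partition of $\batc(k,l)$ into three subsets, and by Theorem \ref{produit} the sum $t*s=\sum_{\sigma}\sigma(t,s)$ splits accordingly into three sums $S_{\{1\}}, S_{\{k+1\}}, S_{\{1,k+1\}}$. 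It then suffices to show that these three sums coincide respectively with $t\prec s$, $t\succ s$ and $t\cdot s$.

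Next, I would proceed by induction on $k+l$, mirroring the inductive computation already performed in the proof of Theorem \ref{produit}. Writing $t=F_1\vee\mathrm{rest}(t)$ and $s=\mathrm{leftrest}(s)\vee F_{k+1}$ and applying the inductive definitions of the three products, one obtains
\begin{align*}
 t\prec s&=F_1\vee(\mathrm{rest}(t)*s),\\
 t\cdot s&=F_1\vee(\mathrm{rest}(t)*\mathrm{leftrest}(s))\vee F_{k+1},\\
 t\succ s&=(t*\mathrm{leftrest}(s))\vee F_{k+1}.
\end{align*}
Applying the induction hypothesis (or Theorem \ref{produit} directly) to each inner $*$-product and then re-reading the resulting trees as $\sigma(t,s)$ for suitable $(k,l)$-quasi-shuffles $\sigma$ gives the desired identification. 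Concretely, a shuffle $\tau\in\batc(k-1,l)$ appearing in $\mathrm{rest}(t)*s$ extends by $\sigma(1)=1$, $\sigma(i)=\tau(i-1)+1$ for $i\ge 2$, to a shuffle in $\batc(k,l)$ with $\sigma^{-1}(1)=\{1\}$; this bijection identifies $t\prec s$ with $S_{\{1\}}$. The analogous extensions handle the $\cdot$ and $\succ$ cases, yielding $\sigma^{-1}(1)=\{1,k+1\}$ and $\sigma^{-1}(1)=\{k+1\}$ respectively.

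The bulk of the work is really already contained in the proof of Theorem \ref{produit}, where the three subsums $\sigma_1, \sigma_2, \sigma_3$ have exactly the prescribed shape. The only real obstacle is the bookkeeping: verifying that the extensions $\tau\mapsto\sigma$ described above are indeed bijections between $\batc(k-1,l)$, $\batc(k-1,l-1)$, $\batc(k,l-1)$ and the corresponding subsets of $\batc(k,l)$, and that the resulting trees $\sigma(t,s)$ coincide with the ones produced by Definition \ref{def:quasiaction} — a check that is routine given Remark \ref{battage}, which guarantees injectivity of $\sigma\mapsto\sigma(t,s)$ so no cancellations occur among the three pieces.
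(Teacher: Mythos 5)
Your proposal is correct and matches the paper's approach: the paper's proof of this corollary simply refers back to the proof of Theorem \ref{produit}, where the three subsums indexed by $\sigma_1,\sigma_2,\sigma_3$ are exactly the quasi-shuffles with $\sigma^{-1}(1)$ equal to $\{1\}$, $\{1,k+1\}$ and $\{k+1\}$ respectively, arising from $t\prec s$, $t\cdot s$ and $t\succ s$. Your additional bookkeeping (the partition of $\batc(k,l)$ by $\sigma^{-1}(1)$ and the appeal to Remark \ref{battage}) just makes explicit what the paper leaves implicit.
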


\begin{proof}
	We just refer to the previous proof.
\end{proof}

\begin{Rq}
	In~\cite{Tridend+co}, the authors describe the tridendriform structure over Schr\"oder trees with \emph{stuffle paths} in Section 7.1.2. A stuffle path is a sequence $\left(\sigma^{-1}(\{i\})\right)_{i\in \IEM{1}{n}}$ where $\sigma$ is a $(k,l)$-quasi-shuffle and $n=\max(\sigma).$ As the set of quasi-shuffles is in bijection with stuffle paths, it gives an equivalent description of the tridendriform structure. But it is still defined inductively.
\end{Rq}
\begin{Rq}
		For many generators, we extend these definitions to decorated trees thanks to Lemma~\ref{Rq:multprod}.
\end{Rq}
Some quasi-shuffle algebras are studied in~\cite{Quasi-shuffle} nearby Proposition $4$. In the case of quasi-shuffle algebras over words, the tridendriform structure is given by shuffling letters. Otherwise, we can also define a tridendriform structure looking at from which word comes the last letter (and not the first). If we do so with Schr\"oder trees, the relation~\eqref{tri1} is not satisfied.

\subsection[(3,1)-dendriform bialgebras quotients]{$\boldsymbol{(3,1)}$-dendriform bialgebras quotients}
\begin{defi}
	Let $(H,\prec,\cdot,\succ,1,\Delta,\varepsilon)$ be $(3,1)$-dendriform bialgebra. We say that $I$ is a~$(3,1)$-dendriform biideal if $I$ is a tridendriform ideal and $I$ is a coideal of $H$.
\end{defi}
With these definitions, it follows:
\begin{Prop}
	Let $(H,\prec,\cdot,\succ,1,\Delta,\varepsilon)$ be a $(3,1)$-dendriform bialgebra. Let $I$ be a $(3,1)$-dendriform biideal. Then $\faktor{H}{I}$ with the quotient structure is a $(3,1)$-dendriform bialgebra.
\end{Prop}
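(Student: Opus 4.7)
My plan is to establish the three constituent structures on $\faktor{H}{I}$ one by one, showing in each case that the structure on $H$ descends to the quotient, and then verifying that the compatibilities between them also descend.

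First, denote the quotient map $\pi: H \to \faktor{H}{I}$. Since $I$ is a coideal, $\varepsilon(I)=0$ and therefore $I \subset H^+$; in particular $I$ is a tridendriform ideal of $H$ contained in the augmentation ideal. The previous proposition (and the remark following it) then yields an augmented tridendriform algebra structure $(\faktor{H}{I}, \prec, \cdot, \succ)$ for which $\pi$ is an augmented tridendriform algebra morphism, with the associative product $*$ on the quotient equal to $\prec+\cdot+\succ$.

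Second, because $I$ is a coideal of the coalgebra $(H,\Delta,\varepsilon)$, standard arguments produce a coproduct $\bar\Delta$ and counit $\bar\varepsilon$ on $\faktor{H}{I}$ satisfying $\bar\Delta\circ\pi=(\pi\otimes\pi)\circ\Delta$ and $\bar\varepsilon\circ\pi=\varepsilon$. Since $I$ is moreover a tridendriform ideal, applying $\pi\otimes\pi$ to the identity $\Delta(x*y)=\Delta(x)*\Delta(y)$ (valid in $H$) shows that $(\faktor{H}{I},*,\bar 1,\bar\Delta,\bar\varepsilon)$ is a bialgebra.

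Third, the delicate point is the three tridendriform-coproduct compatibilities on the quotient. For $\bar x,\bar y\in (\faktor{H}{I})^+$ with representatives $x,y\in H^+$ and $\ltimes\in\{\prec,\cdot,\succ\}$, I would compute
\[
\bar\Delta(\bar x\ltimes\bar y)=(\pi\otimes\pi)\circ\Delta(x\ltimes y)=(\pi\otimes\pi)\bigl(\Delta(x)\ltimes\Delta(y)\bigr),
\]
where the second equality uses the compatibility in $H$. To identify the right-hand side with $\bar\Delta(\bar x)\ltimes\bar\Delta(\bar y)$, I invoke Lemma \ref{Lemme:morphtridend} applied to the augmented tridendriform algebra morphism $\pi: H\to \faktor{H}{I}$: this lemma guarantees that $\pi\otimes\pi$ is an augmented tridendriform algebra morphism from $H\,\overline{\otimes}\, H$ to $\faktor{H}{I}\,\overline{\otimes}\,\faktor{H}{I}$, and in particular commutes with each of the three operations $\prec,\cdot,\succ$ defined on these tensor products in section \ref{sec:Aug}.

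The main obstacle I anticipate is exactly this last step, because the tensor product on which the compatibilities live is the augmented one $\overline{\otimes}$ rather than the naive tensor product, so one must not verify the compatibility elementwise but rather invoke the morphism lemma to handle all tensor-product pieces uniformly (including the pieces where a factor is a unit). Once this is done, the bialgebra axioms and tridendriform compatibilities all descend, and $\faktor{H}{I}$ is a $(3,1)$-dendriform bialgebra.
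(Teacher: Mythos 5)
Your proof is correct. The paper states this proposition with no proof at all, treating it as an immediate consequence of the preceding definitions; your argument --- quotient augmented tridendriform structure from the preceding proposition and its remark, quotient coalgebra structure from the coideal property, and descent of the three compatibilities $\Delta(x\ltimes y)=\Delta(x)\ltimes\Delta(y)$ by applying Lemma \ref{Lemme:morphtridend} to $\pi\otimes\pi$ on the augmented tensor product --- is precisely the verification the paper leaves implicit, and you correctly single out the augmented tensor product $\overline{\otimes}$ (rather than the naive one) as the only delicate point.
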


\section[Graded dual of A]{Graded dual of $\boldsymbol{\mathcal{A}}$} \label{sec4}
\subsection{Tridendriform coalgebra}
We consider $(\mathcal{A},*,|,\Delta,\varepsilon)$ as a graded and connected Hopf algebra (see Definitions~\ref{graded} and~\ref{connected}).
We will now describe the graded dual of $\mathcal{A}$, denoted by $\mathcal{A}^{\circledast}$. Let us consider the bilinear map
\begin{gather*}
	\langle \cdot,\cdot \rangle \colon \
		\mathcal{A}\times \mathcal{A} \rightarrow \K,
\end{gather*}
 defined by $\left\langle t,s \right\rangle=\delta_{t,s}$ for $t$, $s$ two trees. Through this pairing, we identify $\mathcal{A}^{\circledast}$ with $\mathcal{A}$ as vector spaces with
\begin{equation}\label{eq:pairing}
	\Phi\colon \ \begin{cases}
		\mathcal{A} \rightarrow \mathcal{A}^{\circledast}, \\
		a \mapsto \begin{cases}
			\mathcal{A} \rightarrow \K, \\
			b \mapsto \langle a,b \rangle.
		\end{cases}
	\end{cases}
\end{equation}
Let us put for all $\ltimes\in\{\prec,\cdot,\succ,*\}$, $\Delta_{\ltimes}={\ltimes}^{\circledast}.$
In order to be readable, we will write $\Delta_{\bullet}$ instead of $\Delta_{\cdot}$. Moreover, we will write $\Delta_*=\Delta$. Then in ${\mathcal{A}^{\circledast}}^+$, we have
\[
\Delta=\Delta_{\prec}+\Delta_{\bullet}+\Delta_{\succ}.
\]
Moreover, for all $a\otimes b\in \mathcal{A}\overline{\otimes} \mathcal{A}$ and $t\in \mathcal{A}\simeq \mathcal{A}^{\circledast}$:
\[
 \langle \Delta_{\ltimes}(t),a\otimes b \rangle= \langle t,a\ltimes b \rangle.
\]
By duality, choosing all the quadruples $(\ltimes,\rtimes,\ltimes',\rtimes')$ elements of~\eqref{quadru} and using the relations~\eqref{tri1}--\eqref{tri7}, we get in ${\mathcal{A}^{\circledast}}^+$
\begin{gather}
	({\rm Id} \otimes \Delta )\circ \Delta_{\prec} =(\Delta_{\prec}\otimes {\rm Id})\circ \Delta_{\prec}, \label{cotri1}\\
	({\rm Id} \otimes \Delta_{\prec} )\circ \Delta_{\succ} =(\Delta_{\succ}\otimes {\rm Id})\circ \Delta_{\prec}, \label{cotri2} \\
	({\rm Id} \otimes \Delta_{\succ} )\circ \Delta_{\succ} =(\Delta\otimes {\rm Id})\circ \Delta_{\succ}, \label{cotri3} \\
	({\rm Id} \otimes \Delta_{\bullet} )\circ \Delta_{\succ} =(\Delta_{\succ}\otimes {\rm Id})\circ \Delta_{\bullet}, \label{cotri4}\\
	({\rm Id} \otimes \Delta_{\succ} )\circ \Delta_{\bullet} =(\Delta_{\prec}\otimes {\rm Id})\circ \Delta_{\bullet}, \label{cotri5}\\
	({\rm Id} \otimes \Delta_{\prec} )\circ \Delta_{\bullet} =(\Delta_{\bullet}\otimes {\rm Id})\circ \Delta_{\prec}, \label{cotri6}\\
	({\rm Id} \otimes \Delta_{\bullet} )\circ \Delta_{\bullet} =(\Delta_{\bullet}\otimes {\rm Id})\circ \Delta_{\bullet}. \label{cotri7}
\end{gather}

\begin{Rq}
Each map described above are maps from ${\mathcal{A}^{\circledast}}^+$ into a space where each term is well defined according to the Remark~\ref{Rq:relations+}.
\end{Rq}

\begin{Eg}
	We have
	\begin{gather*}
		({\rm Id}\otimes \Delta)\circ \Delta_{\prec}\big(\Y\big)=({\rm Id}\otimes \Delta)\big(\Y\otimes |\big)=\Y\otimes |\otimes |, \\
		(\Delta_{\prec}\otimes {\rm Id})\circ \Delta_{\prec}\big(\Y\big)=(\Delta_{\prec}\otimes {\rm Id})\big(\Y\otimes |\big)=\Y\otimes |\otimes |.
	\end{gather*}
\end{Eg}

Then we can break our coproduct $\Delta$ over ${\mathcal{A}^{\circledast}}^+$ in three pieces such that the relations~\eqref{cotri1}--\eqref{cotri7} are verified. In the following, we define as in~\cite[Definition 5.3]{Tridend+co}:
\begin{defi}
	A \emph{tridendriform coalgebra} is a coalgebra $\big(C,\tilde{\Delta}\big)$, where there exist three maps
	\begin{gather*}
		\tilde{\Delta}_{\prec}\colon \ C\rightarrow C\otimes C, \qquad \Delta_{\bullet}\colon \ C\rightarrow C\otimes C, \qquad \tilde{\Delta}_{\succ}\colon \ C\rightarrow C\otimes C,
	\end{gather*}
	such that the relations \eqref{cotri1}--\eqref{cotri7} are true for all $(x,y,z)\in C^3$ replacing each $\Delta_{\ltimes}$ by $\tilde{\Delta}_{\ltimes}$ for $\ltimes\in\{\prec,\cdot,\succ,*\}$. We also have $\tilde{\Delta}=\tilde{\Delta}_{\prec}+\Delta_{\bullet}+\tilde{\Delta}_{\succ}.$
\end{defi}

\begin{defi}
	Let $\bigl(C,\tilde{\Delta}_{\prec},\Delta_{\bullet},\tilde{\Delta}_{\succ}\bigr)$ be a tridendriform coalgebra.
	The associated \emph{augmented tridendriform coalgebra} is the vector space $\overline{C}=1_C\cdot\K\oplus C$ where we expand our coproduct $\tilde{\Delta}$ as a coproduct $\Delta$ on $1_C\cdot\K$ by
	\[
	\Delta(1_C)=1_C\otimes 1_C.
	\]
	We also define for all $x\in C$
	\begin{gather*}
		\Delta(x)=\tilde{\Delta}(x)+1_C\otimes x+x\otimes 1_C, \\
		\Delta_{\prec}(x)=\tilde{\Delta}_{\prec}(x)+x\otimes 1_C, \\
		\Delta_{\succ}(x)=\tilde{\Delta}_{\succ}(x)+1_C\otimes x.
	\end{gather*}	
As a consequence, $\Delta=\Delta_{\prec}+\Delta_{\bullet}+\Delta_{\succ}.$
Note that we do not define $\Delta_{\prec}(1_C)$, neither $\Delta_{\bullet}(1_C)$, nor~$\Delta_{\succ}(1_C)$. As a consequence of these definitions, the relations \eqref{cotri1}--\eqref{cotri7} are satisfied over $C$.
\end{defi}

\subsection[(1,3)-dendriform bialgebras]{$\boldsymbol{(1,3)}$-dendriform bialgebras}

We will study the compatibilities between the product $m$ of $\mathcal{A}^{\circledast}$ and the coproducts $\Delta_{\prec}$, $\Delta_{\bullet}$ and~$\Delta_{\succ}$ of $\mathcal{A}^{\circledast}$. We already know that $\Delta \circ m=m_{\mathcal{A}^{\circledast}\otimes \mathcal{A}^{\circledast}}\circ (\Delta \otimes \Delta)$. Let $f,g\in{\mathcal{A}^{\circledast}}^+$ and $u,v\in \mathcal{A}^+$. Then $f(|)=g(|)=0$. Let $\ltimes\in\{\prec,\cdot,\succ\}$.
Thanks to our construction of $\Delta$ preserving the tridendriform structure, we get
\begin{align*}
	& \langle \Delta_{\ltimes}\circ m(f\otimes g),u\otimes v \rangle= \langle m(f\otimes g), u\ltimes v \rangle
	= \langle f\otimes g,\Delta(u\ltimes v) \rangle
	= \langle f\otimes g,\Delta(u)\ltimes \Delta(v) \rangle.
\end{align*}
For $x\in \mathcal{A}$, we also denote $\tilde{\Delta}(x)=x'\otimes x'',$ omitting the summation symbol to shorten the writing.
Therefore, the results of the Lemma~\ref{cocalcul} can be written as
\begin{gather*}
\tilde{\Delta}(u\prec v)=v\otimes u+v'\otimes u\prec v''+u'*v\otimes u''+u'\otimes u''\prec v+u'*v'\otimes u''\prec v'', \\
\tilde{\Delta}(u\succ v)=u\otimes v+u'*v'\otimes u''\succ v'' +u'\otimes u''\succ v+u*v'\otimes v''+v'\otimes u\succ v'',\\
\tilde{\Delta}(u\cdot v)=u'*v'\otimes u''\cdot v'' +u'\otimes u''\cdot v + v'\otimes u\cdot v''.
\end{gather*}

\begin{Not}
	 For all $\ltimes\in\{\prec,\cdot,\succ\}$ and for all $x\in \mathcal{A}^{\circledast}$, we denote
\begin{align*}
	\Delta_{\ltimes}(x)=x'_{\ltimes}\otimes x''_{\ltimes}
\end{align*}
omitting the sum to shorten the writing. This is the \emph{generalized Sweedler's notation}.
\end{Not}
\begin{Rq}
	For all $f\in \mathcal{A}^{\circledast}$ and for all $u,v\in \mathcal{A}$,
	\[
	 \langle f,u\ltimes v \rangle=f(u\ltimes v)=\Delta_{\ltimes}(f)(u\otimes v)=f'_{\ltimes}(u)f''_{\ltimes}(v).
	\]
\end{Rq}

From those equations, we get for any $f,g\in {\mathcal{A}^{\circledast}}^+$
\begin{gather}
	\Delta_{\prec}(fg)=g\otimes f+ g'_{\prec}\otimes fg''_{\prec}+f'_{\prec}g\otimes f''_{\prec}+f'g'_{\prec}\otimes f''g''_{\prec}, \label{compa1} \\
	\Delta_{\bullet}(fg)=f'g'_{\bullet}\otimes f''g''_{\bullet}+fg'_{\bullet}\otimes g''_{\bullet} + g''_{\bullet}\otimes fg'_{\bullet}, \label{compa2} \\
	\Delta_{\succ}(fg)=f\otimes g +f'g'_{\succ}\otimes f''g''_{\succ}+f'\otimes f''g+g'_{\succ}\otimes fg''_{\succ}. \label{compa3}	
\end{gather}
We detail how to obtain $\eqref{compa1}$. The other verifications are left to the reader,
\begin{gather*}
	\big\langle \Delta_{\prec}(fg),u\otimes v \big\rangle = \big\langle f\otimes g,\tilde\Delta(u\prec v) \big\rangle \\
	\qquad=\big\langle f\otimes g,v\otimes u+v'\otimes u\prec v''+u'*v\otimes u''+u'\otimes u''\prec v+u'*v'\otimes u''\prec v'' \big\rangle \\
	\qquad=\langle g\otimes f,u\otimes v \rangle+\big\langle f\otimes g'_{\prec}\otimes g''_{\prec}, v'\otimes u\otimes v'' \big\rangle+ \big\langle f'_{\prec}\otimes f''_{\prec}\otimes g,u'\otimes v\otimes u'' \big\rangle\\
	\qquad\quad+\big\langle f\otimes g'_{\prec} \otimes g''_{\prec},u'\otimes u''\otimes v \big\rangle+\big\langle f'\otimes f''\otimes g'_{\prec}\otimes g''_{\prec},u'\otimes v'\otimes u'' \otimes v'' \big\rangle \\
	\qquad= \big\langle g\otimes f+ g'_{\prec}\otimes fg''_{\prec}+f'_{\prec}g\otimes f''_{\prec}+f'g'_{\prec}\otimes f''g''_{\prec},u\otimes v \big\rangle.
\end{gather*}
So we get $\eqref{compa1}.$
This encourages us to establish the following definition:
\begin{defi}
	A \emph{$(1,3)$-dendriform bialgebra} is a family $(H,m,\eta,\Delta_{\prec},\Delta_{\bullet},\Delta_{\succ},\varepsilon)$
	such that
	\begin{itemize}\itemsep=0pt
		\item $(H,m,\eta)$ is a unitary algebra.
		\item $(H,\Delta_{\prec},\Delta_{\bullet},\Delta_{\succ})$ is an augmented tridendriform coalgebra and $\varepsilon$ is the counit of $(H,\Delta)$ where $\Delta=\Delta_{\prec}+\Delta_{\bullet}+\Delta_{\succ}.$
		\item The relations \eqref{compa1}, \eqref{compa2} and \eqref{compa3} are satisfied for all $f,g\in H.$
	\end{itemize}
\end{defi}
\begin{Eg}
	The bialgebra $(\mathcal{A}^{\circledast},m,\eta,\Delta,\varepsilon)$ with three maps $\Delta_{\prec}$, $\Delta_{\bullet}$ and $\Delta_{\succ}$ defined above is a $(1,3)$-dendriform bialgebra.
\end{Eg}
\begin{Rq}
	If $H$ is a graded $(3,1)$-dendriform bialgebra, then its graded dual $H^{\circledast}$ is a $(1,3)$-dendriform bialgebra. Moreover, if $H$ is a graded $(1,3)$-dendriform bialgebra, $H^{\circledast}$ is a $(3,1)$-dendriform bialgebra.
\end{Rq}

\subsection[Study of the coproduct of A\^{}\{circledast\}]{Study of the coproduct of $\boldsymbol{\mathcal{A}^{\circledast}}$}

\begin{defi}[lightning decomposition of a tree]
	Let $t$ be a tree and $i\in\IEM{1}{\nf(t)}$. We call the \emph{lightning decomposition} of $t$ on the leaf number $i$, the couple of trees obtained by hitting a~lightning on $t$ at the leaf number $i$ going along the path from that leaf to the root. This lightning splits the tree into two pieces and by contracting unnecessary nodes, we obtain an ordered pair of two trees $(t_1, t_2).$
\end{defi}
\begin{Eg}
\[
\raisebox{-0.3\height}{\begin{tikzpicture}[line cap=round,line join=round,>=triangle 45,x=0.3cm,y=0.3cm]
			\draw (0,0)--(0,1);
			\draw (0,1)--(-3,4);
			\draw (0,1)--(3,4);
			\draw (2,3)--(1,4);
			\draw (-2,3)--(-2,4);
			\draw (-2,3)--(-1,4);
			\draw (-2,3)--(-3,4);
			\draw (-1.9,4) node[above]{\textcolor{orange}{$\lightning$}};
			\draw[thick,orange] (-2,4)--(-2,3)--(0,1)--(0,0);
	\end{tikzpicture}} \rightarrow{\raisebox{-0.4\height}{\begin{tikzpicture}[line cap=round,line join=round,>=triangle 45,x=0.3cm,y=0.3cm]
				\draw (0,0)--(0,1);
				\draw (0,1)--(-3,4);
				\draw (0,1)--(3,4);
				\draw (2,3)--(1,4);
				\draw (-2,3)--(-2,4);
				\draw (-2,3)--(-1,4);
				\draw (-2,3)--(-3,4);
				\draw[thick,orange,double] (-2,4)--(-2,3)--(0,1)--(0,0);
				\filldraw[color=white] (1,0) rectangle (-1,-0.1);
				\filldraw[color=white] (-2.2,4) rectangle (-1.8,4.2);
		\end{tikzpicture}} \rightarrow} \begin{tikzpicture}
		$ \left( \Y,\YY \right)$
	\end{tikzpicture}
\]
\end{Eg}
This notion arises from~\cite[Definition 2.2 and Example 2.3]{Eclair}, where a complete bialgebra called $\TSym$ is built with a coproduct based on the lightning decomposition of a tree. It also appears in~\cite{Tridend+co} where lightings are called \emph{cutting paths} in Section 7.1.1.
\begin{Not}
	Let $t$ be a tree and $m$ be an element of $\IEM{1}{\nf(t)}$, we denote by $({^mt},t^m)$ the lightning decomposition of $t$ on the leaf number $m$.
\end{Not}

Let $t$ be a tree of $\mathcal{A}$. Let us consider it as an element of $\mathcal{A}^{\circledast}$ through the pairing~\eqref{eq:pairing}. Let~$u$,~$v$ be trees of $\mathcal{A}$ and we denote
\begin{align*}
	& u=\peignedroit{1}{2}{k} \qquad \text{and} \qquad v=\peignegauche{k+1}{k+2}{k+l}
\end{align*}
Then
\begin{align*}
	\left\langle \Delta(t),u\otimes v \right\rangle &=\left\langle t, u*v \right\rangle
	= \sum_{\sigma\in \batc(k,l)} \left\langle t,\sigma(u,v) \right\rangle \\
	&=\begin{cases}
	1 &\text{if } \exists \sigma\in\batc(k,l) \text{ such that } t=\sigma(u,v), \\
	0 &\text{else}.
	\end{cases}
\end{align*}

\begin{Rq}
	By Remark $\ref{battage}$, if such a quasi-shuffle exists, it is necessarily unique.
\end{Rq}
As a consequence, if this quantity is non-zero, this implies that $t$ has the following form:
\[
	\begin{tikzpicture}[line cap=round,line join=round,>=triangle 45,x=0.3cm,y=0.3cm]
	\draw (0,0)--(0,2.5);
	\draw[dashed] (0,2.5)--(0,5);
	\draw (0,5)--(0,6);
	\filldraw [black] (0,1) circle (2pt);
	\filldraw [black] (0,2) circle (2pt);
	\filldraw [black] (0,5) circle (2pt);
	\draw (-10,3) node{$\substack{\text{the forests } F_1,\dots,F_k \text{ are ordered,} \\ \text{ from below to top}}$};
	\draw (10,3) node{$\substack{\text{the forests } F_{k+1},\dots,F_{k+l} \text{ are ordered,} \\ \text{from below to top}}$};
	\end{tikzpicture}
\]

If we use a lightning on the $m$-th leaf of the tree $t$ with $m=\nf(u)$, that is the ladder that we see on the drawing above, we find
\begin{equation}
	^mt=u \qquad \text{and} \qquad t^m=v.	\label{lienclair}
\end{equation}
 Moreover, all the elements of the form $\sigma(u,v)$ for a certain $\sigma\in\batc(k,l)$, satisfy
\[
 ^m\sigma(u,v)=u \qquad \text{and} \qquad \sigma(u,v)^m=v,
\]
where $m=\nf(u).$
And these trees are the only ones with this property. In fact, let us consider $t$ a tree and $m\in\N\setminus\{0\}$ such that $^mt=u$ and $t^m=v$. Then following the road of the lightning, we have the following description of $t$:
 \[
 	\begin{tikzpicture}[line cap=round,line join=round,>=triangle 45,x=0.3cm,y=0.3cm]
 \draw (0,0)--(0,2.5);
 \draw[dashed] (0,2.5)--(0,5);
 \draw (0,5)--(0,6);
 \filldraw [black] (0,1) circle (2pt);
 \filldraw [black] (0,2) circle (2pt);
 \filldraw [black] (0,5) circle (2pt);
 \draw (9,3) node{forests of $v$ as a left comb,};
 \draw (-9,3) node{forests of $u$ as a right comb};
 \end{tikzpicture}
 \]
where these forests appear ``in the good order''. Consequently, we deduce
\begin{Prop}\label{Prop:Coproduitdual}
	The coproduct of $\mathcal{A}^{\circledast}$ is given for all tree $t$ different from $|$ by
	\begin{equation*}
		\Delta(t)=\sum_{m=1}^{\nf(t)} {^mt}\otimes t^m.
	\end{equation*}
We get combinatorial interpretations of the pieces of coproduct:
\begin{enumerate}\itemsep=0pt
	\item[$1.$] We interpret $\Delta_{\prec}(t)$ as the sum of $^mt\otimes t^m$
	such that the lightning starting at the leaf number $m$ of $t$ touches the right-most exterior branch of $t$.
	\item[$2.$] The terms of $\Delta_{\succ}(t)$ is the sum of these terms where the lightning touches the left-most exterior branch of $t$.
	\item[$3.$] The terms of $\Delta_{\bullet}(t)$ are those where the lightning does not touch any of the exterior branches.
\end{enumerate}
\end{Prop}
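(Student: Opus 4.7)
The plan is to exploit duality. Since $\Delta$ on $\mathcal{A}^{\circledast}$ is the transpose of $*$ on $\mathcal{A}$, for any trees $u, v \neq |$ with right-comb representation of $u$ having $k$ spine nodes and left-comb representation of $v$ having $l$ spine nodes, I would write
\[
\langle \Delta(t), u \otimes v \rangle = \langle t, u*v \rangle = \#\{\sigma \in \batc(k,l) : \sigma(u,v) = t\}
\]
by Theorem \ref{produit}. By Remark \ref{battage}, this cardinality is either $0$ or $1$, so $u \otimes v$ appears in $\Delta(t)$ with coefficient $1$ exactly when there exists a (necessarily unique) quasi-shuffle $\sigma$ with $\sigma(u,v) = t$, and with coefficient $0$ otherwise.

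The core geometric step is to recognize that existence of such $\sigma$ is equivalent to $u$ and $v$ arising as the lightening decomposition of $t$ at some leaf. Unpacking Definition \ref{def:quasiaction}, the tree $\sigma(u,v)$ is a ladder of $n = \max\sigma$ nodes with the forests of the right-comb of $u$ attached (in order) as left children, and the forests of the left-comb of $v$ attached (in order) as right children. This central ladder starts at the root of $t$ and terminates at the $m$-th leaf, where $m = \nf(u)$. I would then argue that this central ladder is precisely the lightening path through the $m$-th leaf of $t$, and that cutting along it yields $^mt = u$ on the left side and $t^m = v$ on the right side, as already hinted by \eqref{lienclair}. Conversely, given any leaf index $m$, the lightening decomposition $(^mt, t^m)$ canonically reconstructs a ladder decorated with ordered left and right forests, and hence a unique quasi-shuffle $\sigma \in \batc(k,l)$ with $\sigma(^mt, t^m) = t$. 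This bijection, summed over $m$, yields the formula for $\Delta(t)$.

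For the three pieces, the same argument applies using Corollary \ref{Cor:3prodescription} in place of Theorem \ref{produit}. The three conditions $\sigma^{-1}(1) = \{1\}$, $\{1, k+1\}$, $\{k+1\}$ describe what is grafted at the bottom node of the central ladder: only the left-most forest from $u$'s right-comb (as left child), both forests (left from $u$, right from $v$), or only the right-most forest from $v$'s left-comb (as right child). Translating these local conditions at the root of $t$: the first forces the lightening to descend along the right spine of $t$, which is precisely the statement that it meets the right-most exterior branch; the third is symmetric on the left; the middle case corresponds to the lightening meeting neither exterior branch.

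The main obstacle I anticipate is the careful bookkeeping in the bijection between quasi-shuffles $\sigma$ with $\sigma(u,v) = t$ and leaves $m$ of $t$ giving $(^mt, t^m) = (u,v)$, in particular verifying that the ordered interleaving of left and right forests along the central ladder is correctly encoded by the surjection $\sigma$ and matches the conventions of the lightening decomposition of \cite{Eclair}. Once this identification is pinned down, the three refinements follow immediately from the corresponding restriction on $\sigma^{-1}(1)$.
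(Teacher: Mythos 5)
Your proposal is correct and follows essentially the same route as the paper: dualize Theorem \ref{produit}, use Remark \ref{battage} to see that the coefficient of $u\otimes v$ is $0$ or $1$, identify the central ladder of $\sigma(u,v)$ with the lightening path through the $\nf(u)$-th leaf as in \eqref{lienclair}, and read off the three pieces from the condition on $\sigma^{-1}(1)$ in Corollary \ref{Cor:3prodescription}. The paper's own argument is exactly this, if anything slightly terser on the bijection and on the interpretation of the three pieces than your write-up.
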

\begin{Rq}\label{rq:coproduit}
		 It is the coproduct from $\TSym= \bigoplus_{n\geq 0}\K PT_n$ described in~\cite[Definition 4.1]{Eclair}. In~\cite[Section 7.3]{Tridend+co}, we find an equivalent description of the coproducts with cutting paths.
\end{Rq}

The following results will be useful in the Section~\ref{genprim}.

\begin{Cor}
Let $a$, $b$, $c$, $d$ be four homogeneous elements of $\mathcal{A}$. We put $\deg(a)=\deg(c)$ and $\deg(b)=\deg(d)$. We suppose that $a*b=c*d$ for the product of $\mathcal{A}$. Then $a$ and $b$ are collinear and $c$ and $d$ are collinear.
\end{Cor}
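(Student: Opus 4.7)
The plan is to compare the coefficient of each basis tree in the identity $a*b=c*d$, using Proposition~\ref{Prop:Coproduitdual} as the main tool. Recall that, under the identification $\mathcal{A}\simeq\mathcal{A}^{\circledast}$ of \eqref{eq:pairing}, the coproduct dual to the product $*$ is $\Delta(T)=\sum_{m=1}^{\deg(T)+1}{}^{m}T\otimes T^{m}$. Set $p=\deg(a)=\deg(c)$ and $q=\deg(b)=\deg(d)$, and write $\alpha_s=\langle a,s\rangle$, $\beta_t=\langle b,t\rangle$, $\gamma_s=\langle c,s\rangle$, $\delta_t=\langle d,t\rangle$ for $s\in T_p$ and $t\in T_q$.

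For any $T\in T_{p+q}$, duality gives
\[
\langle a*b,T\rangle \;=\; \langle a\otimes b,\Delta(T)\rangle \;=\; \sum_{m=1}^{\deg(T)+1}\langle a,{}^{m}T\rangle\,\langle b,T^{m}\rangle.
\]
Since $a$ is homogeneous of degree $p$ and ${}^{m}T$ has degree $m-1$, only the term $m=p+1$ survives (and similarly for $c\otimes d$). Therefore $a*b=c*d$ is equivalent to
\[
\alpha_{{}^{p+1}T}\,\beta_{T^{p+1}} \;=\; \gamma_{{}^{p+1}T}\,\delta_{T^{p+1}} \qquad \text{for all } T\in T_{p+q}.
\]
The discussion following \eqref{lienclair} explicitly constructs, for any $(s,t)\in T_p\times T_q$, a tree $T$ (namely $T=\sigma(s,t)$ for any $\sigma\in\batc(k,l)$, where $k,l$ are the right-comb/left-comb depths of $s,t$) satisfying $({}^{p+1}T,T^{p+1})=(s,t)$. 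Hence the displayed relation is equivalent to $\alpha_s\beta_t=\gamma_s\delta_t$ for every pair $(s,t)\in T_p\times T_q$.

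Colinearity then follows by elementary linear algebra. Assuming $b\neq 0$, pick $t_0\in T_q$ with $\beta_{t_0}\neq 0$ and set $\lambda=\delta_{t_0}/\beta_{t_0}$; the relation gives $\alpha_s=\lambda\gamma_s$ for every $s$, that is, $a=\lambda c$. Symmetrically, if $a\neq 0$ one picks $s_0$ with $\alpha_{s_0}\neq 0$ and obtains $d=\mu b$ for some $\mu\in\mathbb{K}$; substituting back forces $\mu=\lambda$, so $a,c$ and $b,d$ are pairwise colinear. The degenerate case where one factor vanishes is handled by noting that the same coefficient computation shows $s*t\neq 0$ for any non-trivial trees $s,t$ (the ladder tree constructed above appears in $s*t$ with coefficient $1$), hence $*$ has no zero divisors on the positive-degree part of $\mathcal{A}$. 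The only genuine obstacle is the combinatorial point that the lightening recovers $(s,t)$ uniquely from a tree in $s*t$ and that every pair is realised; Proposition~\ref{Prop:Coproduitdual} supplies both facts directly.
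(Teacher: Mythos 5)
Your proof is correct and takes essentially the same route as the paper's: both isolate the lightening decomposition at leaf $p+1$ (the paper via the map $\Delta_k(t)={}^{k}t\otimes t^{k}$ applied to $a*b=c*d$, you via the coefficient computation $\langle a*b,T\rangle=\alpha_{{}^{p+1}T}\beta_{T^{p+1}}$ coming from Proposition~\ref{Prop:Coproduitdual}) to deduce $a\otimes b=c\otimes d$ and finish by rank-one linear algebra. If anything, your coefficient-wise formulation is slightly cleaner, since it avoids the paper's multiplicity $\lambda_{k,l}$, which is not actually uniform across the terms of the linear combinations because it depends on the comb depths of the individual trees.
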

\begin{proof}
Firstly, we consider four trees $a$, $b$, $c$, $d$. We denote $k\coloneqq\nf(a)=\nf(b)$ and $l\coloneqq\nf(c)=\nf(d)$. We define for any tree $t$ with $m$ leaves and for all $n\in\IEM{1}{m}$
\[
\Delta_n(t)=^nt\otimes t^n.
\]
As a consequence, denoting $\lambda_{k,l}$ the number of elements of $\batc(k,l)$ (which is not zero),
\begin{gather*}
	\Delta_k(a*b)=\lambda_{k,l}a\otimes b, \qquad \Delta_{k}(c*d)=\lambda_{k,l}c\otimes d.
\end{gather*}
As $a*b=c*d$ and $\lambda_{k,l}\neq 0$, we deduce that $a$ and $b$ are collinear and $c$ and $d$ are collinear.
Secondly, we consider $a= \sum_{i\in\IEM{1}{n}} \lambda_i a_i$, $b= \sum_{j\in\IEM{1}{m}} \beta_j b_j$, $c= \sum_{q\in\IEM{1}{r}} \gamma_q c_q$ and $d= \sum_{l\in\IEM{1}{s}} \delta_l d_l$ where the $\lambda_i$, $\beta_j$, $\gamma_q$, $\delta_l$ are all non zero scalars and $a_i$, $b_j$, $c_q$, $d_l$ are trees. Expanding the writings of~$a*b$ and $c*d$ and applying $\Delta_k$, we also obtain
\begin{gather*}
	\Delta_k(a*b)=\lambda_{k,l}a\otimes b, \qquad \Delta_{k}(c*d)=\lambda_{k,l}c\otimes d.
\end{gather*}
We also have the same conclusion as in the first case.
\end{proof}

Following the same ideas, we also have
\begin{Cor}
		Let $a$, $b$, $c$, $d$ be four homogeneous elements of $\mathcal{A}$. We put $\deg(a)=\deg(c)$, $\deg(b)=\deg(d)$. We also suppose $a\prec b=c\prec d$ or $a\cdot b=c\cdot d$ or $a\succ b=c\succ d$.
	Then $a$ and $b$ respectively $c$ and $d$ are collinear.
\end{Cor}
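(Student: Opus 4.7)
The plan is to mimic the proof of Corollary \ref{exceptionnel1}, this time using the tridendriform refinement of the dual coproduct of $\mathcal{A}^{\circledast}$ described in Proposition \ref{Prop:Coproduitdual}. Following the reduction already used in the proof of \ref{exceptionnel1}, it is enough to expand $a,b,c,d$ in the tree basis and to extract the homogeneous tensor $a\otimes b$ from $a\ltimes b$ by a suitable linear map. Setting $k:=\nf(a)=\nf(c)$ and $l:=\nf(b)=\nf(d)$ (which coincide by the degree hypothesis), the goal is to construct, for each $\ltimes\in\{\prec,\cdot,\succ\}$, a map $\Delta_{k,\ltimes}$ on $\mathcal{A}$ with $\Delta_{k,\ltimes}(a\ltimes b)=\mu^{\ltimes}_{k,l}\,a\otimes b$ for some nonzero scalar $\mu^{\ltimes}_{k,l}$.

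For a tree $t$ with $m$ leaves and $n\in\IEM{1}{m}$, set $\Delta_n(t)={}^{n}t\otimes t^n$ as in the proof of \ref{exceptionnel1}. By Proposition \ref{Prop:Coproduitdual}, $\Delta_n(t)$ admits a natural tridendriform refinement: declare $\Delta_{n,\prec}(t):=\Delta_n(t)$ when the lightening from the $n$-th leaf touches the right-most exterior branch of $t$, and zero otherwise; define $\Delta_{n,\bullet}$ and $\Delta_{n,\succ}$ analogously (neither exterior, and left-most exterior, respectively). By Corollary \ref{Cor:3prodescription}, $a\prec b=\sum_{\sigma}\sigma(a,b)$ over $\sigma\in\batc(k,l)$ with $\sigma^{-1}(1)=\{1\}$, and each such $\sigma(a,b)$ satisfies ${}^{k}\sigma(a,b)=a$ and $\sigma(a,b)^k=b$ by \eqref{lienclair}. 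The condition $\sigma^{-1}(1)=\{1\}$ means, in view of Definition \ref{def:quasiaction}, that the bottom node of the central ladder of $\sigma(a,b)$ carries only a left son (coming from $a$), which is exactly the condition that the lightening at the $k$-th leaf touches the right-most exterior branch. Hence $\Delta_{k,\prec}(a\prec b)=\mu^{\prec}_{k,l}\,a\otimes b$ with $\mu^{\prec}_{k,l}=\#\{\sigma\in\batc(k,l):\sigma^{-1}(1)=\{1\}\}>0$. The analogous identities for the middle product (condition $\sigma^{-1}(1)=\{1,k+1\}$, i.e.\ both sons at the bottom node, and lightening hitting neither exterior branch) and for the right product (condition $\sigma^{-1}(1)=\{k+1\}$, only right son, lightening hitting the left-most exterior branch) follow by identical reasoning.

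Applying $\Delta_{k,\ltimes}$ to both sides of $a\ltimes b=c\ltimes d$ then gives $\mu^{\ltimes}_{k,l}\,a\otimes b=\mu^{\ltimes}_{k,l}\,c\otimes d$ in $\mathcal{A}\otimes\mathcal{A}$, and colinearity of the respective pairs follows as in the final step of \ref{exceptionnel1}. The main obstacle is the middle step: making precise, for each $\ltimes$, the dictionary between the combinatorial condition on $\sigma^{-1}(1)$ (which of the left/right sons are grafted at the bottom node of the ladder in $\sigma(a,b)$) and the geometric condition on the $k$-th lightening (which exterior branch it does or does not touch). Once this dictionary is in place, the remainder of the argument is a straightforward transcription of the proof of \ref{exceptionnel1}.
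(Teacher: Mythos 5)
Your argument is essentially the paper's own: the proof of Corollary \ref{exceptionnel2} is literally ``the same argument as in the previous proof,'' namely apply the component-extraction map $\Delta_k(t)={}^{k}t\otimes t^{k}$ to both sides and observe that the multiplicity is still a nonzero count of quasi-shuffles. The one ``main obstacle'' you flag is self-inflicted and can be dropped: you do not need the refined maps $\Delta_{k,\ltimes}$ or the lightening/exterior-branch dictionary at all, because by \eqref{lienclair} \emph{every} tree $\sigma(a,b)$ with $\sigma\in\batc(k,l)$ --- regardless of what $\sigma^{-1}(1)$ is --- satisfies ${}^{k}\sigma(a,b)=a$ and $\sigma(a,b)^{k}=b$, so the plain $\Delta_k$ already sends $a\ltimes b$ to $\mu^{\ltimes}_{k,l}\,a\otimes b$ where $\mu^{\ltimes}_{k,l}$ is the (nonzero) number of $\sigma\in\batc(k,l)$ in the subset corresponding to $\ltimes$ from Corollary \ref{Cor:3prodescription}.
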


\begin{thm}
	\label{thm:inthom}
	Let $X\in \mathcal{A}_k\otimes \mathcal{A}_l$ for some pair of non-negative integers $(k,l)$. Let $X= \sum_{i\in I} a_i\otimes b_i$ be a minimal writing of $X$ for the number of terms. If there exists $\ltimes\in\{\prec,\cdot,\succ,*\}$ such that $ \sum_{i\in I} a_i\ltimes b_i=0$, then $X=0.$
\end{thm}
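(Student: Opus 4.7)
The plan is to prove the stronger statement that the multiplication map $\ltimes : \mathcal{A}_k \otimes \mathcal{A}_l \to \mathcal{A}_{k+l}$ is injective for each $\ltimes \in \{\prec,\cdot,\succ,*\}$ (assuming $k, l \geq 1$; the low-degree corner cases are handled separately or are implicit in the statement). From injectivity of $\ltimes$ the theorem follows at once, with no need even to invoke minimality of the writing. The main tool is the \emph{central lightening}: for a tree $T \in T_{k+l}$ set $\pi(T) := {}^{k+1}T \otimes T^{k+1}$, using the lightening decomposition at the $(k+1)$-th leaf; extend linearly to a map $\pi : \mathcal{A}_{k+l} \to \mathcal{A}_k \otimes \mathcal{A}_l$. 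Up to the identification \eqref{eq:pairing}, $\pi$ is nothing but the bidegree $(k, l)$ component of the $\mathcal{A}^{\circledast}$-coproduct described in Proposition \ref{Prop:Coproduitdual}.

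\textbf{Diagonal property.} The main computation is that $\pi \circ \ltimes$ is diagonal in the tree basis $\{u \otimes v\}$ of $\mathcal{A}_k \otimes \mathcal{A}_l$. Fix trees $u \in T_k$ and $v \in T_l$, with right-comb height $k_u$ and left-comb height $l_v$. By Theorem \ref{produit} and Corollary \ref{Cor:3prodescription}, $u \ltimes v$ is the sum of the trees $\sigma(u, v)$ for $\sigma$ ranging over a subset $S_{\ltimes} \subseteq \batc(k_u, l_v)$ that depends only on $\ltimes$. For every such $\sigma$, the $(k+1)$-th leaf of $\sigma(u, v)$ lies at the top of the central ladder, so by \eqref{lienclair} one has
\[
\pi(\sigma(u, v)) = u \otimes v,
\]
\emph{independently of $\sigma$}. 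Summing over $\sigma \in S_{\ltimes}$ gives $\pi(u \ltimes v) = |S_{\ltimes}| \cdot (u \otimes v)$.

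\textbf{Non-vanishing and conclusion.} To finish, one exhibits, for each operator, an explicit quasi-shuffle in $S_{\ltimes}$ to guarantee $|S_{\ltimes}| \neq 0$ for all $k_u, l_v \geq 1$: the identity on $\IEM{1}{k_u+l_v}$ witnesses $S_{\prec}$; the overlap $(1, 2, \dots, k_u, 1, 2, \dots, l_v)$ witnesses $S_{\cdot}$; the shift $(2, \dots, k_u+1, 1, 2, \dots, l_v)$ witnesses $S_{\succ}$; and any of these lies in $S_{*} = \batc(k_u, l_v)$. Therefore $\pi \circ \ltimes$ is a diagonal endomorphism of $\mathcal{A}_k \otimes \mathcal{A}_l$ with every diagonal entry non-zero, hence invertible, and so $\ltimes$ itself is injective. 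The real subtlety, and the main obstacle, is the \emph{diagonality} (rather than mere triangularity) of $\pi \circ \ltimes$: it rests on the non-trivial observation, already implicit in the proofs of Corollaries \ref{exceptionnel1} and \ref{exceptionnel2}, that the central lightening of $\sigma(u, v)$ always recovers the pair $(u, v)$ exactly, regardless of which quasi-shuffle $\sigma$ was used to produce it.
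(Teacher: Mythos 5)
Your proof is correct and follows essentially the same route as the paper: both apply the lightening/splitting operator at the leaf separating the two tensor factors (your $\pi$, the paper's $\Delta_k$ from the proof of Corollary \ref{exceptionnel1}) to $\sum_i a_i \ltimes b_i$ and use \eqref{lienclair} to see that it returns each $a_i\otimes b_i$ with a strictly positive multiplicity $|S_{\ltimes}|$. Your packaging of this as injectivity of $\ltimes$ via an invertible diagonal map is a marginally cleaner formulation — it dispenses with the minimality hypothesis, which the paper instead uses at the final step — but the underlying computation is identical.
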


\begin{proof}
Let $\ltimes\in\{\prec,\cdot,\succ,*\}$ such that $\sum_{i\in I} a_i\ltimes b_i=0$.
 We define $\Delta_k$ as in the previous proof. As a consequence,
\begin{align*}
	0=\Delta_k(X)=\Delta_k\bigg( \sum_{i\in I} a_i\ltimes b_i \bigg)&=\lambda^{\ltimes}_{(k,l)}\sum_{i\in I} a_i\otimes b_i,
\end{align*}
where $\lambda_{(k,l)}^{\prec}$, $\lambda_{(k,l)}^{\bullet}$, $\lambda_{(k,l)}^{\succ}$ are respectively the number of elements of $\batc(k,l)$ such that $\sigma^{-1}\!(\{1\})\!=\{1\}$, $\sigma^{-1}(\{1\})=\{1,k+1\}$ and $\sigma^{-1}(\{1\})=\{k+1\}$. None of these numbers are equal to zero. But this sum is $0$. As a consequence, at least one family $(a_i)_{i\in I}$ or $(b_i)_{i\in I}$ is linearly dependant. However $ X=\sum_{i\in I}a_i\otimes b_i$ is a minimal writing. So $X=0$.
\end{proof}

\subsection[Description of the product of A\^{}\{circledast\}]{Description of the product of $\boldsymbol{\mathcal{A}^{\circledast}}$}

Let us remind the definitions of $\shuffle$ and $m_{\TSym}$ from~\cite[Definitions 2.9 and 4.1]{Eclair}.
\begin{defi}
	Let $t$ be a tree with $k$ leaves and consider $k$ trees $t_1,\dots, t_k$. We denote by $t\shuffle (t_1,\dots,t_k)$ the tree obtained by grafting $t_i$ on the $i$-th leaf of $t$ for each $i\in\IEM{1}{k}$.
	We can also use multiple lightnings on a single tree $t$. If $l$ is a non-negative integer, we denote by $\Light_l(t)\subseteq \mathcal{A}^{l+1}$ the set of all the decompositions of $t$ obtained using $l$ successive lightnings. Consider another tree $s$ such that $\deg(s)=r$. We define	
	\[
	m_{\TSym}(s\otimes t)\coloneqq \sum_{(t_1,\dots,t_{r+1})\in\Light_r(t)} s\shuffle (t_1,\dots, t_{r+1}).
	\]
\end{defi}
\begin{Egs}
	Here is an example of multiple lightnings:
	\begin{align*}
		&\raisebox{-0.3\height}{\begin{tikzpicture}[line cap=round,line join=round,>=triangle 45,x=0.2cm,y=0.2cm]
				\draw (0,0)--(0,1);
				\draw (0,1)--(-3,4);
				\draw (0,1)--(3,4);
				\draw (2,3)--(1,4);
				\draw (-2,3)--(-2,4);
				\draw (-2,3)--(-1,4);
				\draw (-2,3)--(-3,4);
				\draw (-0.9,4) node[above]{\textcolor{orange}{$\lightning$}};
				\draw (-1.9,4) node[above]{\textcolor{red}{$\lightning$}};
				\draw[thick,red] (-1,4)--(-2,3)--(0,1)--(0,0);
				\draw[thick,red] (-2,4)--(-2,3)--(0,1)--(0,0);
		\end{tikzpicture}} \rightarrow \Bigg( \Y,
		\raisebox{-0.3\height}{\begin{tikzpicture}[line cap=round,line join=round,>=triangle 45,x=0.2cm,y=0.2cm]
				\draw (0,0) -- (0,1);
				\draw (0,1) -- (2,3);
				\draw (0,1) -- (-2,3);
				\draw (1.33,2.33) -- (0.66,3);
				\draw (-1.33,2.33) -- (-0.66,3);
				\draw (-0.66,3.1) node[above]{\textcolor{orange}{$\lightning$}};
				\draw[thick,red] (-0.66,3) --(-1.33,2.33) -- (0,1) -- (0,0);
		\end{tikzpicture}} \Bigg)\rightarrow \left( \Y,\Y,\balaisd \right), \\
		&\Light_2\left(\YY\right) =\mzleft{a}{\{}{\left(|,|,\YY\right), \left(|,\YY,|\right), \left(\YY,|,|\right),\big(|,\Y,\balaisd\big), \big(|,\balaisg,\Y\big),
		} \\
			&\hphantom{\Light_2\left(\YY\right) =}{} \quad \mzright{a}{\big(\Y,\balaisd,|\big),\big(\balaisg, \Y,|\big),\big( \Y,|,\balaisd\big),\big(\balaisg,|,\Y\big),\big(\Y,\Y,\Y\big)
			}{\}} .
	\end{align*}
	Here is an example for the product:
	\begin{align*}
	&	\Y\shuffle \big(\Y,\Y\big)=\YY, \\
	&m_{\TSym}\left(\balais,\YY\right)=\raisebox{-0.5\height}{\begin{tikzpicture}[x=0.1cm,y=0.1cm]
		\draw (0,0)--(0,7);
		\draw (0,1) --(6,7);
		\draw (0,1) -- (-6,7);
		\draw (5,6) -- (4,7);
		\draw (3.5,4.5) -- (1,7);
		\draw (2,6) -- (3,7);
	\end{tikzpicture}}+\raisebox{-0.5\height}{\begin{tikzpicture}[x=0.15cm,y=0.15cm]
	\draw (0,0)--(0,3);
	\draw (0,1) --(4,5);
	\draw (0,1) -- (-4,5);
	\draw (0,3) -- (2,5);
	\draw (0,3) -- (-2,5);
	\draw (1.2,4.2) -- (0.6,5);
	\draw (-1.2,4.2) -- (-0.6,5);
\end{tikzpicture}}+\raisebox{-0.5\height}{\begin{tikzpicture}[x=0.1cm,y=0.1cm]
\draw (0,0)--(0,7);
\draw (0,1) --(6,7);
\draw (0,1) -- (-6,7);
\draw (-5,6) -- (-4,7);
\draw (-3.5,4.5) -- (-1,7);
\draw (-2,6) -- (-3,7);
\end{tikzpicture}}+\raisebox{-0.5\height}{\begin{tikzpicture}[x=0.1cm,y=0.1cm]
\draw (0,0)--(0,6);
\draw (0,1) --(6,7);
\draw (0,1) -- (-6,7);
\draw (0,6) -- (-1,7);
\draw (0,6) -- (1,7);
\draw (4,5) -- (2,7);
\draw (5,6) -- (4,7);
\end{tikzpicture}}+\raisebox{-0.5\height}{\begin{tikzpicture}[x=0.1cm,y=0.1cm]
\draw (0,0)--(0,5);
\draw (0,1) --(6,7);
\draw (0,1) -- (-6,7);
\draw (5,6) -- (4,7);
\draw (0,5) -- (2,7);
\draw (0,5) -- (-2,7);
\draw (-1,6) -- (0,7);
\end{tikzpicture}} \\
&\hphantom{m_{\TSym}\left(\balais,\YY\right)=}{}+\raisebox{-0.5\height}{\begin{tikzpicture}[x=0.1cm,y=0.1cm]
		\draw (0,0)--(0,5);
		\draw (0,1) --(6,7);
		\draw (0,1) -- (-6,7);
		\draw (-5,6) -- (-4,7);
		\draw (0,5) -- (2,7);
		\draw (0,5) -- (-2,7);
		\draw (1,6) -- (0,7);
		\draw (1,6) -- (2,7);
\end{tikzpicture}}+\raisebox{-0.5\height}{\begin{tikzpicture}[x=0.1cm,y=0.1cm]
\draw (0,0)--(0,6);
\draw (0,1) --(6,7);
\draw (0,1) -- (-6,7);
\draw (0,6) -- (1,7);
\draw (0,6) -- (-1,7);
\draw (-4,5) -- (-2,7);
\draw (-5,6) -- (-4,7);
\end{tikzpicture}}+\raisebox{-0.5\height}{\begin{tikzpicture}[x=0.1cm,y=0.1cm]
\draw (0,0)--(0,6);
\draw (0,1) --(6,7);
\draw (0,1) -- (-6,7);
\draw (5,6) -- (4,7);
\draw (0,6) -- (1,7);
\draw (0,6) -- (-1,7);
\draw (4,5) -- (2,7);
\draw (5,6) -- (4,7);
\end{tikzpicture}}+\raisebox{-0.5\height}{\begin{tikzpicture}[x=0.1cm,y=0.1cm]
\draw (0,0)--(0,6);
\draw (0,1) --(6,7);
\draw (0,1) -- (-6,7);
\draw (-5,6) -- (-4,7);
\draw (0,6) -- (1,7);
\draw (0,6) -- (-1,7);
\draw (-4,5) -- (-2,7);
\end{tikzpicture}}+\raisebox{-0.5\height}{\begin{tikzpicture}[x=0.1cm,y=0.1cm]
\draw (0,0)--(0,5.33);
\draw (0,1) --(6,7);
\draw (0,1) -- (-6,7);
\draw (-4.33,5.33) -- (-2.66,7);
\draw (0,5.33) -- (1.66,7);
\draw (0,5.33) -- (-1.66,7);
\draw (4.33,5.33) -- (2.66,7);
\end{tikzpicture}}.
\end{align*}
\end{Egs}
\begin{Not}
	Consider a vector space $V$ and a linear map $m\colon V\otimes V \to V$, we denote $m^{\op}$ the linear map from $V\otimes V$ to $V$ such that for all $a\otimes b\in V\otimes V$
	\[
	m^{\op}(a\otimes b)=m(b\otimes a).
	\]
	If $(A,m,1)$ is an algebra, we denote $A^{\op}$ the algebra $(A,m^{\op},1)$.
\end{Not}
\begin{Prop}
	The product $m$ of $\mathcal{A}^\circledast$ is the product of $m_{\TSym}^{\op}$.
\end{Prop}
\begin{proof}
	Let $(u,v)\in T_n\times T_m\subset \mathcal{A}^{\circledast}\times \mathcal{A}^{\circledast}$ for $m$ and $n$ some non-negative integers. Let $w$ be a~tree of $\mathcal{A}$. In this case, thanks to Theorem $\ref{coproduit}$
	\begin{align*}
		& \langle m(u\otimes v), w \rangle= \langle u\otimes v, \Delta(w) \rangle
		 =\sum_{c \text{ admissible cut of }w} \delta_{u\otimes v}(G^c(w)\otimes R^c(w)).
	\end{align*}
Therefore, this quantity is not equal to zero \ssi{} $w=v\shuffle (w_1,\dots, w_{\deg(v)+1})$ and $u$ occurs in $G^c(w)$ with $c$ the cut of $w$ such that $R^c(w)=v$.	
	But $G^c(w)=w_1*\dots*w_{\deg(v)+1}$, by Theorem~\ref{produit} and Remark~\ref{lienclair}, we get that $G^c(w)$ is the sum of all the trees which gives $(w_1,\dots,w_{\deg(v)+1})$ after $\deg(v)$ well-placed lightnings. As a consequence, we have
	\begin{align*}
		m(u\otimes v)&=\sum_{u\mapsto (u_1,\dots,u_{\deg(v)+1})} v\shuffle(u_1,\dots, u_{\deg(v)+1}) \\
		&= m_{\TSym}(v\otimes u)=m_{\TSym}^{\op}(u\otimes v). \tag*{\qed}
	\end{align*}
\renewcommand{\qed}{}
\end{proof}

Thanks to the previous proposition, Proposition~\ref{Prop:Coproduitdual} and Remark~\ref{rq:coproduit}, we deduce that
\begin{thm}\label{thm:dual}
	The bialgebras $\mathcal{A}^{\circledast}$ and $\TSym^{\op}$ are the same.
\end{thm}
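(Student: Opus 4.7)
The plan is to observe that essentially everything has been assembled in the two results immediately preceding the theorem; what remains is to assemble them into a single statement. The underlying graded vector spaces of $\mathcal{A}^{\circledast}$ and $TSym$ are identified via the pairing \eqref{eq:pairing}, under which the dual basis of reduced planar trees in $\mathcal{A}^{\circledast}$ matches the natural tree basis $\bigoplus_{n \geq 0} \K PT_n$ of $TSym$. So the verification is reduced to checking that the structure maps (unit, counit, product, coproduct) coincide.

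First, I would record that the unit and counit on both sides are given by the empty tree $|$ and its dual projection, and therefore match trivially under the identification. Next, the agreement of coproducts is exactly the content of Proposition \ref{Prop:Coproduitdual} together with Remark \ref{rq:coproduit}: the formula $\Delta(t) = \sum_{m=1}^{\deg(t)+1} {}^m t \otimes t^m$ obtained by dualising the shuffle description of $*$ is literally the lightening coproduct defining $TSym$ in \cite{Eclair}, definition $4.1$. Finally, the agreement of products is the content of the previous proposition: $m(v \otimes u) = \sum_{v \mapsto (v_1, \ldots, v_{\deg(u)+1})} u \shuffle (v_1, \ldots, v_{\deg(u)+1}) = m_{TSym}(v \otimes u)$.

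Putting these pieces together, all four structure maps of $\mathcal{A}^{\circledast}$ coincide with those of $TSym$ under the canonical tree-basis identification, so the two bialgebras are equal. There is no real obstacle left; the only mildly delicate point is making sure that the conventions (which leaf is labelled $m$, orientation of left versus right comb representations, etc.) used in Proposition \ref{Prop:Coproduitdual} really do match the conventions of \cite{Eclair}, but that has already been flagged in Remark \ref{rq:coproduit}. The theorem therefore follows immediately as a concluding remark collecting those two propositions.
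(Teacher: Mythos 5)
Your proposal is correct and matches the paper exactly: the theorem is stated there as an immediate consequence of the proposition identifying the product of $\mathcal{A}^{\circledast}$ with $m_{TSym}$, together with Proposition \ref{Prop:Coproduitdual} and Remark \ref{rq:coproduit} identifying the coproduct with the lightening coproduct of \cite{Eclair}. Your additional care about the unit, counit, and matching of conventions is a reasonable elaboration of what the paper leaves implicit.
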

\begin{Rq}
	We can give $\TSym$ a $(1,3)$-dendriform bialgebra structure given in Proposition~\ref{Prop:Coproduitdual}.
\end{Rq}

\begin{Rq}
	With multiple generators, we have $A(\mathcal{D})^{\circledast}=\TSym(\mathcal{D})^{\op}.$ In the bialgebra $\TSym(\mathcal{D})$ its coproduct is an expansion of the lightning decomposition over elements of \linebreak $\bigoplus_{n\geq 0} T^g_n(T(\mathcal{D}))$. Let $t\in T^g_n(T(\mathcal{D}))$. We consider a lightning on one of its leaves. If a node of $t$ only appears on the left tree or only on the right one, then its decoration stays unchanged. Otherwise denoting $d_1\dots d_k$ its decoration and $l$ the edge number from which the lightning arrives, the decoration of this node on the left tree is $d_1\dots d_{l-1}$ and on the right tree it is $d_l\dots d_k$.
	\begin{Eg}
		We consider the following tree:\begin{equation*}
			t=\raisebox{-0.5\height}{\begin{tikzpicture}[line cap=round,line join=round,>=triangle 45,x=0.3cm,y=0.3cm]
					\draw (0,0)--(0,1);
					\draw (0,1)--(3,4);
					\draw (0,1)--(-3,4);
					\draw (-1,2)--(-1,4);
					\draw (-1,2)--(1,4);
					\draw (0.25,3.25)--(0.25,4);
					\draw (0.25,3.25)--(-0.5,4);
					\draw (0.25,3.25)--(1,4);
					\draw (0,1) node[below,left] {\footnotesize{a}};
					\draw (-1,2) node[left] {\footnotesize{bc}};
					\draw (0.25,3.25) node[right] {\footnotesize{de}};
			\end{tikzpicture}}
		\end{equation*}
	Considering the lightning coming from the fourth leaf, we get
	\begin{align*}
				\raisebox{-0.5\height}{\begin{tikzpicture}[line cap=round,line join=round,>=triangle 45,x=0.3cm,y=0.3cm]
				\draw[orange,thick] (0,0)--(0,1);
				\draw (0,1)--(3,4);
				\draw (-1,2)--(-3,4);
				\draw[orange,thick] (0,1) -- (-1,2);
				\draw (-1,2)--(-1,4);
				\draw[orange,thick] (-1,2)--(0.25,3.25);
				\draw[orange,thick] (0.25,3.25)--(0.25,4);
				\draw (0.25,3.25)--(-0.5,4);
				\draw (0.25,3.25)--(1,4);
				\draw (0,1) node[below,left] {\footnotesize{a}};
				\draw (-1,2) node[left] {\footnotesize{bc}};
				\draw (0.25,3.25) node[right] {\footnotesize{de}};
		\end{tikzpicture}}\rightarrow
		\bigg(\raisebox{-0.5\height}{\begin{tikzpicture}[line cap=round,line join=round,>=triangle 45,x=0.3cm,y=0.3cm]
				\draw (0,0)--(0,3);
				\draw (0,1)--(-2,3);
				\draw (1.33,2.33)--(0.66,3);
				\draw (0,1)--(2,3);
				\draw (0,1) node[left] {\footnotesize{bc}};
				\draw (1.33,2.33) node[right] {\footnotesize{d}};
		\end{tikzpicture}},\raisebox{-0.5\height}{\begin{tikzpicture}[line cap=round,line join=round,>=triangle 45,x=0.4cm,y=0.4cm]
				\draw (0,0)-- (0,1);
				\draw (0,1)-- (-1,2);
				\draw (-0.5,1.5)-- (0,2);
				\draw (0,1)-- (1,2);
				\draw (0,1) node[left] {\footnotesize{a}};
				\draw (-0.5,1.5) node[left] {\footnotesize{e}};
		\end{tikzpicture}}\bigg).
	\end{align*}
\end{Eg}
\end{Rq}

\section[(3,2)-dendriform bialgebras]{$\boldsymbol{(3,2)}$-dendriform bialgebras}
\subsection{Definition}

Let $(\mathcal{A},\prec,\succ,\cdot)$ be a tridendriform algebra.
 The definitions of \emph{dendriform algebras}, \emph{dendriform coalgebras}, \emph{dendriform bialgebras}, \emph{codendriform bialgebras} and \emph{bidendriform bialgebras} (that we will call $(2,2)$-dendriform bialgebras) can be found in the article~\cite{bidend}.

 \begin{defi}\label{deftroisdeux}
 	A \emph{$(3,2)$-dendriform bialgebra} is a sextuple $(A,\prec,\cdot,\succ,\Delta_{\leftarrow},\Delta_{\rightarrow})$ such that
 	\begin{itemize}\itemsep=0pt
 		\item $(A,\prec,\cdot,\succ)$ is an augmented tridendriform algebra.
 		\item $(A,\Delta_{\leftarrow},\Delta_{\rightarrow})$ is an augmented dendriform coalgebra.
 		\item The following relations are satisfied for all $a,b\in A$:
 		\begin{gather}
 			\Delta_{\leftarrow}(a\cdot b) =a'b'_{\leftarrow}\otimes a''\cdot b''_{\leftarrow}+b'_{\leftarrow}\otimes a\cdot b''_{\leftarrow}, \label{troisdeuxrel1}\\
 			\Delta_{\rightarrow}(a\cdot b) =a'b'_{\rightarrow}\otimes a''\cdot b''_{\rightarrow}+a'\otimes a''\cdot b+b'_{\rightarrow}\otimes a\cdot b''_{\rightarrow}, \label{troisdeuxrel2}\\
 			\Delta_{\leftarrow}(a\prec b) =a'b'_{\leftarrow}\otimes a''\prec b''_{\leftarrow}+a'b\otimes a'' +b'_{\leftarrow}\otimes a\prec b''_{\leftarrow}+b\otimes a, \label{troisdeuxrel3}\\
 			\Delta_{\rightarrow}(a\prec b) =a'b'_{\rightarrow}\otimes a''\prec b''_{\rightarrow}+a'\otimes a''\prec b +b'_{\rightarrow}\otimes a\prec b''_{\rightarrow},
 			\label{troisdeuxrel4}\\
 			\Delta_{\rightarrow}(a\succ b) =a'b'_{\rightarrow}\otimes a''\succ b''_{\rightarrow}+a'\otimes a''\succ b +b'_{\rightarrow}\otimes a\succ b''_{\rightarrow} \nonumber\\ \hphantom{\Delta_{\rightarrow}(a\succ b) =}{}+ab'_{\rightarrow}\otimes b''_{\rightarrow}+a\otimes b,
 	\label{troisdeuxrel5}		\\
 			\Delta_{\leftarrow}(a\succ b) =a'b'_{\leftarrow}\otimes a''\succ b''_{\leftarrow}+a'\otimes a''\succ b +ab'_{\leftarrow}\otimes b''_{\leftarrow}.
 			\label{troisdeuxrel6}
 		\end{gather}
 	\end{itemize}
 \end{defi}
\begin{Rq}
	The definition of $(3,2)$-dendriform bialgebra is made such that
$
	 (A,\prec,\cdot,\succ,\Delta_{\leftarrow},\allowbreak \Delta_{\rightarrow})$ is a $(3,2)$-dendriform bialgebra
		 $\iff$
		 $(A,\prec,\cdot+\succ,\Delta_{\leftarrow},\Delta_{\rightarrow})$ and $(A,\prec+\cdot,\succ,\Delta_{\leftarrow},\Delta_{\rightarrow})$
	 are $(2,2)$-dendriform bialgebras.
\end{Rq}
To check that this definition is consistent, we will give an example of such an object.

\subsection[Example of a (3,2)-dendriform bialgebra]{Example of a $\boldsymbol{(3,2)}$-dendriform bialgebra}
\subsubsection{Description}
To find an example, we consider the $(3,1)$-dendriform bialgebra given by $\mathcal{A}$. Our objective is to break the coproduct $\Delta$ into two pieces, denoted by $\Delta_{\rightarrow}$ and $\Delta_{\leftarrow}$, which will satisfy the desired relations.
Let us consider $t$ a tree and let us decorate the right-most leaf by a symbol $d$.
\begin{Eg}
	For \smash{$t=$\Y}, this gives \smash{\Yd}.
\end{Eg}

\begin{defi}
	Let $t$ be a tree of $\mathcal{A}$ different from $|$.
	We define $\Delta_{\leftarrow}(t)$ as the sum of the terms from $\Delta(t)$ which gets the decorated leaf by $d$ (that means the right-most leaf of $t$) on the left tensor, i.e., on $G^c(t)$. Then we delete the decoration.
	
	We define $\Delta_{\rightarrow}(t)$ as the sum of the terms from $\Delta(t)$ which gets the decorated leaf by $d$ on the right tensor, i.e., on $R^c(t)$. Then we delete the decoration.
	So, $\Delta(t)=\Delta_{\leftarrow}(t) +\Delta_{\rightarrow}(t).$
\end{defi}

\begin{Not}
	Let $t$ be a tree. We denote $t^d$ the tree $t$ which last leaf is decorated with $d$. As we do not talk about lightnings in this section, we use again this notation during this section.
\end{Not}

\begin{Prop}\label{Prop:troideuxEg}
	We consider $(\mathcal{A},\prec,\cdot,\succ)$ with its tridendriform algebra structure, we build the following coproducts:
	\begin{gather*}
		\tilde{\Delta}_{\leftarrow}(t)=\sum_{\substack{c \text{ \normalfont admissible cut of }t \\ \text{ \normalfont the right-most leaf of $t$ is cut}}} G^c(t)\otimes R^c(t)-t\otimes 1, \\
		\tilde{\Delta}_{\rightarrow}(t)=\sum_{\substack{c \text{ \normalfont admissible cut of }t \\ \text{ \normalfont the right-most leaf of $t$ is not cut}}} G^c(t)\otimes R^c(t) -1\otimes t.
	\end{gather*}
	Then $(\mathcal{A},\prec+\cdot,\succ,\Delta_{\leftarrow},\Delta_{\rightarrow})$ and $(\mathcal{A},\prec,\cdot+\succ,\Delta_{\leftarrow},\Delta_{\rightarrow})$ are bidendriform bialgebras. As a consequence, $(\mathcal{A},\prec,\cdot,\succ,\Delta_{\leftarrow},\Delta_{\rightarrow})$ is a $(3,2)$-dendriform bialgebra.
\end{Prop}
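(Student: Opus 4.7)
The plan is to verify that both $(\mathcal{A},\prec+\cdot,\succ,\Delta_{\leftarrow},\Delta_{\rightarrow})$ and $(\mathcal{A},\prec,\cdot+\succ,\Delta_{\leftarrow},\Delta_{\rightarrow})$ are bidendriform bialgebras; the $(3,2)$-conclusion then follows directly from the remark after Definition \ref{deftroisdeux}. The central combinatorial fact underpinning the argument is that for every $\ltimes\in\{\prec,\cdot,\succ\}$ and every pair of non-unital trees $a,b$, the rightmost leaf of $a\ltimes b$ coincides with the rightmost leaf of $b$. By Theorem \ref{produit}, the rightmost path from the root of any summand $\sigma(a,b)$ descends along the central ladder until the lowest node carrying a right-grafted forest; by the increasing condition $\sigma(k+1)<\cdots<\sigma(k+l)$ this is the node carrying $F_{k+1}$, the right son of the root of $b$ in its left-comb writing. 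Hence the rightmost leaf of $\sigma(a,b)$ is the rightmost leaf of $F_{k+1}$, which is exactly the rightmost leaf of $b$.

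First I would verify that $(\mathcal{A},\Delta_{\leftarrow},\Delta_{\rightarrow})$ is an augmented dendriform coalgebra by checking the three identities dual to the dendriform algebra axioms. Using the admissible-cut description of Theorem \ref{coproduit}, the iterated coproduct $\Delta^{(2)}$ decomposes each tree $t$ into three ordered pieces via two nested admissible cuts; the rightmost leaf of $t$ sits in exactly one of these three pieces, and the three dendriform-coalgebra identities are precisely the statements matching this trichotomy with the three ways to compose $\Delta_{\leftarrow}$ and $\Delta_{\rightarrow}$ into a double coproduct.

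Next, the bidendriform property of both splittings is equivalent to verifying the six compatibilities \eqref{troisdeuxrel1}--\eqref{troisdeuxrel6}. Starting from the known $(3,1)$-identity $\Delta(a\ltimes b)=\Delta(a)\ltimes\Delta(b)$ and using the augmented tensor-product formulas of Section 2 to expand, one recovers the explicit formula for $\tilde{\Delta}(a\ltimes b)$ of Lemma \ref{cocalcul}. Splitting each summand according to whether the rightmost leaf of $b$ (equivalently, by the central observation, of $a\ltimes b$) sits in the left or right tensor factor produces the decomposition into $\Delta_{\leftarrow}$ and $\Delta_{\rightarrow}$, and direct matching with the six stated right-hand sides completes the argument.

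The main obstacle will be the careful bookkeeping of unit-boundary terms. The augmented tensor-product formulas distinguish the cases where both, one, or none of the $\mathcal{A}$-coordinates of a factor are unital, and it is exactly these boundary cases that produce the many single-tensor summands $b\otimes a$, $a\otimes b$, $a'b\otimes a''$, $a'\otimes a''\ltimes b$, $ab'_{\leftarrow}\otimes b''_{\leftarrow}$ and $ab'_{\rightarrow}\otimes b''_{\rightarrow}$ appearing in \eqref{troisdeuxrel3}--\eqref{troisdeuxrel6}. Matching these across all six relations is routine but tedious; once this is done, both bidendriform structures are established, and the $(3,2)$-dendriform bialgebra conclusion follows from the preceding remark.
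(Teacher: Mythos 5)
Your proposal is correct and follows essentially the same route as the paper's proof: track the rightmost (decorated) leaf to get the three codendriform axioms from coassociativity, observe that the rightmost leaf of $a\ltimes b$ is that of $b$ so that $\Delta_{\leftarrow}(a\ltimes b)=\Delta(a)\ltimes\Delta_{\leftarrow}(b)$ and $\Delta_{\rightarrow}(a\ltimes b)=\Delta(a)\ltimes\Delta_{\rightarrow}(b)$, and then expand via Lemma \ref{cocalcul} to match the six relations. The only differences are cosmetic: you justify the rightmost-leaf fact explicitly through the quasi-shuffle description of Theorem \ref{produit} (which the paper leaves implicit), while the paper writes out the six unit-boundary expansions that you defer as routine bookkeeping.
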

\begin{proof}
	We will now show that $(\mathcal{A},\prec,\cdot,\succ,\Delta_{\leftarrow},\Delta_{\rightarrow})$ satisfies the Definition~\ref{deftroisdeux}.
	The first constraint is satisfied.
	For the second one, we need to check the three following equalities:{\samepage
	\begin{gather}
		\big(\tilde{\Delta}_{\leftarrow}\otimes {\rm Id}\big)\circ \tilde{\Delta}_{\leftarrow}=\big({\rm Id}\otimes \tilde{\Delta}\big)\circ \tilde{\Delta}_{\leftarrow}, \label{codendri1} \\
		\big(\tilde{\Delta}_{\rightarrow}\otimes {\rm Id}\big)\circ \tilde{\Delta}_{\leftarrow}=\big({\rm Id} \otimes \tilde{\Delta}_{\leftarrow}\big)\circ \tilde{\Delta}_{\rightarrow}, \label{codendri2} \\
		\big(\tilde{\Delta}\otimes {\rm Id}\big)\circ \tilde{\Delta}_{\rightarrow}=\big({\rm Id}\otimes \tilde{\Delta}_{\rightarrow}\big)\circ \tilde{\Delta}_{\rightarrow}. \label{codendri3}
	\end{gather}

}\pagebreak

\noindent
	We prove each case by following the destination of the decorated leaf with $d$ and using that $\tilde{\Delta}=\tilde{\Delta}_{\leftarrow}+\tilde{\Delta}_{\rightarrow}$ to obtain those equalities.
	More precisely, let us consider a tree $t$ of $\mathcal{A}$. In order to show $\eqref{codendri1}$, we look for all the terms of $\big(\tilde{\Delta}\otimes {\rm Id}\big)\circ \tilde{\Delta}\big(t^d\big)$ and $\big({\rm Id}\otimes \tilde{\Delta}\big)\circ \tilde{\Delta}\big(t^d\big)$
	where the decorated leaf is on the left tensor. In the first term, we find $\big(\tilde{\Delta}_{\leftarrow}\otimes {\rm Id}\big)\circ \tilde{\Delta}_{\leftarrow}\big(t^d\big)$, then in the second, we find $\big({\rm Id}\otimes \tilde{\Delta}\big)\circ \tilde{\Delta}_{\leftarrow}\big(t^d\big)$.
	Finally, the coassociativity property gives that $\big(\tilde{\Delta}_{\leftarrow}\otimes {\rm Id}\big)\circ \tilde{\Delta}_{\leftarrow}\big(t^d\big)=\big({\rm Id}\otimes \tilde{\Delta}\big)\circ \tilde{\Delta}_{\leftarrow}\big(t^d\big)$.
	We proceed the same way with~\eqref{codendri2} and~\eqref{codendri3}.
	
	For the last condition, we remind that for all $a,b\in \mathcal{A}$ and for all $\ltimes\in\{ \prec,\cdot,\succ\}$
	\begin{align*}
		\Delta(a\ltimes b)
		&= \Delta(a)\ltimes \Delta(b) = (\Delta_{\rightarrow}(a)+\Delta_{\leftarrow}(a))\ltimes (\Delta_{\rightarrow}(b)+\Delta_{\leftarrow}(b)) \\
		&= \Delta_{\leftarrow}(a)\ltimes \Delta_{\leftarrow}(b)+\Delta_{\leftarrow}(a)\ltimes \Delta_{\rightarrow}(b)+
		\Delta_{\rightarrow}(a)\ltimes \Delta_{\leftarrow}(b)+
		\Delta_{\rightarrow}(a)\ltimes \Delta_{\rightarrow}(b).
	\end{align*}
As a consequence, identifying the terms which get the last leaf of $a\ltimes b$ at the left or at the right of the tensor, we obtain for all $\ltimes\in\{\prec,\cdot,\succ\}$
	\begin{align*}
		&\Delta_{\rightarrow}(a\ltimes b) =\Delta_{\leftarrow}(a)\ltimes \Delta_{\rightarrow}(b)+\Delta_{\rightarrow}(a)\ltimes \Delta_{\rightarrow}(b) =\Delta(a)\ltimes \Delta_{\rightarrow}(b), \\
		&\Delta_{\leftarrow}(a\ltimes b)
		 =\Delta_{\leftarrow}(a)\ltimes\Delta_{\leftarrow}(b)+\Delta_{\rightarrow}(a)\ltimes \Delta_{\leftarrow}(b) =\Delta(a)\ltimes \Delta_{\leftarrow}(b).
	\end{align*}
We need to check the six relations in the definition of $(3,2)$-dendriform bialgebra. I will only detail the computation for~\eqref{troisdeuxrel4}, the other verifications are left to the reader. Let $a,b\in \mathcal{A}$, then
	\begin{align*}
		\tilde{\Delta}_{\rightarrow}(a\prec b)
		={}&(1\otimes a + a\otimes 1+a'\otimes a'')\prec (1\otimes b+b'_{\rightarrow}\otimes b''_{\rightarrow})- 1\otimes a\prec b\\
		={}&1\otimes a\prec b+b'_{\rightarrow}\otimes a\prec b''_{\rightarrow}+ a'\otimes a''\prec b+a'*b'_{\rightarrow}\otimes a\prec b''_{\rightarrow}-1\otimes a\prec b \\
		={}& a'*b'_{\rightarrow}\otimes a''\prec b''_{\rightarrow}+a'\otimes a''\prec b+b'_{\rightarrow}\otimes a\prec b''_{\rightarrow}. \tag*{\qed}
	\end{align*}
\renewcommand{\qed}{}
\end{proof}

\subsubsection{Counting codendriform primitives}

In order to study the primitives of the $(3,2)$-dendriform bialgebra $\mathcal{A}$, we will use the results from the article~\cite{bidend}. Let us remind results from~\cite{bidend} without proofs.

\begin{defi}
	Let $(C,\Delta_{\leftarrow},\Delta_{\rightarrow})$ be a dendriform coalgebra. We define
	\begin{gather*}
		\Prim_{\leftarrow}(C) :=\big\{ c\in C \mid \tilde{\Delta}_{\leftarrow}(c)=0 \big\}, \\
		\Prim_{\rightarrow}(C) :=\big\{ c\in C \mid \tilde{\Delta}_{\rightarrow}(c)=0 \big\}, \\
		\Prim_{\rm Coass}(C) :=\big\{ c\in C \mid \tilde{\Delta}(c)=0 \big\},\\
		\Prim_{\rm Codend}(C) :=\big\{ c\in C \mid \tilde{\Delta}_{\leftarrow}(c)=\tilde{\Delta}_{\rightarrow}(c)=0 \big\}.
	\end{gather*}
\end{defi}

\begin{Rq}
	In particular, $\Prim_{\rm Codend}(C)=\Prim_{\leftarrow}(C)\cap \Prim_{\rightarrow}(C)\subseteq\Prim_{\rm Coass}(C).$
\end{Rq}
Let $(C,\Delta_{\leftarrow},\Delta_{\rightarrow})$ be a dendriform coalgebra. We introduce the following sets defined by induction:
	\begin{gather*}
		\mathcal{P}_C(0)=\{{\rm Id}_C\}, \\
		\mathcal{P}_C(1)=\{\Delta_{\leftarrow},\Delta_{\rightarrow}\}\subseteq \mathcal{L}(C,C^{\otimes 2}), \\
		\mathcal{P}_C(n)=\big\lbrace \big( {\rm Id}^{\otimes(i-1)}\otimes \Delta_{\leftarrow} \otimes {\rm Id}^{\otimes (n-i)} \big)\circ P \mid P\in\mathcal{P}_C(n-1), i\in\IEM{1}{n} \big\rbrace \\
		 \hphantom{\mathcal{P}_C(n)=}{}\cup \big\lbrace \big( {\rm Id}^{\otimes(i-1)}\otimes \Delta_{\rightarrow} \otimes {\rm Id}^{\otimes (n-i)} \big)\circ P \mid P\in\mathcal{P}_C(n-1), i\in\IEM{1}{n} \big\rbrace \! \subseteq\! \mathcal{L}\big(C,C^{\otimes (n+1)}\big).
	\end{gather*}
\begin{defi}
	We say that a dendriform coalgebra $C$ is \emph{connected} if for all $a\in C,$ there exists~$n_a\in\N$ such that for all $P\in\mathcal{P}_C(n_a), P(a)=0.$
\end{defi}

{\samepage With these definitions, we get
\begin{Lemme}
	The coalgebra $(\mathcal{A},\Delta_{\leftarrow},\Delta_{\rightarrow})$ is connected.
\end{Lemme}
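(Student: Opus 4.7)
The plan is to exploit the grading on $\mathcal{A}$ by the number of leaves minus one, namely $\mathcal{A}_n = \K T_n$. First I would note that since $(\mathcal{A}, *, |, \Delta, \varepsilon)$ is graded and connected with $\mathcal{A}_0 = \K\cdot|$, the reduced coproduct $\tilde{\Delta}$ sends $\mathcal{A}_n^+$ into $\bigoplus_{i+j=n,\, i,j\geq 1} \mathcal{A}_i \otimes \mathcal{A}_j$. Indeed, any admissible cut $c$ of a tree $t$ with $n+1$ leaves partitions the leaves between $P^c(t)$ and the forest $G_1^c(t), \dots, G_m^c(t)$, so that the degrees of $G^c(t) = G_1^c(t) * \cdots * G_m^c(t)$ and $P^c(t)$ add up to $n$, and both are strictly positive whenever the cut is neither empty nor total.

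Next I would check that $\tilde{\Delta}_{\leftarrow}$ and $\tilde{\Delta}_{\rightarrow}$ inherit the same property. This is immediate from their explicit definitions: each is obtained from $\tilde{\Delta}$ by keeping only those admissible cuts satisfying a condition on the right-most leaf, which depends only on the combinatorics of the cut and not on the degrees. Hence both $\tilde{\Delta}_{\leftarrow}$ and $\tilde{\Delta}_{\rightarrow}$ are homogeneous of total degree $0$ and send $\mathcal{A}_n^+$ into $\bigoplus_{i+j=n,\, i,j\geq 1} \mathcal{A}_i \otimes \mathcal{A}_j$.

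Working in $\mathcal{A}^+$ with the reduced coproducts, I would then show by induction on $n \geq 1$ that every $P \in \mathcal{P}_{\mathcal{A}^+}(n)$ sends $\mathcal{A}_k^+$ into
\[
\bigoplus_{\substack{i_1 + \cdots + i_{n+1} = k \\ i_j \geq 1}} \mathcal{A}_{i_1} \otimes \cdots \otimes \mathcal{A}_{i_{n+1}}.
\]
The base case $n=1$ is the observation above; the inductive step follows since any $P \in \mathcal{P}_{\mathcal{A}^+}(n)$ is of the form $(Id^{\otimes (i-1)} \otimes \tilde{\Delta}_{\ltimes} \otimes Id^{\otimes(n-i)}) \circ Q$ with $Q \in \mathcal{P}_{\mathcal{A}^+}(n-1)$, and applying $\tilde{\Delta}_{\ltimes}$ to a factor of positive degree splits it into two factors of positive degrees summing to the original.

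Consequently, for $n \geq k$ there is no way to write $k$ as a sum of $n+1$ strictly positive integers, so $P$ vanishes on $\mathcal{A}_k^+$ for every $P \in \mathcal{P}_{\mathcal{A}^+}(n)$. Finally, given an arbitrary $a \in \mathcal{A}^+$, decompose $a = a_1 + \cdots + a_N$ with $a_k \in \mathcal{A}_k^+$ and set $n_a = N$; then $P(a) = 0$ for all $P \in \mathcal{P}_{\mathcal{A}^+}(n_a)$, which is the connectedness condition. There is no real obstacle here; the only subtlety to be careful about is verifying that $\tilde{\Delta}_{\leftarrow}$ and $\tilde{\Delta}_{\rightarrow}$ are homogeneous, which follows directly from the leaf-preserving nature of admissible cuts.
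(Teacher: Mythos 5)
Your proof is correct and follows essentially the same route as the paper: both arguments rest on the observation that the reduced half-coproducts are homogeneous and land in $\bigoplus_{i+j=n,\,i,j\geq 1}\mathcal{A}_i\otimes\mathcal{A}_j$, then iterate to conclude that every element of $\mathcal{P}_{\mathcal{A}^+}(p)$ kills $\mathcal{A}_n^+$ once $p\geq n$. Your version merely spells out the justification (via admissible cuts) that the paper leaves as a one-line observation; the slight imprecision in saying the cut ``partitions the leaves'' (cutting also creates new leaves on $P^c(t)$) does not affect the degree count, which is what matters.
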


}

\begin{proof}
	We notice that for all $\triangleright\in\{ \leftarrow,\rightarrow \}$, we have for all $n\in\N$
	\[
	\Delta_{\triangleright}(\mathcal{A}_n)\subseteq\sum_{k=1}^{n-1} \mathcal{A}_{n-k}\otimes \mathcal{A}_k.
	\]
	Therefore, choosing $t\in \mathcal{A}_n$, $p\in\N$ and $\delta\in\mathcal{P}_C(p)$, we get
	\[
	\delta(\mathcal{A}_n)\subseteq \sum_{\substack{k_1+\dots+k_{p+1}=n \\ k_1,\dots,k_{p}+1>0}} \mathcal{A}_{k_1}\otimes\dots\otimes \mathcal{A}_{k_{p+1}}.
	\]
	So, for all $p\geq n$ and for all $\delta\in\mathcal{P}_C(p)$, $\delta(\mathcal{A}_n)=(0).$
\end{proof}

Following~\cite{bidend}, we get

\begin{thm}
	Let $H$ be a $(2,2)$-dendriform bialgebra, which we assume to be connected as a~dendriform coalgebra. Then $H$ is generated by $\Prim_{\rm Codend}(H)$ as a dendriform algebra.
\end{thm}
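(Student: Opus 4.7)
The approach is by induction on the connectedness filtration coming from the codendriform structure. Write $D \subseteq H$ for the dendriform subalgebra generated by $\Prim_{Codend}(H)$; the goal is to prove $D = H$. For each integer $n \geq 1$ set
\[
V_n = \left\{ a \in H^+ : P(a) = 0 \text{ for every } P \in \mathcal{P}_C(m) \text{ with } m \geq n \right\},
\]
so that $V_1 = \Prim_{Codend}(H)$, $V_n \subseteq V_{n+1}$, and the hypothesis that $H$ is connected as a dendriform coalgebra gives $H^+ = \bigcup_{n \geq 1} V_n$. I would prove $V_n \subseteq D$ by induction on $n$.

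The base case $n=1$ is immediate from the definition of $D$. For the inductive step, take $a \in V_n$ with $n \geq 2$. Since applying $\tilde{\Delta}_{\leftarrow}$ or $\tilde{\Delta}_{\rightarrow}$ to an element of $V_n$ produces a tensor whose entries are killed by every $P \in \mathcal{P}_C(m)$ with $m \geq n-1$, both $\tilde{\Delta}_{\leftarrow}(a)$ and $\tilde{\Delta}_{\rightarrow}(a)$ lie in $V_{n-1} \otimes V_{n-1}$, hence in $D \otimes D$ by the induction hypothesis. The central step is then to exhibit $b \in D$ satisfying $\tilde{\Delta}_{\leftarrow}(b) = \tilde{\Delta}_{\leftarrow}(a)$ and $\tilde{\Delta}_{\rightarrow}(b) = \tilde{\Delta}_{\rightarrow}(a)$; granting this, $a - b \in \Prim_{Codend}(H) \subseteq D$, so $a \in D$.

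To build $b$, I would write $\tilde{\Delta}_{\rightarrow}(a) = \sum_i u_i \otimes v_i$ and $\tilde{\Delta}_{\leftarrow}(a) = \sum_j x_j \otimes y_j$ with all terms in $D$. The $(2,2)$-bidendriform compatibility formulas for $\tilde{\Delta}_{\rightarrow}(u \succ v)$ and $\tilde{\Delta}_{\leftarrow}(y \prec x)$ produce $u \otimes v$ and $y \otimes x$ respectively as principal terms, with the remaining contributions involving factors of strictly lower filtration depth. This suggests taking $b_0 := \sum_i u_i \succ v_i + \sum_j y_j \prec x_j$ as a first approximation and then correcting iteratively; at each stage the correction lives in a lower $V_k$ and is therefore in $D$ by the induction hypothesis, so the procedure stays inside $D$ and terminates by connectedness.

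The main obstacle is verifying that the left- and right-coproduct lifts can be reconciled, i.e., that a single $b \in D$ simultaneously solves both equations. This requires showing that the pair $\bigl( \tilde{\Delta}_{\leftarrow}(a), \tilde{\Delta}_{\rightarrow}(a) \bigr)$ satisfies the cocycle-type identities implied by the codendriform coassociativity relations $\eqref{cotri1}$--$\eqref{cotri3}$ combined with the $(2,2)$-bidendriform compatibilities, which precisely characterize the image of $\bigl(\tilde{\Delta}_{\leftarrow}, \tilde{\Delta}_{\rightarrow}\bigr)$ acting on $D$. Once this compatibility is checked—this is the rigidity lemma underlying Foissy's argument in \cite{bidend}—the inductive construction closes and gives $H = D$, completing the proof.
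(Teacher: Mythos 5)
The paper does not actually prove this statement: it cites Theorem 21 of \cite{bidend} verbatim, so your attempt has to stand on its own. Your skeleton is the right one — filter $H^+$ by the connectedness index $V_n$, observe that the tensor legs of $\tilde{\Delta}_{\leftarrow}(a)$ and $\tilde{\Delta}_{\rightarrow}(a)$ drop one step in the filtration, and reduce everything to producing $b\in D$ with $\tilde{\Delta}_{\leftarrow}(b)=\tilde{\Delta}_{\leftarrow}(a)$ and $\tilde{\Delta}_{\rightarrow}(b)=\tilde{\Delta}_{\rightarrow}(a)$, whence $a-b\in\Prim_{Codend}(H)\subseteq D$. That reduction is sound, and the claim that the legs of the coproducts land in $V_{n-1}$ does follow from the recursive definition of $\mathcal{P}_C(n)$ together with a minimal-representation argument.

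The genuine gap is that the existence of such a $b$ in $D$ is precisely the hard content of the theorem, and you do not establish it. Your candidate $b_0=\sum_i u_i\succ v_i+\sum_j y_j\prec x_j$ does reproduce the ``principal terms'' $u_i\otimes v_i$ in $\tilde{\Delta}_{\rightarrow}$ and $x_j\otimes y_j$ in $\tilde{\Delta}_{\leftarrow}$, but the error terms are not of lower filtration depth: for instance $\tilde{\Delta}_{\rightarrow}(y\prec x)$ contributes $y'x'_{\rightarrow}\otimes y''\prec x''_{\rightarrow}+y'\otimes y''\prec x+x'_{\rightarrow}\otimes y\prec x''_{\rightarrow}$, whose two legs together carry the same total ``degree'' as $a$, so the proposed iterative correction has no termination argument — removing the error requires solving the very same lifting problem again. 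Moreover the two equations interact (adding a $\succ$-term to adjust $\tilde{\Delta}_{\rightarrow}$ perturbs $\tilde{\Delta}_{\leftarrow}$ through the cross-compatibility), and the ``cocycle-type identities which precisely characterize the image of $(\tilde{\Delta}_{\leftarrow},\tilde{\Delta}_{\rightarrow})$ acting on $D$'' are asserted rather than proved; that characterization \emph{is} the rigidity statement you are trying to establish, so invoking it at the end is circular. To close the argument you would need to carry out Foissy's actual mechanism (the explicit induction of his Theorem 21, or the decomposition $H=\Prim_{Codend}(H)\oplus(H\prec H+H\succ H)$ of his Corollary 25 in the graded case) rather than flag it as an obstacle to be checked.
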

\begin{proof}
	See~\cite[Theorem~21]{bidend}.
\end{proof}

This theorem also applies to $(\mathcal{A},\prec+\cdot, \succ,\Delta_{\leftarrow},\Delta_{\rightarrow})$.
So, $\mathcal{A}$ is generated by $\Prim_{\rm Codend}(\mathcal{A})$ with the operations $\prec+\cdot$, $\succ$.
In particular, $\mathcal{A}$ is generated by $\Prim_{\rm Codend}(\mathcal{A})$ as a tridendriform algebra. More generally, this gives

\begin{Prop}
	Let $H$ be a connected $($as a dendriform coalgebra$)$ $(3,2)$-dendriform bialgebra. Then $H$ is generated by $\Prim_{\rm Codend}(H)$ as a tridendriform algebra.
\end{Prop}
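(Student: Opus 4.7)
The plan is to reduce this directly to the $(2,2)$-bidendriform case already established in \cite{bidend} by exploiting the remark following Definition \ref{deftroisdeux}, which tells us that a $(3,2)$-dendriform bialgebra carries two natural $(2,2)$-dendriform bialgebra structures, namely $(H,\prec+\cdot,\succ,\Delta_{\leftarrow},\Delta_{\rightarrow})$ and $(H,\prec,\cdot+\succ,\Delta_{\leftarrow},\Delta_{\rightarrow})$. The dendriform coalgebra structure $(H,\Delta_{\leftarrow},\Delta_{\rightarrow})$ is the same in both cases, so the hypothesis that $H$ is connected as a dendriform coalgebra applies verbatim to either of the two associated bidendriform bialgebras.

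First, I would pick (say) the bidendriform bialgebra $(H,\prec+\cdot,\succ,\Delta_{\leftarrow},\Delta_{\rightarrow})$. Observe that the notion of codendriform primitive depends only on the pair $(\Delta_{\leftarrow},\Delta_{\rightarrow})$, so $\Prim_{Codend}(H)$ is the same whether we view $H$ as a $(3,2)$-dendriform bialgebra or as this auxiliary $(2,2)$-dendriform bialgebra. Applying the previous theorem (Theorem 21 of \cite{bidend}) to the connected bidendriform bialgebra $(H,\prec+\cdot,\succ,\Delta_{\leftarrow},\Delta_{\rightarrow})$ yields that $H$ is generated by $\Prim_{Codend}(H)$ as a dendriform algebra for the two operations $\prec+\cdot$ and $\succ$.

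Finally, one concludes by a soft algebraic observation: any element which can be expressed from $\Prim_{Codend}(H)$ using $\prec+\cdot$ and $\succ$ can a fortiori be expressed using the three operations $\prec$, $\cdot$ and $\succ$. More formally, if $T$ denotes the sub-tridendriform algebra of $H$ generated by $\Prim_{Codend}(H)$, then $T$ is stable under $\prec+\cdot$ and $\succ$, hence contains the dendriform subalgebra generated by $\Prim_{Codend}(H)$ for those two operations, which by the previous step equals $H$. Therefore $T=H$, as required.

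There is essentially no obstacle here beyond bookkeeping; the only thing to double-check is that the definition of $\Prim_{Codend}$ really is insensitive to the splitting of the product (it is, since it only involves $\Delta_{\leftarrow}$ and $\Delta_{\rightarrow}$) and that connectedness of the dendriform coalgebra is shared between the $(3,2)$-bialgebra and its two associated $(2,2)$-bialgebras (it is, for the same reason). One could just as well have used the other bidendriform structure $(H,\prec,\cdot+\succ,\Delta_{\leftarrow},\Delta_{\rightarrow})$; the argument is symmetric.
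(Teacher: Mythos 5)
Your argument is correct and is essentially the paper's own: the paper likewise applies Theorem 21 of \cite{bidend} to the associated bidendriform bialgebra $(H,\prec+\cdot,\succ,\Delta_{\leftarrow},\Delta_{\rightarrow})$ and then observes that generation under $\prec+\cdot$ and $\succ$ implies generation as a tridendriform algebra. Your extra care in checking that $\Prim_{Codend}$ and connectedness depend only on $(\Delta_{\leftarrow},\Delta_{\rightarrow})$ is sound bookkeeping that the paper leaves implicit.
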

\begin{Rq}
	By~\cite{bidend}, we have
	\begin{align*}
		\mathcal{A}&=\Prim_{\rm Codend}(\mathcal{A})\oplus (\mathcal{A}\preceq \mathcal{A}+\mathcal{A}\succ\mathcal{A} ) \\
		&=\Prim_{\rm Codend}(\mathcal{A})\oplus (\mathcal{A}\prec \mathcal{A}+\mathcal{A}\succeq\mathcal{A} ) \\
		&=\Prim_{\rm Codend}(\mathcal{A})+ (\mathcal{A}\prec \mathcal{A}+\mathcal{A}\cdot \mathcal{A}+\mathcal{A}\succ\mathcal{A} ).
	\end{align*}
\end{Rq}
\begin{thm}	\label{thmcodend}
	Let $H$ be a graded $(2,2)$-dendriform bialgebra by $\N$ such that for all $n\in\N$, $H_n$~is finite dimensional and $H_0=(0)$. We consider the formal series
	\begin{gather*}
		P(X)=\sum_{n=1}^{+\infty}\dim(\Prim_{\rm Codend}(H)_n)X^n, \qquad R(X)=\sum_{n=1}^{+\infty}\dim(H_n)X^n.
	\end{gather*}
	Then $P(X)=\dfrac{R(X)}{(1+R(X))^2}$.
\end{thm}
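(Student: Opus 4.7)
The plan is to deduce the Hilbert series identity from the stronger statement that $H$ is isomorphic, as a graded dendriform algebra, to the free dendriform algebra on $V := \Prim_{Codend}(H)$. Proposition \ref{thm:Primcodendgen} already supplies the direct-sum splitting $H = V \oplus H^{D2}$, so $V$ generates $H$ as a dendriform algebra; the full rigidity theorem for connected bidendriform bialgebras proved in \cite{bidend} upgrades this to an isomorphism $H \simeq \Dend(V)$ of graded dendriform algebras (the hypothesis $H_0 = (0)$ together with graded finiteness ensures the connectedness needed to apply the rigidity theorem). I would start by invoking that result.

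Once freeness is granted, the theorem becomes a routine manipulation of Hilbert series of the dendriform operad. By the operadic description
$$\Dend(V) \;=\; \bigoplus_{n\geq 1} \Dend(n) \otimes V^{\otimes n},$$
together with $\dim \Dend(n) = C_n$, the $n$-th Catalan number (counting planar binary trees with $n$ internal nodes, under the convention $C_1 = 1,\, C_2 = 2,\, C_3 = 5,\ldots$), one obtains
$$R(X) \;=\; \sum_{n \geq 1} C_n\, P(X)^n.$$

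The concluding step uses the Catalan functional equation. Writing $C(z) = \sum_{n \geq 0} C_n z^n$, the classical identity $C(z) = 1 + zC(z)^2$ rearranges, upon setting $D(z) := C(z) - 1 = \sum_{n \geq 1} C_n z^n$, into $D(z) = z(1 + D(z))^2$. Substituting $z = P(X)$ and observing that $D(P(X)) = R(X)$ yields $R = P(1 + R)^2$, from which $P(X) = \dfrac{R(X)}{(1+R(X))^2}$ follows at once.

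The main obstacle is concealed in the first step: upgrading Proposition \ref{thm:Primcodendgen} to a genuine freeness statement. That proposition only gives the decomposition $H = V \oplus H^{D2}$, which is not sufficient by itself; the rigidity theorem of \cite{bidend} is what excludes unexpected relations among dendriform monomials on $V$, typically via an argument using the codendriform coproducts $\Delta_{\leftarrow}$ and $\Delta_{\rightarrow}$ together with the connectedness hypothesis to perform an induction on degree. Once freeness is in hand, the remaining Hilbert series computation is purely formal.
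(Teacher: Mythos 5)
Your argument is correct and is essentially the paper's: the paper proves this theorem by citing Corollary~37 of \cite{bidend}, and your proposal simply reconstructs that corollary's proof, namely the rigidity theorem $H\simeq\Dend(\Prim_{Codend}(H))$ followed by the Catalan generating-function identity $D(z)=z(1+D(z))^2$ applied to $R=D(P)$. The one point you rightly flag — that the splitting of Proposition~\ref{thm:Primcodendgen} alone does not give freeness and the full rigidity theorem of \cite{bidend} must be invoked — is exactly the content the paper outsources to that reference, so nothing is missing.
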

\begin{proof}
	See~\cite[Corollary 37]{bidend}.
\end{proof}

\begin{Rq}
	We may ask ourselves why we want to compute codendriform primitives. We already have the generator \Y. But it is a generator for the tridendriform structure meanwhile $\Prim_{\rm Codend}(\mathcal{A})$ generates $\mathcal{A}$ as a dendriform algebra for both $(\prec+\cdot,\succ)$ and $(\prec,\cdot+\succ)$.
\end{Rq}
All the theorems cited above are valid for $(\mathcal{A}^+,\prec,\cdot,\succ,\Delta_{\leftarrow},\Delta_{\rightarrow})$.
Let us compute the formal series:
\[
R(X)=\sum_{n=1}^{+\infty}\dim(\mathcal{A}_n)X^n.
\]
\begin{defi}[non-redundant bracketing]
	Let $n\in \N$.
	Let $w$ be a word over the alphabet~$\{|\}$ of $n+1$ symbols.
	We call \emph{non-redundant bracketing} of $w$ all insertions of pairs of parenthesis in~$w$ which do not contain subwords of the form $(|)$ and such that $w$ is not between brackets.
	We will denote the set of non-redundant bracketing of any word $w$ by $\mathcal{P}_n$. The set of non-redundant bracketings of all words is denoted by $\mathcal{P}$.
\end{defi}
\begin{Rq}
	We have $\mathcal{P}=\bigcup_{n\geq 0} \mathcal{P}_n$.
	Moreover, we will denote $\mathcal{T}:=\bigcup_{n\geq 0} T_n$.
\end{Rq}
\begin{Eg}
	Let us take the word $w=|||$, the non-redundant bracketings of $w$ are~$|(||)$,~$(||)|$ and~$|||$. So $|\mathcal{P}_2|=3.$
\end{Eg}

It is known that $\dim(\mathcal{A}_n)=|\mathcal{P}_n|$. Referring to~\cite{OEIS}, we know that $|\mathcal{P}_n|=a_n$ where $a_n$ is the $n$-th second Schr\"oder's problem number.
The numbers $a_n$ are called \emph{small Schr\"oder's numbers} or super Catalan numbers.
With this reference, we also have
\begin{gather*}
	\forall n\in\N, \, n\geq 2,\, (n+1)a_n=(6n-3)a_{n-1}-(n-2)a_{n-2} \qquad \text{and} \qquad a_0=1, \, a_1=1.
\end{gather*}

Thanks to this, with the help of a computer we get
\begin{gather*}
	R(X)=X+3X^2+11X^3+45X^4+197X^5+903X^6+4279X^7+20793 {{X}^{8}} \\
\hphantom{R(X)=}{} +103049 {{X}^{9}}+518859 {{X}^{10}} +2646723 {{X}^{11}}+13648869 {{X}^{12}}+71039373 {{X}^{13}} \\
\hphantom{R(X)=}{} +372693519 {{X}^{14}}+1968801519 {{X}^{15}}+10463578353 {{X}^{16}}+55909013009 {{X}^{17}} \\
\hphantom{R(X)=}{} +300159426963 {{X}^{18}}+1618362158587 {{X}^{19}}+8759309660445 {{X}^{20}}+\cdots, \\
	P(X)=X+{{X}^{2}}+2 {{X}^{3}}+6 {{X}^{4}}+22 {{X}^{5}}+90 {{X}^{6}}+394 {{X}^{7}}+1806 {{X}^{8}}+8558 {{X}^{9}}\\
\hphantom{P(X)=}{} +41586 {{X}^{10}}+206098 {{X}^{11}}+1037718 {{X}^{12}}+5293446 {{X}^{13}}+27297738 {{X}^{14}} \\
\hphantom{P(X)=}{} +142078746 {{X}^{15}} +745387038 {{X}^{16}}+3937603038 {{X}^{17}}+20927156706 {{X}^{18}}\\
\hphantom{P(X)=}{} +111818026018 {{X}^{19}}+600318853926 {{X}^{20}}+\cdots.
\end{gather*}

Other properties of small Schr\"oder's numbers are given in~\cite{OEIS}.

Looking at the first terms of $P$, we suspect that $P$ is the series of \emph{large Schr\"oder's numbers} denoted by $(A_n)_{n\in\N}$ defined by $A_0=0$, $A_1=1=A_2$, then for all $n\geq 3$, $A_{n}=2a_{n-2}$.

\begin{Rq}\label{Rq:series}
	In particular, we want to show
	\[
	P(X)=X+X^2+2X^2R(X).
	\]
\end{Rq}

Looking once more at~\cite{OEIS}, we find that the formal series of $R(X)$ is
\begin{equation*}
	1+R(X)=\frac{1+X-\sqrt{1-6X+X^2}}{4X}.
\end{equation*}
Referring to~\cite{OEIS} and using Theorem~\ref{thmcodend}, we obtain
\begin{Prop}\label{nbcodend}
	\[
	P(X)=\sum_{n=0}^{+\infty} A_nX^n
	\]
	where $A_0=0$, $A_1=A_2=1$ and for all $n\geq 3$, $A_n=2a_{n-2}.$
\end{Prop}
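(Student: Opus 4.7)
The plan is to derive the identity stated in Remark~\ref{Rq:series}, namely
\[
P(X) = X + X^2 + 2X^2 R(X),
\]
and then simply read off the coefficients. Once this identity is established, one gets $[X^0]P = 0 = A_0$, $[X^1]P = 1 = A_1$, $[X^2]P = 1 = A_2$, and for $n \geq 3$, $[X^n]P = 2[X^{n-2}]R = 2a_{n-2} = A_n$, which is exactly the claim.

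To prove the identity, I would start from the closed-form expression for $P(X)$ provided by Theorem~\ref{thmcodend}, $P(X) = \dfrac{R(X)}{(1+R(X))^2}$, combined with the known generating function $1 + R(X) = \dfrac{1+X-\sqrt{1-6X+X^2}}{4X}$. Setting $S := \sqrt{1-6X+X^2}$, this gives $R(X) = \dfrac{1-3X-S}{4X}$ and
\[
(1+R(X))^2 = \frac{(1-X)^2 - (1+X)S}{8X^2}.
\]
Substituting into $P = R/(1+R)^2$ yields
\[
P(X) = \frac{2X(1-3X-S)}{(1-X)^2 - (1+X)S}.
\]
The main algebraic step is then to rationalize the denominator by multiplying numerator and denominator by $(1-X)^2 + (1+X)S$: using $S^2 = 1 - 6X + X^2$, the denominator collapses to $16X^2$, and after expansion and cancellation, the numerator simplifies to $4X^2(3 - X - S)$. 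This produces the compact closed form
\[
P(X) = \frac{X(3 - X - S)}{2}.
\]

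The final step is to check that this closed form equals $X + X^2 + 2X^2 R(X)$. Plugging in the expression for $R$, one has $2X^2 R(X) = \dfrac{X - 3X^2 - XS}{2}$, and adding $X + X^2$ yields exactly $\dfrac{3X - X^2 - XS}{2}$, matching the expression for $P(X)$. The identity of Remark~\ref{Rq:series} is thereby established, and extracting coefficients gives the proposition.

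The main obstacle is the purely computational step of rationalizing $(1-X)^2 - (1+X)S$ and seeing the miraculous cancellations that reduce the denominator to $16X^2$ and the numerator to a factor of $4X^2$. All the other steps are routine once the closed form of $R(X)$ from \cite{OEIS} is accepted; if one prefers to avoid this closed form, an alternative route is to start directly from the functional equation satisfied by $R$ (which can be read off from the $\mathcal{P}_n$ construction of non-redundant bracketings, namely $R = X + R^2/(1-R)$ or an equivalent form) and manipulate $R/(1+R)^2$ algebraically to reach $X + X^2 + 2X^2 R$; this bypasses the radical entirely.
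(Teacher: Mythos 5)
Your proof is correct and follows essentially the same route as the paper: both start from $P=R/(1+R)^2$ (Theorem \ref{thmcodend}) together with the closed form $1+R=\frac{1+X-\sqrt{1-6X+X^2}}{4X}$, and reduce the claim to the identity $P(X)=X+X^2+2X^2R(X)$ of Remark \ref{Rq:series} by rationalizing the radical, after which the coefficients are read off. The only difference is bookkeeping — the paper first establishes $\frac{R}{1+R}=X+2XR$ and $\frac{X}{1+R}$ separately and combines them, whereas you rationalize $R/(1+R)^2$ in one pass — and your intermediate computations (denominator $16X^2$, numerator $4X^2(3-X-S)$, hence $P=\frac{X(3-X-S)}{2}$) check out.
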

\begin{proof}
	To prove this proposition, we provide an intermediate computation
	\begin{align}
		\frac{R(X)}{1+R(X)}
		&=\frac{1-3X-\sqrt{1-6X+X^2}}{1+X-\sqrt{1-6X+X^2}} \nonumber \\
		&=\frac{\big(1-3X-\sqrt{1-6X+X^2}\big)\big(1+X+\sqrt{1-6X+X^2}\big)}{\big(1+X-\sqrt{1-6X+X^2}\big)\big(1+X+\sqrt{1-6X+X^2}\big)} \nonumber \\
		&=\frac{4X-4X^2-4X\sqrt{1-6X+X^2}}{8X} \nonumber \\
		&=X+\frac{1}{2}\big(1-3X-\sqrt{1-6X+X^2}\big) \nonumber \\
		&=X+2XR(X). \label{cal:PrimCoass}
	\end{align}
Then
 \begin{align*}
	&\frac{R(X)}{(1+R(X))^2}=\frac{X}{1+R(X)}+2X\frac{R(X)}{1+R(X)} =\frac{X}{1+R(X)}+2X^2+4X^2R(X).
\end{align*}
Now we compute the first term of the right part of this equality:
\begin{align*}
	\frac{X}{1+R(X)}&=\frac{4X^2}{1+X-\sqrt{1-6X+X^2}} \\
					&=\frac{4X^2\big(1+X+\sqrt{1-6X+X^2}\big)}{\big(1+X-\sqrt{1-6X+X^2}\big)\big(1+X+\sqrt{1-6X+X^2}\big)} \\
					&=\frac{4X^2+4X^3+4X^2\sqrt{1-6X+X^2}}{8X} \\
					&=\frac{X}{2}\big(1+X+\sqrt{1-6X+X^2}\big) \\
					&=\frac{X}{2}(1+R(X))+X\sqrt{1-6X+X^2}.
\end{align*}
Finally,
\begin{align*}
	\frac{R(X)}{(1+R(X))^2}
	={}&2X^2(1+R(X))+X\sqrt{1\!-\!6X\!+\!X^2}+2X^2+X\big(1-3X-\sqrt{1\!-\!6X\!+\!X^2}\big) \\
	={}& X+X^2+2X^2R(X). \tag*{\qed}
\end{align*}
\renewcommand{\qed}{}
\end{proof}

As a consequence, we deduce the two following equalities:
\begin{align*}
	&P(X)=\frac{3}{2}X+\frac{3}{2}X^2-\frac{X}{2}\sqrt{1-6X+X^2}, \qquad
	R(X)=\frac{P(X)}{2X^2}-\frac{1}{2X}-\frac{3}{2}.
\end{align*}

\subsubsection{Codendriform primitives generation}\label{genprim}

Now we exactly know the dimensions of codendriform primitives. For degree 1 and 2, we have
\begin{gather*}
	\Prim_{\rm Codend}(\mathcal{A})_1=\big\langle \Y \big\rangle, \qquad
	\Prim_{\rm Codend}(\mathcal{A})_2=\big\langle \balais \big\rangle.
\end{gather*}
\begin{Rq}
	Note that $\balaisg -\balaisd$ is an element of $\Prim_{\rm Coass}(\mathcal{A})$.
	However,
	\begin{align*}
		\Delta_{\leftarrow}\left(\balaisg-\balaisd\right)=0-\Y \otimes \Y.
	\end{align*}
	So, it is not an element of $\Prim_{\rm Codend}(\mathcal{A})$.
\end{Rq}
From equations \eqref{troisdeuxrel1}--\eqref{troisdeuxrel6} we deduce the following equivalences:

\begin{Prop}
	For all $a,b\in \mathcal{A}$, we have
	\begin{gather*}
	(b\in\Prim_{\rightarrow}(\mathcal{A}) \  \text{\normalfont and} \  a\in\Prim_{\rm Coass}(\mathcal{A})) \iff \Delta_{\rightarrow}(a\cdot b)=0, \\
	\hphantom{(b\in\Prim_{\rightarrow}(\mathcal{A})\  \text{\normalfont and} \ }{} b\in\Prim_{\leftarrow}(\mathcal{A}) \iff \Delta_{\leftarrow}(a\cdot b)=0.
	\end{gather*}
\end{Prop}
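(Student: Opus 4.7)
\emph{Plan.} Throughout I shall interpret $\Delta_{\leftarrow}$ and $\Delta_{\rightarrow}$ in the statement as the reduced coproducts $\tilde{\Delta}_{\leftarrow}$ and $\tilde{\Delta}_{\rightarrow}$, consistent with the definitions of $\Prim_{\leftarrow}$, $\Prim_{\rightarrow}$ and $\Prim_{Coass}$. The $(\Leftarrow)$ direction of both equivalences is immediate by substituting the hypotheses into identities~\eqref{troisdeuxrel1} and~\eqref{troisdeuxrel2}: if $\tilde{\Delta}_{\leftarrow}(b)=0$, both terms of $\tilde{\Delta}_{\leftarrow}(a\cdot b)$ carry a vanishing factor; if $\tilde{\Delta}(a)=0$ and $\tilde{\Delta}_{\rightarrow}(b)=0$, the three terms of $\tilde{\Delta}_{\rightarrow}(a\cdot b)$ all vanish in the same way. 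The cases $a=0$ or $b=0$ are immediate, and by linearity I may take $a,b$ to be homogeneous of positive degrees $p$ and $q$.

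For the $(\Rightarrow)$ direction of the second equivalence the key step is the injectivity of $L_a\colon y\mapsto a\cdot y$ on homogeneous $y\in\mathcal{A}^+$, for $a$ homogeneous and nonzero. This follows in the style of Corollary~\ref{exceptionnel1}: every quasi-shuffle $\sigma$ appearing in the expansion of $a\cdot y$ (those with $\sigma^{-1}(1)=\{1,k+1\}$) places the merged leaf at position $\nf(a)$, so applying the lightening $\Delta_{\nf(a)}$ to $a\cdot y$ recovers $\lambda\,a\otimes y$ with $\lambda\geq 1$, and $a\cdot y=0$ forces $y=0$. Now decompose $\tilde{\Delta}_{\leftarrow}(b)=\sum_{q_1\geq 1}B_{q_1}$ by left-tensor degree and induct on $q_1$: at the bidegree $(q_1,p+q-q_1)$ of $\tilde{\Delta}_{\leftarrow}(a\cdot b)=0$, the cross term $a'*b'_{\leftarrow}\otimes a''\cdot b''_{\leftarrow}$ only involves $B_{q_1'}$ with $q_1'<q_1$ (vanishing by induction), so the equation reduces to $\sum_k x_k\otimes a\cdot y_k=0$ for a minimal writing $B_{q_1}=\sum_k x_k\otimes y_k$; linear independence of $(x_k)$ (minimality) combined with $L_a$-injectivity then gives each $y_k=0$, so $B_{q_1}=0$ and $b\in\Prim_{\leftarrow}$.

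For the $(\Rightarrow)$ direction of the first equivalence the same idea applies but with three terms interacting. The degenerate cases $p=1$ or $q=1$ are immediate (one of $\tilde{\Delta}(a),\tilde{\Delta}_{\rightarrow}(b)$ is automatically zero, and the surviving single-term equation is finished by $L_a$- or $R_b$-injectivity), so I assume $p,q\geq 2$. At the extreme bidegree $(p+q-2,2)$ only the cross term contributes, at parameters $(p_1,q_1)=(p-1,q-1)$; writing $A_{p-1}=U\otimes\Y$ and $B_{q-1}=W\otimes\Y$ (since $\mathcal{A}_1=\K\Y$), this contribution equals $(U*W)\otimes\balais$, and Corollary~\ref{exceptionnel1} forces $U=0$ or $W=0$. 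Descending through bidegrees $(p+q-k,k)$ for $k=3,4,\ldots$ and using linear independence of $\{e\cdot\Y:e\in T_{k-1}\}$ in $\mathcal{A}_k$ (again via $\Delta_k$) together with the injectivity of $*$, each step kills one further stratum $A_{p-k+1}$ of $\tilde{\Delta}(a)$ or $B_{q-k+1}$ of $\tilde{\Delta}_{\rightarrow}(b)$; once one of the two reduced coproducts is fully exhausted, the residual equation collapses to $b'_{\rightarrow}\otimes a\cdot b''_{\rightarrow}=0$ or $a'\otimes a''\cdot b=0$, and the corresponding injectivity of $L_a$ or $R_b$ concludes.

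The main obstacle will be the bookkeeping of this double descent, especially the symmetric sub-case $U=W=0$ in which the first-level analysis provides no preferred side to kill; to continue one descends to bidegree $(p+q-4,4)$ and exploits the linear independence of the nine products $c_\nu\cdot d_j$ for $c_\nu,d_j\in T_2$ inside $\mathcal{A}_4$ to restore the disjunction, then iterates as before.
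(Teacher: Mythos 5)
Your handling of the easy direction and of the second equivalence is correct. For $\Delta_{\leftarrow}(a\cdot b)=0\Rightarrow b\in\Prim_{\leftarrow}(\mathcal{A})$, your upward induction on the left-tensor degree of $\tilde{\Delta}_{\leftarrow}(b)$, combined with the injectivity of $y\mapsto a\cdot y$ (a special case of Theorem \ref{thm:inthom}), is a clean reorganization of what the paper does: the paper instead isolates the stratum of \emph{maximal} left degree, uses minimality plus Theorem \ref{thm:inthom} to get free families, and reaches the contradiction $0=1$ by pairing with dual-basis functionals. Both arguments rest on the same lightening maps $\Delta_k$, so this part is essentially the paper's route. (Two small caveats: the cases $a=0$ or $b=0$ are not ``immediate'' but genuine exceptions to the stated equivalence --- for $a=0$ one has $\Delta_{\rightarrow}(a\cdot b)=0$ for every $b$ --- and the reduction to homogeneous $a,b$ ``by linearity'' needs a word, since distinct bidegree pairs $(\deg a_i,\deg b_j)$ can have equal sums; the paper is equally silent on both points.)

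The genuine gap is in the first equivalence. Your descent through the bidegrees $(p+q-k,k)$ produces at each level only a disjunction (``some stratum of $\tilde{\Delta}(a)$ vanishes or some stratum of $\tilde{\Delta}_{\rightarrow}(b)$ vanishes''), and nothing in the outline guarantees that these disjunctions resolve coherently so that one side is \emph{fully} exhausted before the terms $a'\otimes a''\cdot b$ and $b'_{\rightarrow}\otimes a\cdot b''_{\rightarrow}$ of \eqref{troisdeuxrel2} start to interfere; the sentences ``each step kills one further stratum \dots{} then iterates as before'' are exactly what needs proof, and the ad hoc treatment of the sub-case $U=W=0$ does not supply it. The repair is to avoid the iteration: assume both $\tilde{\Delta}(a)\neq 0$ and $\tilde{\Delta}_{\rightarrow}(b)\neq 0$, and let $j_A$ (resp.\ $j_B$) be the smallest $j$ such that the component of $\tilde{\Delta}(a)$ in $\mathcal{A}_{p-j}\otimes\mathcal{A}_{j}$ (resp.\ of $\tilde{\Delta}_{\rightarrow}(b)$ in $\mathcal{A}_{q-j}\otimes\mathcal{A}_{j}$) is nonzero. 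In the bidegree $(p+q-j_A-j_B,\,j_A+j_B)$ component of $\Delta_{\rightarrow}(a\cdot b)=0$, the terms $a'\otimes a''\cdot b$ and $b'_{\rightarrow}\otimes a\cdot b''_{\rightarrow}$ cannot contribute (they would force $j_B\geq q$ or $j_A\geq p$), and in the cross term only the product of the two extremal strata survives; writing these strata minimally as $\sum_i x_i\otimes y_i$ and $\sum_j z_j\otimes w_j$, applying $Id\otimes\Delta_{j_A+1}$ to $\sum_{i,j}(x_i*z_j)\otimes(y_i\cdot w_j)=0$ gives $x_i*z_j=0$ for all $i,j$, hence by Theorem \ref{thm:inthom} some $x_i$ or some $z_j$ vanishes, contradicting minimality. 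This yields $\tilde{\Delta}(a)=0$ or $\tilde{\Delta}_{\rightarrow}(b)=0$ in one stroke, after which the residual single-term equation is finished by the injectivity of $y\mapsto a\cdot y$ or $x\mapsto x\cdot b$ exactly as you say. This ``extremal stratum'' device is in substance the paper's Case 2, which is why the paper can dispatch the first equivalence with ``we use similar ideas''.
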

\begin{proof}
	The $\implies$ implications come straightforward from the codendriform relations.
	For the other implication of the second equivalence, let us consider $a,b\in \mathcal{A}$ such that $\Delta_{\leftarrow}(a\cdot b)=0$.
	Then we suppose by contradiction that $\Delta_{\leftarrow}(b)\neq 0$. Without loss of generality, we can suppose that the writings of $\Delta_{\leftarrow}(b)$ as a sum of tensors are minimal for the number of terms.
	
	\textit{Case 1}: $\tilde{\Delta}(a)=0$.
	Using codendriform relations, we know that
	\[
	\Delta_{\leftarrow}(a\cdot b)=b'_{\leftarrow}\otimes a\cdot b''_{\leftarrow}=0.
	\]
	Defining $\mathfrak{B}=\{ c\otimes d \mid c\otimes d \text{ appears in } \Delta_{\leftarrow}(a\cdot b) \text{ such that } \deg(c) \text{ is maximal}\}$, we get
	\[
	\Delta_{\leftarrow}(b)=\sum_{\mathfrak{B}} b'_{\leftarrow}\otimes b''_{\leftarrow}+\sum_{\text{the rest}} b'_{\leftarrow}\otimes b''_{\leftarrow}.
	\]
	Let also denote by $\mathfrak{B}_1$ and $\mathfrak{B}_2$ the respective projections of $\mathfrak{B}$ over its first and second component. Remark that in this case, if $b'_{\leftarrow}\otimes b''_{\leftarrow}$ appears in $\Delta_{\leftarrow}(b)$ such that $b'_{\leftarrow}\otimes b''_{\leftarrow}\in\mathfrak{B}$ then $b''_{\leftarrow}$ is minimal for the degree. For reasons of degrees, we must have
	\[
	\sum_{\mathfrak{B}} b'_{\leftarrow}\otimes a\cdot b''_{\leftarrow}=0.
	\]
	As the writings of $\Delta_{\leftarrow}(b)$ are supposed minimal, this implies that the family $(b'_{\leftarrow})$ is free and the same for $(b''_{\leftarrow})$.
Applying Theorem~\ref{thm:inthom}, we find that the family $(a\cdot b''_{\leftarrow})$ is free.
We complete those free families $(b'_{\leftarrow})_{b'_{\leftarrow}\in\mathfrak{B}_1}$ and $(a\cdot b''_{\leftarrow})_{ b''_{\leftarrow}\in\mathfrak{B}_2}$ into bases respectively of $T_{\deg(\mathfrak{B}_1)}$ and $T_{\deg(a)+\deg(\mathfrak{B}_2)}$. We consider $f$, $g$ elements of the dual basis verifying $f(b'_{\leftarrow})=1$ for one and only one element of~$\mathfrak{B}_1$ and $g(a\cdot b''_{\leftarrow})=1$ for one and only one element of $(a\cdot b''_{\leftarrow})_{b''_{\leftarrow}\in\mathfrak{B}_2}$. Choosing correctly $(f,g)$, we find
	\begin{align*}
		0=(f\otimes g)\bigg( \sum_{\mathfrak{B}} b'_{\leftarrow}\otimes a\cdot b'_{\leftarrow} \bigg)=1.
	\end{align*}
	This is a contradiction. As a consequence, we find $\Delta_{\leftarrow}(b)=0$.
	
	\textit{Case 2}: $\tilde{\Delta}(a)\neq 0$.
	Let us denote the set
	\[
	\mathfrak{A} = \big\{ c\otimes d\mid c\otimes d \text{ appears in }\tilde{\Delta}(a) \text{ and } c \text{ has maximal degree}\big\}.
	\] We define $\mathfrak{B}$ the same way as the previous case.
	We take back the notations for projections over first and second component of $\mathfrak{B}$ and we expand them for $\mathfrak{A}$. The tensors of $\Delta_{\leftarrow}(a\cdot b)$ from which the first term has maximal degree are
	\[
	\sum_{\mathfrak{A}}\sum_{\mathfrak{B}} a'*b'_{\leftarrow}\otimes a''\cdot b''_{\leftarrow}.
	\]
	Without loss of generality, we can suppose that the writings of $\Delta_{\leftarrow}(b)$ and $\tilde{\Delta}(a)$ are minimal for the number of terms.
		We check $(a'* b'_{\rightarrow})_{(a',b'_{\rightarrow})\in\mathfrak{A}_1\times \mathfrak{B}_1}$ and $(a''\prec b''_{\rightarrow})_{(a'',b''_{\rightarrow})\in\mathfrak{A}_2\times \mathfrak{B}_2}$ are free families of $\mathcal{A}$.
	For this, we will use the Theorem~\ref{thm:inthom}. In fact, we consider for all $(a',b')\in\mathfrak{A}_1\times \mathfrak{B}_1$ a scalar $\lambda_{(a',b'_{\rightarrow})}$ such that
	\begin{align*}
		\sum_{\mathfrak{A}_1\times \mathfrak{B}_1}\lambda_{(a',b'_{\rightarrow})}a'*b'_{\rightarrow}=0.
	\end{align*}
	By construction of $\mathfrak{A}_1\times \mathfrak{B}_1,$ hypotheses of Theorem~\ref{thm:inthom} are satisfied. As a consequence, $(a'*b'_{\rightarrow})_{(a',b'_{\rightarrow})\in\mathfrak{A}_1\times \mathfrak{B}_1}$ is a free family. Following the same method, we have that the family $(a''\prec b''_{\rightarrow})_{(a'',b''_{\rightarrow})\in\mathfrak{A}_2\times \mathfrak{B}_2}$ is also free.
	 We can complete each of these families into respective basis of $T_{\deg(\mathfrak{B}_1)+\deg(\mathfrak{A}_1)}$ and $T_{\deg(\mathfrak{B}_2)+\deg(\mathfrak{A}_2)}$.
	Using elements $f$, $g$ of the dual basis defined by ${f(a'*b'_{\leftarrow})=1}$ for one and only one element of $(a'*b'_{\leftarrow})_{a'\in\mathfrak{A}_1,b'_{\leftarrow}\in\mathfrak{B}_1}$ and $g(a''\cdot b''_{\leftarrow})_{a''\in\mathfrak{A}_2,b''_{\leftarrow}\in\mathfrak{B}_2}=1$ for one and only one element of $(a''\cdot b''_{\leftarrow})$.
	As a consequence, with a good choice for $(f,g)$, we get
	\begin{align*}
		0=(f\otimes g)\bigg( 	\sum_{\mathfrak{A}}\sum_{\mathfrak{B}} a'*b'_{\leftarrow}\otimes a''\cdot b''_{\leftarrow} \bigg)=1.
	\end{align*}
This is a contradiction. We then deduce that $\tilde{\Delta}(a)=0$ or $\Delta_{\leftarrow}(b)=0$.
	
	For the first equivalence, we use similar ideas.
\end{proof}

This allows us to say
\[
a\cdot b\in \Prim_{\rm Codend}(\mathcal{A}) \iff (b\in\Prim_{\rm Codend}(\mathcal{A}) \  \text{and} \  a\in\Prim_{\rm Coass}(\mathcal{A})).
\]
This defines a map
\begin{equation*}
	\theta_{\bullet}\colon \ \begin{cases}
		\Prim_{\rm Coass}(\mathcal{A})\otimes \Prim_{\rm Codend}(\mathcal{A}) \rightarrow \Prim_{\rm Codend}(\mathcal{A}), \\
		a\otimes b \mapsto a\cdot b.
	\end{cases}
\end{equation*}

Using similar arguments, we also prove
\begin{Prop}
	For all $a,b\in \mathcal{A}$,
	\begin{gather*}
		\Delta_{\leftarrow}(a\succ b)=0\iff (a\in\Prim_{\rm Coass}(\mathcal{A}) \  \text{\rm and} \  b\in \Prim_{\leftarrow}(\mathcal{A})), \\
		\Delta_{\rightarrow}(a\prec b)=0\iff (a\in\Prim_{\rm Coass}(\mathcal{A}) \  \text{\rm and} \  b\in \Prim_{\rightarrow}(\mathcal{A})).
	\end{gather*}
\end{Prop}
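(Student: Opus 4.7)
The proof adapts the strategy used in the preceding proposition. The $(\Leftarrow)$ directions are immediate substitutions: if $\tilde\Delta(a)=0$ and $\tilde\Delta_{\leftarrow}(b)=0$, every summand on the right-hand side of \eqref{troisdeuxrel6} vanishes, and similarly all three summands of \eqref{troisdeuxrel4} vanish when $\tilde\Delta(a)=0$ and $\tilde\Delta_{\rightarrow}(b)=0$.

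For the forward direction of the first equivalence, assume $a,b$ are nonzero and that
\[
a'*b'_{\leftarrow}\otimes a''\succ b''_{\leftarrow} + a'\otimes a''\succ b + a*b'_{\leftarrow}\otimes b''_{\leftarrow} = 0.
\]
The first-tensor degree of the third summand, $\deg(a)+\deg(b'_{\leftarrow})$, strictly exceeds those of the first ($\deg(a')+\deg(b'_{\leftarrow})$ with $\deg(a')<\deg(a)$) and of the second ($\deg(a')<\deg(a)\leq\deg(a)+\deg(b'_{\leftarrow})$, using $\deg(b'_{\leftarrow})\geq 1$). Assuming by contradiction $\tilde\Delta_{\leftarrow}(b)\neq 0$, fix a minimal writing and let $\mathfrak{B}$ collect the summands of maximal degree $M$ in $b'_{\leftarrow}$; projecting the vanishing relation onto first-tensor degree $\deg(a)+M$ isolates the third summand and produces $\sum_{\mathfrak{B}}(a*b'_{\leftarrow})\otimes b''_{\leftarrow}=0$. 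Minimality of the writing forces $(b'_{\leftarrow})_{\mathfrak{B}}$ and $(b''_{\leftarrow})_{\mathfrak{B}}$ to be linearly independent, and an application of theorem \ref{thm:inthom} to the single-term tensor $a\otimes\bigl(\sum\lambda_i b'_{\leftarrow,i}\bigr)$ with $\ltimes=*$ shows that $(a*b'_{\leftarrow})_{\mathfrak{B}}$ is independent as well. Completing both families into bases and pairing with suitable dual vectors then reaches a contradiction exactly as in the preceding proof. Hence $b\in\Prim_{\leftarrow}(\mathcal{A})$; the relation collapses to $a'\otimes a''\succ b=0$, and a parallel argument (minimality of $\tilde\Delta(a)$ together with theorem \ref{thm:inthom} applied to $a''\otimes b$ with $\ltimes=\succ$) forces $\tilde\Delta(a)=0$, i.e.\ $a\in\Prim_{Coass}(\mathcal{A})$.

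The second equivalence follows the same template but requires one extra case split. In
\[
a'*b'_{\rightarrow}\otimes a''\prec b''_{\rightarrow} + a'\otimes a''\prec b + b'_{\rightarrow}\otimes a\prec b''_{\rightarrow} = 0
\]
no summand dominates in first-tensor degree unconditionally. However, if both $\tilde\Delta(a)\neq 0$ and $\tilde\Delta_{\rightarrow}(b)\neq 0$, picking minimal writings and letting $\mathfrak{A},\mathfrak{B}$ denote the first-component-maximal parts shows the first summand carries the strictly largest first-tensor degree $\deg(\mathfrak{A}_{1})+\deg(\mathfrak{B}_{1})$, whence projection yields
\[
\sum_{\mathfrak{A},\mathfrak{B}}(a'*b'_{\rightarrow})\otimes(a''\prec b''_{\rightarrow})=0;
\]
the same freeness-via-\ref{thm:inthom} argument (with $\ltimes=*$ on the left factor and $\ltimes=\prec$ on the right) produces a contradiction. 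Therefore $\tilde\Delta(a)=0$ or $\tilde\Delta_{\rightarrow}(b)=0$. In the former case the relation reduces to $b'_{\rightarrow}\otimes(a\prec b''_{\rightarrow})=0$, from which $\tilde\Delta_{\rightarrow}(b)=0$ follows by the earlier single-summand argument; in the latter it reduces to $a'\otimes(a''\prec b)=0$, forcing $\tilde\Delta(a)=0$. Both conclusions thus hold in either branch. The principal obstacle is precisely this two-way case split for \eqref{troisdeuxrel4}: unlike \eqref{troisdeuxrel6}, no single summand is \emph{a priori} dominant, so one must rule out the simultaneous nonvanishing of $\tilde\Delta(a)$ and $\tilde\Delta_{\rightarrow}(b)$ before running the simplified arguments.
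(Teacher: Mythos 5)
Your proof is correct and follows essentially the same route the paper intends: the paper gives no separate argument for this proposition beyond ``using similar arguments,'' deferring to the preceding proof's machinery of minimal writings, isolation of the summand of maximal first-tensor degree, freeness via Theorem \ref{thm:inthom}, and pairing with dual basis elements, which is exactly what you carry out (with the same implicit restriction to nonzero homogeneous $a,b$ that the paper also makes). Your explicit observation that relation \eqref{troisdeuxrel4} requires first ruling out the simultaneous non-vanishing of $\tilde{\Delta}(a)$ and $\tilde{\Delta}_{\rightarrow}(b)$ before the single-summand reductions is a faithful and slightly more carefully organized version of the paper's two-case split.
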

 The proposition above gives two maps
\begin{align*}
	&\theta_{\succ}\colon \ \begin{cases}
		\Prim_{\rm Coass}(\mathcal{A})\otimes \Prim_{\leftarrow}(\mathcal{A}) \rightarrow \Prim_{\leftarrow}(\mathcal{A}),\\
		a\otimes b \mapsto a\succ b,
	\end{cases} \\
&\theta_{\prec}\colon \ \begin{cases}
	\Prim_{\rm Coass}(\mathcal{A})\otimes \Prim_{\rightarrow}(\mathcal{A}) \rightarrow \Prim_{\rightarrow}(\mathcal{A}), \\
	a\otimes b \mapsto a\prec b.
\end{cases}
\end{align*}

Moreover, the space $\Prim_{\rm Codend}(\mathcal{A})$ is an algebra with $\cdot$.

\begin{defi}
	A dipterous algebra $(H,*)$ is an associative algebra with an additional bilinear operation denoted by $\prec$ such that, for all $a,b,c\in H, (a\prec b)\prec c= a\prec (b*c)$.
\end{defi} We notice that $\mathcal{A}$ is also a \emph{dipterous} algebra, as a consequence $\mathcal{A}$ is cofree referring to~\cite{diptere}. In other words, there exists a vector space $V$ such that $\coT(V)\simeq \mathcal{A}$ as coalgebras. For more details see~\cite{diptere}.

\paragraph{Series of coassociative primitives:}

using the results of the previous paragraph and the results $59$ (i.e., $\Prim(\coT(V))=V$) and $46$ (i.e, $F_{T(V)}(X)=\frac{1}{1-F_V(X)}$ where $F_B$ is the formal series of the graded space $B$ such that $B_0=(0)$) from~\cite{algHopf}, we obtain
\begin{align*}
	F_{T(V)}(X)=F_\mathcal{A}(X)=\frac{1}{1-F_{\Prim_{\rm Coass}(\mathcal{A})}(X)},
\end{align*}
which implies $F_{\Prim(\mathcal{A})}(X)=\frac{F_\mathcal{A}(X)-1}{F_\mathcal{A}}.$
But, $F_\mathcal{A}(X)=1+R(X).$ As a consequence, we find
\[
F_{\Prim_{\rm Coass}(\mathcal{A})}(X)=\frac{R(X)}{1+R(X)}.
\]

\begin{Prop}\label{nbcoass}
	We have
	\[
	F_{\Prim_{\rm Coass}(\mathcal{A})}(X)=\frac{P(X)}{X}-1.
	\]
	In other words,
	\[
	F_{\Prim_{\rm Coass}(\mathcal{A})}(X)=\sum_{n=1}^{+\infty} \mathcal{A}_{n+1}X^n,
	\]
	where $\mathcal{A}_0=0$, $\mathcal{A}_1=1$, $\mathcal{A}_2=1$ and for all $n\geq 3$, $\mathcal{A}_n=2a_{n-2}.$
\end{Prop}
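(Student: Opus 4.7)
The proof is essentially a bookkeeping of generating series identities that are all already on the table. The plan is to combine three facts: (i) the series $F_{\Prim_{Coass}(\mathcal{A})}(X) = \dfrac{R(X)}{1+R(X)}$ derived in the preceding paragraph from the cofree dipterous structure (via results 46 and 59 of \cite{algHopf}), (ii) the identity $\dfrac{R(X)}{1+R(X)} = X + 2XR(X)$ which was established as equation \eqref{cal:PrimCoass} inside the proof of Proposition \ref{nbcodend}, and (iii) the closed form $P(X) = X + X^2 + 2X^2 R(X)$ coming from the identity $P(X) = \dfrac{R(X)}{(1+R(X))^2}$ via Theorem \ref{thmcodend} and Remark \ref{Rq:series}.

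First I would write
\[
\frac{P(X)}{X} - 1 \;=\; \frac{X + X^2 + 2X^2R(X)}{X} - 1 \;=\; X + 2XR(X),
\]
using (iii). Then by (ii) this equals $\dfrac{R(X)}{1+R(X)}$, which by (i) is precisely $F_{\Prim_{Coass}(\mathcal{A})}(X)$. This gives the first formula of the proposition.

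For the second formula, I would invoke Proposition \ref{nbcodend}, which tells us that $P(X) = \sum_{n \geq 0} A_n X^n$ with $A_0=0$, $A_1=A_2=1$, and $A_n = 2a_{n-2}$ for $n\geq 3$. Dividing by $X$ and subtracting $1$ (to cancel the $A_1 X^0 = 1$ term) yields
\[
\frac{P(X)}{X} - 1 \;=\; \sum_{n \geq 1} A_{n+1}\, X^n,
\]
which matches the claimed expansion (with the paper's notation $\mathcal{A}_n := A_n$ in the statement of this proposition).

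There is essentially no obstacle here: the only thing to be careful about is the indexing shift between $P(X)$ and $F_{\Prim_{Coass}(\mathcal{A})}(X)$ (because $\Prim_{Coass}(\mathcal{A})$ is graded with $X^1$ corresponding to $\Y$ while $P$ starts at $X^1$ from the term $A_1 X = X$), and the mild clash of notation $\mathcal{A}_n$ in the statement with the grading $\mathcal{A}_n$ of $\mathcal{A}$, which should be disambiguated by the context.
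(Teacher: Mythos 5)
Your proposal is correct and follows essentially the same route as the paper: the paper's proof likewise combines the identity $F_{\Prim_{Coass}(\mathcal{A})}(X)=\frac{R(X)}{1+R(X)}=X+2XR(X)$ from \eqref{cal:PrimCoass} with Remark \ref{Rq:series} to show that $\frac{P(X)}{X}-1$ equals the same expression $X+2XR(X)$. The coefficient identification via Proposition \ref{nbcodend} and your remarks on the index shift and notation are consistent with the paper's intent.
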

\begin{proof}
		From~\eqref{cal:PrimCoass}, we get
\begin{align*}
	F_{\Prim_{\rm Coass}(\mathcal{A})}(X)=\frac{R(X)}{1+R(X)}
						 =X+2XR(X).
\end{align*}
And we also have, according to Remark~\ref{Rq:series},
\begin{align*}
	\frac{P(X)}{X}-1&=\frac{X+X^2+2X^2R(X)}{X}-1=X+2XR(X). \tag*{\qed}
\end{align*}
\renewcommand{\qed}{}
\end{proof}

Finally, we obtain
\begin{thm}\label{thm:coassincodend}
	For all $n\in\N$, we define
	\begin{equation*}
		\theta_n\colon \ \begin{cases}
			\Prim_{\rm Coass}(\mathcal{A})_n\otimes \big\langle \Y \big\rangle \rightarrow \Prim_{\rm Codend}(\mathcal{A})_{n+1}, \\
			a\otimes \Y \mapsto a\cdot \Y.
		\end{cases}
	\end{equation*}
Then for all $n\in\N$, $\theta_n$ is an isomorphism of vector spaces.
\end{thm}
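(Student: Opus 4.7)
The plan is to prove that each $\theta_n$ is an isomorphism by showing three things in sequence: well-definedness, injectivity, and equality of source and target dimensions, from which surjectivity follows for free.

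For well-definedness, I would invoke the proposition proved just above, which characterises when $a \cdot b$ belongs to $\Prim_{Codend}(\mathcal{A})$. Since $\tilde{\Delta}\left(\Y\right) = 0$, we have $\Y \in \Prim_{Codend}(\mathcal{A})$, and by hypothesis $a \in \Prim_{Coass}(\mathcal{A})$; the proposition then gives $a \cdot \Y \in \Prim_{Codend}(\mathcal{A})$. Since $\cdot$ is homogeneous, $\theta_n$ indeed takes values in $\Prim_{Codend}(\mathcal{A})_{n+1}$.

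For injectivity, I would take $a \in \Prim_{Coass}(\mathcal{A})_n$ with $\theta_n\left(a \otimes \Y\right) = a \cdot \Y = 0$ and set $X := a \otimes \Y \in \mathcal{A}_n \otimes \mathcal{A}_1$. Since $\mathcal{A}_1 = \left\langle \Y \right\rangle$ is one-dimensional, the expression $a \otimes \Y$ is already a minimal writing of $X$ (it has at most one nonzero summand). Applying theorem \ref{thm:inthom} with $\ltimes = \cdot$ forces $X = 0$, whence $a = 0$.

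For the dimension comparison, I would combine propositions \ref{nbcodend} and \ref{nbcoass}: the identity $F_{\Prim_{Coass}(\mathcal{A})}(X) = \frac{P(X)}{X} - 1$ rearranges to
\[
P(X) \;=\; X + X \cdot F_{\Prim_{Coass}(\mathcal{A})}(X).
\]
Extracting the coefficient of $X^{n+1}$ for $n \geq 1$ yields $\dim \Prim_{Codend}(\mathcal{A})_{n+1} = \dim \Prim_{Coass}(\mathcal{A})_n$. Since $\theta_n$ is an injective linear map between finite-dimensional spaces of the same dimension, it is automatically an isomorphism, completing the proof.

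There is no genuine obstacle beyond assembling the three already-proven ingredients: the primitivity criterion for elements of the form $a \cdot b$, the rigidity statement of theorem \ref{thm:inthom}, and the explicit series identity relating $\Prim_{Codend}$ and $\Prim_{Coass}$. The elegance of the argument lies in the fact that the seemingly nontrivial surjectivity is bypassed entirely by the dimension count, so the only analytic content is the one-line application of theorem \ref{thm:inthom} to guarantee injectivity.
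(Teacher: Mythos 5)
Your proposal is correct and follows essentially the same route as the paper: injectivity of $\theta_n$ via Theorem \ref{thm:inthom} (the paper phrases this as freeness of $\left(a_i\cdot\Y\right)_i$ for a basis $(a_i)$ of $\Prim_{Coass}(\mathcal{A})_n$, which is the same argument), combined with the dimension count coming from Propositions \ref{nbcodend} and \ref{nbcoass}. The only difference is cosmetic — you make the well-definedness step explicit and extract the dimension equality from the series identity rather than quoting $\dim=A_{n+1}$ on both sides — and, like the paper, the argument really covers $n\geq 1$ (for $n=0$ the source is zero while $\Prim_{Codend}(\mathcal{A})_1=\left\langle\Y\right\rangle$), an edge case the paper's own proof also glosses over.
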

\begin{proof}
	Let $n\in\N.$
	By Proposition $\ref{nbcoass}$, we know that $\dim\left(\Prim_{\rm Coass}(\mathcal{A})_{n}\right)=A_{n+1}$. From Proposition $\ref{nbcodend}$, we obtain $\dim\left(\Prim_{\rm Codend}(\mathcal{A})_{n+1}\right)=A_{n+1}$.
	But
	\[
	\dim\big(\Prim_{\rm Coass}(\mathcal{A})_{n}\otimes \big\langle \Y \big\rangle\big)=\dim(\Prim_{\rm Codend}(\mathcal{A})_{n+1}).
	\]
	Consider $(a_i)_{i\in \IEM{1}{A_{n+1}}}$ a basis of $\Prim_{\rm Coass}(\mathcal{A})_n$.
	According to Theorem $\ref{thm:inthom}$, the following family $\big(a_i\cdot \Y\big)_{i\in\IEM{1}{A_{n+1}}}$ is free in $\Prim_{\rm Codend}(\mathcal{A})_{n+1}$.
	
	For reasons of dimensions, we conclude that $\theta_n$ is an isomorphism.
\end{proof}

\begin{Rq}\label{Rq:Primcoassgen}
	We fix $(\mathcal{A},\prec,\succeq,\Delta_{\rightarrow},\Delta_{\leftarrow})$ a $(2,2)$-dendriform bialgebra structure.
	 For all vector spaces $V$ over $\K$, we denote by $\Dend(V)$ the free dendriform algebra generated by $V$.
	Referring to~\cite[Theorem~4.6]{euleridem}, the brace algebras $\mathfrak{B}(V)$ and $\Prim_{\rm Coass}(\Dend(V))$ are isomorphic for any vector space $V$. For more information about brace algebra, we refer to~\cite{Chapbrace,Brace}. Thanks to~\cite[Theorem~35 and Corollary~36]{bidend}, we have $\mathcal{A}=\Tridend\big(\Y\big)=\Dend(\Prim_{\rm Codend}(\mathcal{A}))$. As a~consequence, we can compute recursively $\Prim_{\rm Coass}(\mathcal{A})$ and $\Prim_{\rm Codend}(\mathcal{A})$ the following way:
	\begin{enumerate}\itemsep=0pt
		\item Suppose we know $\Prim_{\rm Coass}(\mathcal{A})_n.$
		\item Compute $\Prim_{\rm Codend}(\mathcal{A})_{n+1}$ with Theorem~\ref{thm:coassincodend}.
		\item Use $\mathfrak{B}(\Prim_{\rm Codend}(\mathcal{A}))$ in order to compute $\Prim_{\rm Coass}(\mathcal{A})_{n+1}$ from $\Prim_{\rm Coass}(\mathcal{A})_k$, $k\leq n$ and $\Prim_{\rm Codend}(\mathcal{A})_{n+1}$.
	\end{enumerate}
\end{Rq}

\subsection[Quotient of A by langle A\^{}+ cdot A\^{}+ rangle]{Quotient of $\boldsymbol{\mathcal{A}}$ by $\boldsymbol{\big\langle \mathcal{A}^+\cdot \mathcal{A}^+ \big\rangle}$}

Consider $\mathcal{A}$ the free tridendriform algebra with one generator. We remind that we write $\mathcal{A}=1_\mathcal{A}\K\oplus \mathcal{A}^+$.
We define $\big\langle \mathcal{A}^+\cdot \mathcal{A}^+ \big\rangle$ as the tridendriform ideal generated by $\mathcal{A}^+\cdot \mathcal{A}^+.$
\begin{Lemme}
	 The tridendriform ideal $\big\langle \mathcal{A}^+\cdot \mathcal{A}^+ \big\rangle$ is a $(3,2)$-dendriform biideal of $(\mathcal{A},\prec,\succ,\allowbreak \Delta_{\leftarrow},\Delta_{\rightarrow})$.
\end{Lemme}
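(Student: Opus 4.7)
The plan is to verify the two pieces of the biideal axiom: $I$ is already a tridendriform ideal by construction, so what must be checked is that $I$ is a dendriform coideal, i.e.\ $\varepsilon(I)=0$ and $\Delta_{\leftarrow}(I),\Delta_{\rightarrow}(I)\subseteq I\otimes\mathcal{A}+\mathcal{A}\otimes I$. The counit condition is immediate: $\mathcal{A}^+$ is itself a tridendriform ideal and contains $\mathcal{A}^+\cdot\mathcal{A}^+$, so $I\subseteq\mathcal{A}^+=\ker\varepsilon$.

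For the coproduct conditions I would introduce the subspace
$$S:=\{x\in\mathcal{A}\,|\,\Delta_{\leftarrow}(x),\Delta_{\rightarrow}(x)\in I\otimes\mathcal{A}+\mathcal{A}\otimes I\}$$
and prove that $S$ is itself a tridendriform ideal of $\mathcal{A}$ containing $\mathcal{A}^+\cdot\mathcal{A}^+$. Minimality of $I$ then forces $I\subseteq S$, which is exactly the required inclusion. The workhorse is the augmented Hopf compatibility $\Delta_{\ltimes}(a\star b)=\Delta(a)\star\Delta_{\ltimes}(b)$ for $\star\in\{\prec,\cdot,\succ\}$ and $\ltimes\in\{\leftarrow,\rightarrow\}$, which was already established in the proof of Proposition~\ref{Prop:troideuxEg}.

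For $\mathcal{A}^+\cdot\mathcal{A}^+\subseteq S$, I would pick $a,b\in\mathcal{A}^+$ and expand $\Delta_{\ltimes}(a\cdot b)=\Delta(a)\cdot\Delta_{\ltimes}(b)=\sum a_{(1)}*b_{(1)}\otimes a_{(2)}\cdot b_{(2)}$. Since $1\cdot z=z\cdot 1=0$ in $\mathcal{A}$, only summands with both $a_{(2)},b_{(2)}\in\mathcal{A}^+$ survive, and for those the right-hand factor $a_{(2)}\cdot b_{(2)}$ lies in $\mathcal{A}^+\cdot\mathcal{A}^+\subseteq I$; hence $\Delta_{\ltimes}(a\cdot b)\in\mathcal{A}\otimes I$. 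For tridendriform-ideal stability of $S$, take $x\in S$, $c\in\mathcal{A}$ and $\star\in\{\prec,\cdot,\succ\}$. From $\Delta_{\ltimes}(c\star x)=\Delta(c)\star\Delta_{\ltimes}(x)$, every summand has the shape $c_{(1)}*u\otimes c_{(2)}\star v$ with $u\otimes v$ a summand of $\Delta_{\ltimes}(x)\in I\otimes\mathcal{A}+\mathcal{A}\otimes I$: if $u\in I$, then $c_{(1)}*u\in I$ because every tridendriform ideal is a two-sided $*$-ideal (as $*=\prec+\cdot+\succ$), giving a term in $I\otimes\mathcal{A}$; if $v\in I$, then $c_{(2)}\star v\in I$ by the tridendriform-ideal property, giving a term in $\mathcal{A}\otimes I$. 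The symmetric case $x\star c$ is handled identically, starting from $\Delta_{\ltimes}(x\star c)=\Delta(x)\star\Delta_{\ltimes}(c)$ and using $\Delta(x)=\Delta_{\leftarrow}(x)+\Delta_{\rightarrow}(x)\in I\otimes\mathcal{A}+\mathcal{A}\otimes I$, valid because $x\in S$.

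I do not anticipate a real obstacle in this argument: the only delicate point is the bookkeeping of unit contributions inside the augmented Sweedler notation on the generating side, so as to make sure that every surviving right-hand tensor factor of $\Delta_{\ltimes}(a\cdot b)$ genuinely lies in $\mathcal{A}^+\cdot\mathcal{A}^+$ and therefore in $I$.
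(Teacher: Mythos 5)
Your proposal is correct and follows essentially the same route as the paper: both verify the coideal condition on the generators $a\cdot b$ via the compatibility $\Delta_{\ltimes}(a\star b)=\Delta(a)\star\Delta_{\ltimes}(b)$ and then propagate it through the ideal using the fact that $I$ absorbs the three products, your packaging of the propagation step as ``$S$ is a tridendriform ideal containing $\mathcal{A}^+\cdot\mathcal{A}^+$, hence contains $I$ by minimality'' being a cleaner formalization of the implicit induction over the construction of $I$ that the paper performs. The only loose end is the one you flag yourself: on the $D$-component of $\mathcal{A}\overline{\otimes}\mathcal{A}$ the operations act as $u\ltimes u'\otimes v*v'$ rather than $u*u'\otimes v\ltimes v'$, so for instance $\Delta_{\leftarrow}(a\cdot b)$ also contains the term $(a\cdot b)\otimes 1$, which is not zero but lands in $I\otimes\mathcal{A}$ and is therefore harmless.
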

\begin{proof}
	By definition, it is a tridendriform ideal.
		
We check $I\coloneqq\big\langle \mathcal{A}^+\cdot \mathcal{A}^+ \big\rangle$ is a dendriform coideal.
		First, as $\varepsilon(|)=1$ and is zero on $\mathcal{A}^+$, it is easily seen that $\varepsilon(I)=(0)$. Then for all $s_1,s_2\in \mathcal{A}^+$,
		\begin{gather*}
			\Delta_{\leftarrow}(s_1\cdot s_2)
			=s_1's'_{2 \leftarrow}\otimes s_1''\cdot s''_{2\leftarrow}+s'_{2 \leftarrow}\otimes s_1\cdot s''_{2 \leftarrow} \in \mathcal{A}\otimes I, \\
			\Delta_{\rightarrow}(s_1\cdot s_2)
=s_1's'_{2 \rightarrow}\otimes s_1''\cdot s''_{2 \rightarrow}+s_1'\otimes s_1''\cdot s_2+s'_{2 \rightarrow}\otimes s_1\cdot s''_{2 \rightarrow}\in \mathcal{A}\otimes I.
		\end{gather*}
		As a consequence, $\Delta(s)\in \mathcal{A}\otimes I + I\otimes \mathcal{A}$ for $s=s_1\cdot s_2$. Then let $x$ be an element of $I$ such that $\Delta(x)\in \mathcal{A}\otimes I+I\otimes \mathcal{A}$. Let $y\in \mathcal{A}$ and $(\ltimes,\rtimes)\in\{\prec,\cdot,\succ,*\}^2$. Then denoting $\Delta(x)=|\otimes x+x\otimes |+\sum x'\otimes x''$ and $\Delta(y)=|\otimes y+y\otimes |+\sum y'\otimes y''$, we have
		\begin{gather*}
			\Delta(x)\ltimes \Delta(y) =|\otimes x\ltimes y+y\otimes x\ltimes |+x\otimes |\ltimes y+x\ltimes y\otimes |\\
			 \hphantom{\Delta(x)\ltimes \Delta(y) =}{}+\sum x'\otimes x''\ltimes y+\sum x'*y\otimes x''\ltimes |+\sum x'*y'\otimes x''\ltimes y'', \\
			\Delta(y)\rtimes\Delta(x) =|\otimes y\rtimes x+x\otimes y\rtimes |+y\otimes |\rtimes x+y\rtimes x\otimes |\\
			 \hphantom{\Delta(y)\rtimes\Delta(x) =}{}+\sum y'\otimes y''\rtimes x+\sum y'*x\otimes y''\rtimes |+\sum y'*x'\otimes y''\rtimes x''.
		\end{gather*}
		In both expressions, each term is either $0$ or an element of $\mathcal{A}\otimes I+I\otimes \mathcal{A}$ using the fact that $I$ is a~tridendriform ideal. We have proved that $\Delta(x\ltimes y)$ and $\Delta(y\rtimes x)$ belong to $\mathcal{A}\otimes I+I\otimes \mathcal{A}$. Using similar ideas, we easily prove the same results for $\Delta_{\leftarrow}(x\ltimes y)=\Delta(x)\ltimes \Delta_{\leftarrow}(y)$, $\Delta_{\leftarrow}(y\rtimes x)=\Delta(y)\rtimes \Delta_{\leftarrow}(x)$, $\Delta_{\rightarrow}(x\ltimes y)=\Delta(x)\ltimes \Delta_{\rightarrow}(y)$ and $\Delta_{\rightarrow}(y\rtimes x)=\Delta(y)\rtimes \Delta_{\rightarrow}(x)$. So it proves that $\big\langle \mathcal{A}^+ \cdot \mathcal{A}^+\big\rangle$ is a dendriform coideal.

So the proof is complete.
\end{proof}

As a consequence, we can divide $\mathcal{A}$ by the $(3,2)$-dendriform biideal $\big\langle \mathcal{A}^+\cdot \mathcal{A}^+ \big\rangle$.
Referring to the Remark~\ref{algo}, we will give a characterization to describe the trees in $\big\langle \mathcal{A}^+\cdot \mathcal{A}^+ \big\rangle$.

\begin{Lemme}
	Let $t$ be a tree of $\mathcal{A}$. Then $t$ is not a binary tree \ssi{} $t\in \big\langle \mathcal{A}^+\cdot \mathcal{A}^+ \big\rangle$.
\end{Lemme}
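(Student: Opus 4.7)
The direction ``not binary $\Rightarrow$ in the ideal'' is essentially the case analysis already carried out before the lemma statement; the direction ``in the ideal $\Rightarrow$ not binary'' (equivalently: every binary tree is nonzero in the quotient) is the substantive part, and I would handle it by exhibiting a tridendriform morphism that kills the ideal but is the identity on binary trees.

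\textbf{Forward direction.} I would formalize the case discussion preceding the lemma as a strong induction on $\nf(t)$. Write $t=t^{(0)}\vee\cdots\vee t^{(k)}$ and suppose $t$ is not binary. If $k=0$ then $t=|$, which is binary, so this case is vacuous. If $k\geq 2$: in Case 4 of Remark~\ref{algo} (some interior $t^{(i)}\neq |$) we have $t=u\cdot v$ directly, so $t\in\langle\mathcal{A}^+\cdot\mathcal{A}^+\rangle$; in Case 1 (all $t^{(i)}=|$ and $k\geq 2$) we have $t=\Y\cdot c_{k-1}\in\langle\mathcal{A}^+\cdot\mathcal{A}^+\rangle$; in Cases 2 and 3 with $k\geq 2$, the decomposition expresses $t$ as $x\succ y$ or $y\prec x$ where $y$ falls back into Case~1 or Case~4 with fewer leaves, hence $y\in\langle\mathcal{A}^+\cdot\mathcal{A}^+\rangle$ by induction, and then $t\in\langle\mathcal{A}^+\cdot\mathcal{A}^+\rangle$ because this is a tridendriform ideal. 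Finally if $k=1$, $t=t^{(0)}\vee t^{(1)}$, and $t$ non-binary means that (at least) one of $t^{(0)},t^{(1)}$ is non-binary; by the decomposition algorithm $t=t^{(0)}\succ(|\vee t^{(1)})$ or $t=(t^{(0)}\vee |)\prec t^{(1)}$, and the non-binary factor is in the ideal by induction.

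\textbf{Converse direction.} I would use the Loday--Ronco dendriform algebra $A_{LR}=\bigoplus_n\mathbb{K}\PBT_n$ on planar binary trees (with its well-known inductive dendriform products $\prec,\succ$ from \cite{LR}), and view it as a tridendriform algebra by setting the middle product $\cdot$ identically zero. The relations \eqref{tri4}--\eqref{tri7} are then trivial and \eqref{tri1}--\eqref{tri3} reduce to the Loday--Ronco dendriform axioms, so this is indeed a tridendriform algebra. By the universal property of $\mathcal{A}^+=\Tridend(\mathbb{K})$, there is a unique tridendriform morphism $\pi:\mathcal{A}^+\to A_{LR}$ sending $\Y$ to $\Y$. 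Since $\cdot$ vanishes on $A_{LR}$, we have $\pi(\mathcal{A}^+\cdot\mathcal{A}^+)=0$, hence $\langle\mathcal{A}^+\cdot\mathcal{A}^+\rangle\subseteq\ker\pi$. A short induction on the number of leaves, using the inductive defining formulas of $\prec,\succ$ on both $\mathcal{A}$ and $A_{LR}$ (which coincide when $\cdot$ is ignored), shows that $\pi$ restricts to the identity on $\bigoplus_n\mathbb{K}\PBT_n\subseteq\mathcal{A}$. Consequently any binary $t$ satisfies $\pi(t)=t\neq 0$, so $t\notin\langle\mathcal{A}^+\cdot\mathcal{A}^+\rangle$, which is exactly the contrapositive of the remaining direction.

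\textbf{Main obstacle.} The real point is the converse: one must know that binary trees in $\mathcal{A}$ are linearly independent modulo the ideal. The forward case-by-case reduction already mentioned in the text only shows the surjection $\faktor{\mathcal{A}}{\langle\mathcal{A}^+\cdot\mathcal{A}^+\rangle}\twoheadrightarrow \bigoplus_n\mathbb{K}\PBT_n$ suggested by the case analysis; what I need for injectivity is a backward map, which the universal property of $\mathcal{A}^+$ provides via $\pi$. Verifying that $\pi$ really is the identity on binary trees is a routine but genuine induction that hinges on the fact that the inductive recipe $x\prec y=x^{(0)}\vee\cdots\vee(x^{(k)}*y)$ and $x\succ y=(x*y^{(0)})\vee\cdots\vee y^{(l)}$ preserve the class of binary trees when $\cdot$ is discarded.
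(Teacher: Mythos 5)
Your forward direction (non-binary $\Rightarrow$ in the ideal) is the same induction on the number of leaves and the same case analysis on $t=t^{(0)}\vee\cdots\vee t^{(k)}$ that the paper uses, so there is nothing to compare there. The converse is where you genuinely diverge. The paper argues combinatorially: writing $t_1=t_1^{(0)}\vee\cdots\vee t_1^{(k_1)}$ and $t_2=t_2^{(0)}\vee\cdots\vee t_2^{(k_2)}$ with $k_1,k_2\geq 1$, every tree occurring in $t_1\cdot t_2=t_1^{(0)}\vee\cdots\vee\bigl(t_1^{(k_1)}*t_2^{(0)}\bigr)\vee\cdots\vee t_2^{(k_2)}$ has a root with $k_1+k_2+1\geq 3$ children, hence is not binary. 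You instead equip the Loday--Ronco algebra $\bigoplus_n\K\PBT_n$ with the zero middle product, invoke the universal property of $\Tridend(\K)$ to produce a tridendriform morphism $\pi$ whose kernel contains the ideal, and check that $\pi$ fixes binary trees. Both are sound, but they do not establish quite the same statement: the paper's computation only shows that the generating set $\mathcal{A}^+\cdot\mathcal{A}^+$ is supported on non-binary trees, and to conclude that a binary tree cannot belong to the full ideal $\left\langle\mathcal{A}^+\cdot\mathcal{A}^+\right\rangle$ one still needs the span of non-binary trees to be stable under left and right tridendriform multiplication by arbitrary elements --- a point the paper leaves implicit. Your retraction closes that gap for free, since $\ker\pi$ is automatically a tridendriform ideal containing the generators, and it yields the stronger fact that the binary trees are linearly independent modulo the ideal, which is exactly what is needed just after the lemma to identify the quotient with $\bigoplus_n\K\PBT_n$ as a vector space. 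The cost is the routine verification that relations \eqref{tri1}--\eqref{tri7} hold when $\cdot=0$ and the induction showing $\pi(t)=t$ for binary $t$ via the decompositions $t=t^{(0)}\succ(|\vee t^{(1)})$ and $t=(t^{(0)}\vee|)\prec t^{(1)}$; both of these go through as you describe.
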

\begin{proof}
	
	Indeed, suppose that $t=t_1\cdot t_2$ where $t_1=t_1^{(0)}\vee \dots\vee t_1^{(k_1)}$ and $t_2=t_2^{(0)}\vee \dots\vee t_2^{(k_2)}$ with $k_1\geq 1$ and $k_2\geq 1$ because $t_1,t_2\in \mathcal{A}^+$.
	As a consequence,
	\begin{align*}
		&t=\big(t_1^{(0)}\vee \dots\vee t_1^{(k_1)}\big)\cdot \big(t_2^{(0)}\vee \dots\vee t_2^{(k_2)}\big)
		 =t_1^{(0)}\vee \dots\vee \big(t_1^{(k_1)}*t_2^{(0)}\big)\vee \dots\vee t_2^{(k_2)}.
	\end{align*}
	So, this is clear that $t$ is not a binary tree.
	
	Conversely, consider a non-binary tree $t$. We want to show that $t$ has at least one writing with $\cdot$. To prove this, we proceed by induction on the number of leaves of $t$. We denote $n$ the number of leaves of $t$.
	To initialize the induction, we need to begin with $n=3$. The only tree which is not binary is $\balais=\Y\cdot \Y$. 	
	To prove the heredity, we consider that $n>3$ and we suppose that for all $3\leq m<n$, non-binary trees with $m$ leaves are elements of $\big\langle \mathcal{A}^+\cdot \mathcal{A}^+ \big\rangle$.
	Let~$t$ be a tree with $n$ leaves. We have the following writing:
	\[
	t=t^{(0)}\vee \dots\vee t^{(k)}.
	\]
	
\textit{Case 1}: $k\geq 2$.
		We choose $i\in\IEM{1}{k-1}$.
	Then writing $t_1=t^{(0)}\vee\dots\vee t^{(i)}\vee |$ and $t_2=t^{(i+1)}\vee \dots\vee t^{(k)}$, we get
	\begin{gather*}
	t=t_1\cdot t_2.
	\end{gather*}
	
\textit{Case 2}: $k=1$. As a consequence, $t=t^{(0)}\vee t^{(1)}$ where each of these trees owns at least one leaf. But, as~$t$ is not binary either $t^{(0)}$ is not binary, either $t^{(1)}$ is not binary. In the two cases, each of these trees has strictly less leaves than $t$. By the induction hypothesis, we can suppose that $t^{(0)}\in \big\langle \mathcal{A}^+\cdot \mathcal{A}^+ \big\rangle$ without loss of generality.
	But, $\big\langle \mathcal{A}^+\cdot \mathcal{A}^+ \big\rangle$ is a tridendriform ideal. This implies that $t\in \big\langle \mathcal{A}^+\cdot \mathcal{A}^+ \big\rangle$ because $t=\left(t^{(0)}\succ \Y\right)\prec t^{(1)}$.
\end{proof}

This allows us to conclude that \smash{$\faktor{\mathcal{A}}{\langle \mathcal{A}^+\cdot \mathcal{A}^+ \rangle}$} and $ \bigoplus_{n\geq 0} \K\PBT_n$ are isomorphic as vector~spaces.
Finally, the quotient space of $\mathcal{A}$ by $\big\langle \mathcal{A}^+\cdot \mathcal{A}^+ \big\rangle$ is a $(3,2)$-dendriform bialgebra with its natural structure of quotient space. But, the product $\cdot$ is $0$ on this space. In particular, the bialgebra we just obtained can be seen as a $(2,2)$-dendriform bialgebra. More precisely, in \smash{$\faktor{\mathcal{A}}{\langle \mathcal{A}^+\cdot \mathcal{A}^+ \rangle}$}
\[
	*=\prec+\succ.
\]
This product corresponds to the one written in~\cite[Proposition 3.2]{LR}. Moreover, it is not hard to see that the coproduct of the quotient space is the same as the one in~\cite[Proposition~3.3]{Eclair}. This algebra is the \emph{Loday--Ronco algebra}.
Referring to~\cite{LR}, the Proposition $3.2$ says that for $t$,~$t'$ two binary trees written $t=t_1\vee t_2$ and $t'=t_1'\vee t_2'$, we have
\[
t*t'=t_1\vee (t_1'*t_2)+(t*t_1')\vee t_2'.
\]
This is exactly the descriptions of the products $\prec$ and $\succ$ restricted to binary trees.
\begin{Prop}
The coproduct given in~{\rm \cite{LR}} on a tree $t=t_1\vee t_2$ with $t_1\in T_n$ and $t_2\in T_m$ is defined by
	\begin{equation*}
		\Delta(t)=\sum_{j,k}\big( t_1^{(1)(j)}* t_2^{(1)(k)} \big)\otimes \big( t_1^{(2)(n-j)}\vee t_2^{(2)(m-k)} \big)+t\otimes |,
	\end{equation*}
	where $ \Delta(t_1)=\sum_{j=0}^n t_1^{(1)(j)}\otimes t_1^{(2)(n-j)}$ and $ \Delta(t_2)=\sum_{k=0}^m t_2^{(1)(k)}\otimes t_2^{(2)(m-k)}$.
	This coproduct is the same as the one on \smash{$\faktor{\mathcal{A}}{\langle \mathcal{A}^+\cdot \mathcal{A}^+ \rangle}$}.
\end{Prop}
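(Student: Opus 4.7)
The plan is to proceed by induction on the number of leaves of the binary tree $t$, showing that the admissible-cut coproduct of $\mathcal{A}$, passed to the quotient $\faktor{\mathcal{A}}{\left\langle\mathcal{A}^+\cdot\mathcal{A}^+\right\rangle}$, matches the Loday-Ronco formula. The base cases are $t=|$, where both sides give $|\otimes |$, and $t=\Y=|\vee |$, where the Loday-Ronco sum reduces (since $n=m=0$) to the single term $(|*|)\otimes(|\vee|)=|\otimes\Y$, plus $\Y\otimes |$, matching $\Delta(\Y)=|\otimes\Y+\Y\otimes |$.

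For the inductive step with $t=t_1\vee t_2$, I would invoke Theorem~\ref{coproduit} and partition the admissible cuts of $t$ into two families: the singleton consisting of the total cut of $t$, which contributes $t\otimes |$, and all other admissible cuts. The crucial combinatorial observation is that cuts in the second family are in bijection with pairs $(c_1,c_2)$ of admissible cuts of $t_1$ and $t_2$, where each $c_i$ is allowed to be empty or total. The bijection treats the internal edge joining the root of $t$ to the root of $t_i$ (which exists precisely when $t_i\neq|$) as being attached to $c_i$, so that cutting it corresponds to the total cut of $t_i$. Under this correspondence one has
\[
G^c(t)=G^{c_1}(t_1)*G^{c_2}(t_2),\qquad P^c(t)=P^{c_1}(t_1)\vee P^{c_2}(t_2),
\]
since the falling branches from $t_1$ lie to the left of those from $t_2$ in the natural left-to-right ordering, while the two residual root components graft together onto the preserved root of $t$.

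Next, passing to the quotient collapses the associative product to $\prec+\succ$ (the middle product $\cdot$ being killed), which is precisely the product appearing in the Loday-Ronco formula. By the induction hypothesis, the quotient coproduct of $t_i$ equals the Loday-Ronco expression $\sum_j t_i^{(1)(j)}\otimes t_i^{(2)(n-j)}$ (and similarly with $m$ for $t_2$). Substituting into the sum over $(c_1,c_2)$ produces exactly the double sum $\sum_{j,k}(t_1^{(1)(j)}*t_2^{(1)(k)})\otimes(t_1^{(2)(n-j)}\vee t_2^{(2)(m-k)})$, and adjoining the $t\otimes |$ contribution from the total cut of $t$ gives the full Loday-Ronco formula.

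The main obstacle is the combinatorial bookkeeping underlying the bijection: verifying that every admissible cut of $t$ other than the total cut arises uniquely as such a pair $(c_1,c_2)$, and that the degenerate sub-cases behave correctly. In particular, the pair $(\text{empty},\text{empty})$ should recover the empty cut of $t$ since $|*|=|$ and $t_1\vee t_2=t$, giving $|\otimes t$; the pair $(\text{total},\text{total})$ should yield the genuinely distinct ``cut both connector edges'' admissible cut contributing $(t_1*t_2)\otimes \Y$, which is not the total cut of $t$; and the edge case $t_i=|$, where only one cut exists and plays the role of both empty and total, needs to be verified separately. Once this bookkeeping is settled, the identification of the two formulas is a formal substitution.
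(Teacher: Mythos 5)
Your argument is correct, but it takes a genuinely different route from the paper's. The paper also inducts on the number of leaves, but its inductive step never touches admissible cuts: it writes $t=t_1\vee t_2$ either as $t_1\succ(|\vee t_2)$ (when $t_1\neq|$) or as $(t_1\vee|)\prec t_2$ (when $t_2\neq|$), uses the fact that the quotient coproduct $\overline{\Delta}$ is a morphism for $\succ$ and $\prec$ so that, e.g., $\overline{\Delta}(t)=\overline{\Delta}(t_1)\succ\overline{\Delta}(|\vee t_2)$, and then concludes by the induction hypothesis together with a short Sweedler-type computation (including an auxiliary evaluation of $\overline{\Delta}(|\vee t_2)$ in terms of $\Delta(t_2)$). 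You instead work directly from the admissible-cut description of Theorem \ref{coproduit} and decompose the non-total cuts of $t$ as pairs of cuts of $t_1$ and $t_2$; your bijection and the identities $G^c(t)=G^{c_1}(t_1)*G^{c_2}(t_2)$ and $P^c(t)=P^{c_1}(t_1)\vee P^{c_2}(t_2)$ are correct, including the degenerate cases you flag (the pair of total cuts giving $(t_1*t_2)\otimes(|\vee|)$, which is indeed distinct from the total cut of $t$, and the collapse of empty and total cuts when $t_i=|$). The paper's route buys a purely formal inductive step by reusing the morphism property already established, at the cost of an asymmetric case split and the extra intermediate coproduct; yours is symmetric in $t_1,t_2$ and makes the cut combinatorics transparent, at the cost of having to verify the bijection --- essentially the same verification the paper performs once, earlier, when establishing the admissible-cut formula for $\Delta$ itself. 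Once the bookkeeping you flag is written out, your proof is complete.
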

\begin{proof}
We will proceed by induction over $n$ the number of leaves of $t$ to show the coproduct~$\Delta$ of Loday--Ronco algebra is the same as the one over \smash{$\faktor{\mathcal{A}}{\langle \mathcal{A}^+\cdot \mathcal{A}^+ \rangle}$} denoted by $\overline{\Delta}$.

\textit{Initialization}:
if $n=1$, i.e., $t=|$, then $\Delta(|)=\overline{\Delta}(|)=|\otimes |$. If $n=2$, i.e., $t=\Y$, we get
\begin{gather*}
	\overline{\Delta} \big(\Y\big)=\Y\otimes | +|\otimes \Y, \qquad \Delta\big(\Y\big)= |*|\otimes |\vee |+\Y\otimes |=\Y\otimes | + |\otimes \Y.
\end{gather*}

\textit{Heredity}: let $n\in\N$ such that $n\geq 3$, suppose that for all $n'<n$ we have the equality between~$\Delta$ and $\overline{\Delta}$ for all trees with $n'$ leaves. We check that it is also true for trees with $n$ leaves. Let us write $t=t_1\vee t_2$ such that $t_1$ has $n_1$ leaves and $t_2$ has $n_2$ leaves. Two mutually excluding cases can happen:
\begin{itemize}\itemsep=0pt
	\item The case $n_2+1\neq n$. Using the properties of $\overline{\Delta}$, we get
	\begin{align*}
		& \overline{\Delta}(t)=\overline{\Delta}(t_1\vee t_2)=\overline{\Delta}(t_1\succ (|\vee t_2))
		 =\overline{\Delta}(t_1)\succ \overline{\Delta}(|\vee t_2).
	\end{align*}
Applying the induction hypothesis once, we get
	\begin{align*}
		\overline{\Delta}(t)
		={}&\Delta(t_1)\succ \overline{\Delta}(|\vee t_2) \\
		={}&\Bigg( \sum_{j=0}^{n_1}t_1^{(1)(j)}\otimes t_1^{(2)(n_1-j)} \Bigg)\succ \bigg( |\vee t_2\otimes | + \!\!\sum_{c \text{ admissible cut of }t_2 }\!\!\!\!G^c(t_2)\otimes |\vee P^c(t_2)\bigg) \\
		={}&\Bigg( \sum_{j=0}^{n_1}t_1^{(1)(j)}\otimes t_1^{(2)(n_1-j)} \Bigg)\succ \Bigg(|\vee t_2\otimes |+\sum_{k=0}^{n_2}t_2^{(1)(k)}\otimes |\vee t_2^{(2)(n_2-k)}\Bigg) \\
		={}&\sum_{j,k} t_1^{(1)(j)}*t_2^{(1)(k)}\otimes t_1^{(2)(n_1-j)}\vee t_2^{(2)(n_2-k)}+t\otimes |.
	\end{align*}
So, in this case we have equality between $\overline{\Delta}$ and $\Delta$.
	\item The case $n_1+1\neq n$ is done the same way. That is why we omit some details,
	\begin{gather*}
		\overline{\Delta(t)}
		 =\overline{\Delta}((t_1\vee |)\prec t_2) \\
		 \hphantom{\overline{\Delta(t)}}{}=\sum_{j,k} t_1^{(1)(j)}* t_2^{(1)(k)}\otimes \big( \big(t_1^{(2)(n_1-j)}\vee |\big)\prec t_2^{(2)(n_2-k)}\big) \\
		 \hphantom{\overline{\Delta(t)=}}{}+\sum_k (t_1\vee |)*t_2^{(1)(k)}\otimes |\prec t_2^{(2)(n-k)} \\
		 \hphantom{\overline{\Delta(t)}}{}=\sum_{j,k} t_1^{(1)(j)}* t_2^{(2)(k)}\otimes t_1^{(2)(n_1-j)}\vee t_2^{(n_2-k)}+t\otimes |.
	\end{gather*}
\end{itemize}
Therefore, we have the result.
\end{proof}

\subsection{Conclusion}

Thanks to $\cite{Trial}$, we have given a bialgebra structure to the free tridendriform algebra $\mathcal{A}$ such that it is a $(3,2)$-dendriform bialgebra structure. Considering the graded dual of $\mathcal{A}$, we have defined $(2,3)$-dendriform bialgebras.
Using $\cite{Eclair}$, we have identified the graded dual of $\mathcal{A}$ with TSym as bialgebras. Moreover, it shows that $\TSym$ can be given a $(2,3)$-dendriform bialgebra structure. It shows the interactions between the combinatorics on trees and the dendriform and tridendriform algebras.

Some ideas are not explored in this paper. For example, can we get other quotient spaces from the free $(3,2)$-dendriform bialgebra? Can we find a description of the coassociative primitives of this algebra obtained with Remark~\ref{Rq:Primcoassgen}? Or can we generalize our ideas to other $(n,m)$-dendriform structures and maybe give a more general definition?

\subsection*{Acknowledgements}

I want to thank all members of the LMPA at Universit\'e du Littoral C\^ote d'Opale for welcoming me to prepare my PhD. I especially thank Lo\"{\i}c Foissy, my PhD advisor.
I thank the other PhD students of the LMPA for useful discussions which give me some ideas. I also want to thank the referees for the careful reading of this paper.
The author acknowledges support from the grant ANR-20-CE40-0007
\emph{Combinatoire Alg\'ebrique, R\'esurgence, Probabilit\'es Libres et Op\'erades}.

\pdfbookmark[1]{References}{ref}
\LastPageEnding

\end{document}